\newtheorem{theorem}{Theorem}[section]
\newtheorem{corollary}[theorem]{Corollary}
\newtheorem{proposition}[theorem]{Proposition}
\newtheorem{lemma}[theorem]{Lemma}
\newtheorem{claim}[theorem]{Claim}
\newtheorem{question}[theorem]{Question}
\newtheorem{conjecture}[theorem]{Conjecture}
\newtheorem{observation}[theorem]{Observation}
\theoremstyle{definition}
\newtheorem{definition}{Definition}
\title{}
\author{}
\date{}
\newcommand{\pow}{q} 
\newcommand{\Pow}{{q_\star}} 
\newcommand{\re}{r} 
\newcommand{\ex}{\mathrm{ex}}
\renewcommand{\c}[1]{\mathcal{#1}}
\newcommand{\Om}{\Omega}
\newcommand{\Del}{\Delta}
\newcommand{\al}{\alpha}
\newcommand{\sm}{\setminus}
\newcommand{\sub}{\subseteq}
\newcommand{\E}{\mathbb{E}}
\newcommand{\mon}{\mathrm{mon}}
\newcommand{\Mon}{\mathrm{Mon}}
\title{Rational Exponents for General Graphs}
\author{Sean English\thanks{University of North Carolina Wilmington, \texttt{EnglishS@uncw.edu}.} \and Sam Spiro\thanks{Georgia State University, \texttt{sspiro@gsu.edu}.}}
\begin{document}
	
	\maketitle

	\begin{abstract}
		A rational number $r$ is a \textbf{realizable exponent} for a graph $H$ if there exists a finite family of graphs $\mathcal{F}$ such that $\mathrm{ex}(n,H,\mathcal{F})=\Theta(n^r)$, where $\mathrm{ex}(n,H,\mathcal{F})$ denotes the maximum number of copies of $H$ that an $n$-vertex $\mathcal{F}$-free graph can have. Results for realizable exponents are currently known only when $H$ is either a star or a clique, with the full resolution of the $H=K_2$ case being a major breakthrough of Bukh and Conlon.  
		
		In this paper, we establish the first set of results for realizable exponents which hold for arbitrary graphs $H$ by showing that for any graph $H$ with maximum degree $\Delta \ge 1$, every rational in the interval $\left[v(H)-\frac{e(H)}{2\Delta^2},\ v(H)\right]$ is realizable for $H$. We also prove a ``stability'' result for generalized Tur\'an numbers of trees which implies that if $T\ne K_2$ is a tree with $\ell$ leaves, then $T$ has no realizable exponents in $[0,\ell]\setminus \mathbb{Z}$.  Our proof of this latter result uses a new variant of the classical Helly theorem for trees, which may be of independent interest.
	\end{abstract}

	\section{Introduction}
	Given a family of graphs $\c{F}$, the \textbf{Tur\'an number} $\ex(n,\c{F})$ is defined to be the maximum number of an edges that an $n$-vertex $\c{F}$-free graph can have.  When $\c{F}$ consists of a single graph $F$ we denote its Tur\'an number simply by $\ex(n,F)$.  As stated $\ex(n,\c{F})$ is not well-defined if $\c{F}$ contains an empty graph on at most $n$ vertices; in such cases we adopt the convention that $\ex(n,\c{F})=0$.
	
	The behavior of $\ex(n,\c{F})$ is well-understood whenever $\c{F}$ contains only non-bipartite graphs due to the famed Erd\H{o}s-Stone-Simonovits Theorem~\cite{ES1946}, but the behavior of $\ex(n,\c{F})$ when $\c{F}$ contains a bipartite graph is far from clear despite decades of active research into the problem, and we refer the reader to the survey~\cite{FS2013} for a more thorough summary of what is known in the area.
	
	In an attempt to better understand the possible behaviors of $\ex(n,\c{F})$, Erd\H{o}s and Simonovits made a conjecture, now commonly known as the rational exponents conjecture, which states that for every rational $\re\in [1,2]$ there exists a graph $F$ such that $\ex(n,F)=\Theta(n^\re)$.  A positive answer to a slight weakening of this conjecture was given  in breakthrough work of Bukh and Conlon~\cite{BC2018} where the following was proven.
	
	\begin{theorem}[\cite{BC2018}]\label{theorem bukh conlon}
		For every rational $\re\in [1,2]$ there exists a finite family of graphs $\c{F}$ such that $\ex(n,\c{F})=\Theta(n^r)$.
	\end{theorem}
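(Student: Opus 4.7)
The plan is to follow the breakthrough strategy of forbidding ``rooted'' tree structures. Given a tree $T$ with a designated set of roots $R \subseteq V(T)$, define the density
$$\rho(T,R) = \frac{e(T)}{v(T)-|R|},$$
and call the rooted tree $(T,R)$ \emph{balanced} if every subtree $T' \subseteq T$ containing $R$ satisfies $\rho(T',R) \le \rho(T,R)$. The intermediate theorem I would aim for is: for every balanced rooted tree $(T,R)$ with density $\rho \ge 1$, there is a finite family $\c{F}_T$ with $\ex(n,\c{F}_T) = \Theta(n^{2-1/\rho})$. Since the map $\rho \mapsto 2-1/\rho$ is a bijection from $\mathbb{Q}\cap[1,\infty)$ onto $\mathbb{Q}\cap[1,2)$, and the endpoint $\re=2$ is trivial (take $\c{F}=\{K_3\}$), this suffices.

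The first step is a gadget construction: for every rational $\rho \ge 1$, build a balanced rooted tree $(T,R)$ with density exactly $\rho$. One can combine ``building block'' trees (such as stars rooted at a leaf, or paths rooted at an endpoint) by gluing along roots, with the combined density being a weighted average of the densities of the pieces; a careful inductive construction then realizes every rational density while preserving balancedness.

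For the upper bound, I would set $\c{F}_T$ to consist of all graphs obtained by identifying the root sets of $K$ vertex-disjoint copies of $T$, for a large constant $K$. If $G$ is $\c{F}_T$-free with too many edges, a supersaturation count of rooted copies of $T$ in $G$---using balancedness to control how the counts propagate as one extends partial rooted subtrees---forces some root set in $G$ to have at least $K$ distinct extensions to a copy of $T$, yielding a forbidden configuration.

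The main obstacle is the matching lower bound. For this I would use a random algebraic construction: take a suitable random system of polynomials over $\mathbb{F}_q$ to define a partite graph whose expected edge count is $\Theta(n^{2-1/\rho})$, and use Lang--Weil-type variety estimates to control the expected number of rooted $T$-copies sharing a common root set. The algebraic rigidity ensures that common neighborhoods of root tuples take essentially only two sizes (roughly $0$ or $q^c$), which prevents the exponential concentration of forbidden configurations that would afflict a purely random graph. An alteration step then removes the few remaining copies of graphs in $\c{F}_T$ and completes the lower bound.
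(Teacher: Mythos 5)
This theorem is cited from Bukh and Conlon~\cite{BC2018} rather than reproved in the paper, and your outline is a faithful high-level reconstruction of their argument. The two main ingredients you identify --- balanced rooted trees of arbitrary rational density $\ge 1$ and the random polynomial (Lang--Weil) construction for the lower bound --- are exactly the ones the paper imports in its preliminaries as \Cref{lem:BukhConlon} and \Cref{theorem random polynomial lower bounds}.
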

	
	\Cref{theorem bukh conlon} is a highly influential result, with it serving as the catalyst for a large body of literature dedicated towards proving special cases of the rational exponents conjecture~\cite{E1981}.  It is also worth mentioning that \Cref{theorem bukh conlon} was proven using the powerful random polynomial method introduced  by Bukh~\cite{B2015}, with this method having since been further developed to solve a large number of problems in combinatorics, see e.g. \cite{BKN2022, CPZ2021, CT2021, F2019, KLZ2024, ST2024}.
	
	We aim to study a generalization of the rational exponents conjecture to the generalized Tur\'an setting, a problem which was initiated in recent work of English, Halfpap, and Krueger~\cite{EHK2025}.  To this end, given a graph $H$ and a family of graphs $\c{F}$, we define the \textbf{generalized Tur\'an number} $\ex(n,H,\c{F})$ to be the maximum number of copies of $H$ that can appear in an $n$-vertex $\c{F}$-free graph.  The systematic study of generalized Tur\'an numbers was initiated by Alon and Shikhelman~\cite{AS2016}, and since then there has been a large number of results dedicated to the topic, see e.g. \cite{GGMV2020, GP2022, HP2021, MYZ2018, ZGHLSX2023}.  For more on the history of generalized Tur\'an numbers we refer the interested reader to the excellent recent survey by Gerbner and Palmer~\cite{GP2025}.  
	
	Motivated by \Cref{theorem bukh conlon}, we consider the following.
	
	\begin{definition}
		We say that a rational number $\re\in [0,v(H)]$ is a \textbf{realizable exponent} (or simply \textbf{realizable}) for a graph $H$ if there exists a finite family of graphs $\c{F}$ satisfying $\ex(n,H,\c{F})=\Theta(n^\re)$.  We say that $\re$ is \textbf{non-realizable} for $H$ if it is not a realizable exponent.
	\end{definition}
	
	As we will see later on, every graph $H$ without isolated vertices has $0$ as a realizable exponent and no rational in $(0,1)$ being realizable, so the central problem will be in studying the range $\re\in [1,v(H)]$.
	
	Note that $\ex(n,K_2,\c{F})=\ex(n,\c{F})$, so \Cref{theorem bukh conlon} is equivalent to saying that every rational in $[1,2]$ is realizable for $H=K_2$, showing that every rational which could be realizable for $K_2$ is realizable.  It is natural to ask to what extent this holds for other $H$, and in particular what happens for families of $H$ which contain $K_2$ as a member.  For example, one might ask what happens when $H$ is a clique, with this case being solved in full by  English, Halfpap, and Krueger~\cite{EHK2025}.
	
	\begin{theorem}[\cite{EHK2025}]\label{theorem:cliques}
		For all $t\ge 2$, every rational in $[1,t]$ is realizable for $H=K_t$.
	\end{theorem}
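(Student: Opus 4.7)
Given a rational $r \in [1, t]$, the plan is to realize $r$ via a join-type construction that combines a Bukh--Conlon graph (for the ``fractional'' part) with a Tur\'an-type structure (for the ``integer'' part), then prove a matching upper bound by decomposing each $K_t$ into a $K_{t-2}$ together with an edge in its common neighborhood. Concretely, write $r = k + s$ where $s \in [1, 2]$ is rational and $k \in \{0, 1, \ldots, t-2\}$ is an integer: for $r \in [j, j+1]$ with $j \in \{1, \ldots, t-1\}$ one may take $k = j-1$ and $s = r - j + 1$.

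\textbf{Lower-bound construction.} Invoke \Cref{theorem bukh conlon} to obtain a family $\c{F}_s$ and a bipartite extremal graph $B$ on $n$ vertices with $e(B) = \Theta(n^s)$. Let $T$ be the complete $(t-2)$-partite graph whose $k$ parts each have size $n$ and whose remaining $t-2-k$ parts are singletons, and set $G = T \vee B$ (the graph join), on $\Theta(n)$ vertices. Since $T$ is $K_{t-1}$-free and $B$ is triangle-free, each copy of $K_t$ in $G$ consists of a $K_{t-2}$ in $T$ (one vertex per part of $T$, giving $\Theta(n^k)$ choices) together with an edge of $B$ in its common neighborhood (every edge of $B$ qualifies). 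Hence
\[
N(K_t, G) \;=\; \Theta(n^k) \cdot \Theta(n^s) \;=\; \Theta(n^r).
\]

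\textbf{Forbidden family and upper bound.} Take
\[
\c{F} \;=\; \{K_{t-2} \vee F : F \in \c{F}_s\} \;\cup\; \c{H}_k,
\]
where $\c{H}_k$ is an auxiliary family---built by strong induction on $t$---that enforces $\ex(N, K_{t-2}, \c{H}_k) = O(N^k)$ while still permitting $G = T \vee B$ as a subgraph. For any $\c{F}$-free graph $H$ on $N$ vertices, the $\{K_{t-2} \vee F\}$-freeness ensures that the common neighborhood of every $K_{t-2}$ in $H$ is $\c{F}_s$-free and thus has at most $O(N^s)$ edges. Decomposing each copy of $K_t$ into a $K_{t-2}$ plus an edge in its common neighborhood,
\[
\binom{t}{2} N(K_t, H) \;=\; \sum_{K_{t-2} \subseteq H} e\bigl(H[N(K_{t-2})]\bigr) \;\le\; N(K_{t-2}, H) \cdot O(N^s) \;=\; O(N^{k+s}) \;=\; O(N^r).
\]

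\textbf{Main obstacle.} The crux is the inductive construction of $\c{H}_k$: it must simultaneously cap $N(K_{t-2}, H)$ at $O(N^k)$ and be compatible with the construction $T \vee B$. This requires a strong induction in which the inductive hypothesis is phrased so that the extremal graph for $K_{t-2}$ with exponent $k$ contains (or is) a complete multipartite graph of the right shape, which is how we plan to realize $\c{H}_k$. Handling the boundary $k = 0$ is particularly subtle because $K_{t-2}$ may be trivial for small $t$ and the count $N(K_{t-2}, H)$ cannot be forced to be $O(1)$; here a direct argument, using either a sparser choice of ``apex'' structure or bounding $K_t$ count via an edge-decomposition $N(K_t) = \binom{t}{2}^{-1} \sum_{e} N(K_{t-2}, H[N(e)])$ with a $C_4$-type constraint, is required. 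Finally, one must verify that the union $\{K_{t-2} \vee F : F \in \c{F}_s\} \cup \c{H}_k$ does not accidentally exclude $G$, which amounts to checking that no $K_{t-2}$ in $G$ has an $\c{F}_s$-copy in its common neighborhood beyond what $B$ permits.
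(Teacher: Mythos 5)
The paper does not reprove this statement—it cites~\cite{EHK2025}, where it is established by the random-polynomial/rooted-powers framework that \Cref{theorem d large tree implies dadmissible} generalizes. Your join-based proposal is a genuinely different route, but it has a fatal flaw in the lower-bound construction that you only partially flag. Suppose $k\ge 1$, so some part $P$ of $T$ has size $n$, and pick $v\in B$ with $\deg_B(v)=\Omega(n^{s-1})$ (which exists by averaging). One vertex from each part of $T$ other than $P$, together with $v$, forms a $K_{t-2}$ in $G=T\vee B$ whose common neighborhood contains $P\cup N_B(v)$; since $P$ is independent in $T$, $N_B(v)$ is independent (as $B$ is bipartite), and the join makes $P$ complete to $B$, this set induces $K_{n,\,\Omega(n^{s-1})}$. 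Every graph in the Bukh--Conlon family $\mathcal{F}_s$ is a rooted power of a tree and hence bipartite, so it embeds in any sufficiently large complete bipartite graph. Thus for $s>1$ and $n$ large, $G$ contains $K_{t-2}\vee F$ for some $F\in\mathcal{F}_s$: the very forbidden family you introduce for the upper bound already excludes your extremal graph, and the construction cannot realize any $r>2$.

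Separately, the upper bound rests entirely on an unconstructed family $\mathcal{H}_k$ enforcing $\ex(N,K_{t-2},\mathcal{H}_k)=O(N^k)$ while still admitting $T\vee B$; you flag this as ``the crux'' but supply no candidate, and in any case no choice of $\mathcal{H}_k$ can rescue the issue above, since $T\vee B$ already violates $\{K_{t-2}\vee F:F\in\mathcal{F}_s\}$-freeness. The argument of~\cite{EHK2025} avoids this tension at the root: a single random-polynomial graph supplies the lower bound (as in \Cref{theorem random polynomial lower bounds}), with its $\mathcal{F}$-freeness and $K_t$-count certified simultaneously against a balanced rooted ``clique-tree,'' and the matching upper bound is a counting/pigeonhole argument over that rooted structure rather than a two-piece join decomposition.
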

	
	Thus as we saw for $K_2$, every rational which could be realizable for cliques is  realizable. In view of this, one might be led to guess that $[1,v(H)]$ should be realizable for every graphs.  Perhaps surprisingly, this phenomenon does not in fact hold, and it in particular fails for all stars larger than $K_2$.
	
	\begin{theorem}\label{theorem:stars}
		For all $t\ge 2$, every rational in $[t-1,t]$ is realizable for $H=K_{1,t-1}$, but no rational in $(1,t-1)$ is realizable.
	\end{theorem}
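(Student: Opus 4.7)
The plan is to handle the two halves of the theorem separately, both leveraging the identity that the number of copies of $K_{1, t-1}$ in a graph $G$ equals $\sum_{v \in V(G)} \binom{d(v)}{t-1}$.

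For the non-realizability of $(1, t-1)$, I would set up a dichotomy on the matching numbers of the members of $\c{F}$. If some $F \in \c{F}$ has matching number $\nu(F) \le 1$ (so that $F$ is a star together with isolated vertices), then for $n \ge v(F)$ every $\c{F}$-free graph $G$ must have $\Delta(G) \le \Delta(F) - 1$, since otherwise a vertex of $G$ of degree $\ge \Delta(F)$ together with any $v(F) - \Delta(F) - 1$ further vertices realizes a copy of $F$. This forces $\ex(n, K_{1, t-1}, \c{F}) \le n \binom{\Delta(F) - 1}{t-1} = O(n)$. Otherwise every $F \in \c{F}$ has $\nu(F) \ge 2$, and then $K_{1, n-1}$ (whose matching number is $1$) is $\c{F}$-free, giving $\ex(n, K_{1, t-1}, \c{F}) \ge \binom{n-1}{t-1} = \Omega(n^{t-1})$. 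In either case $\ex(n, K_{1, t-1}, \c{F})$ lies in $O(n) \cup \Omega(n^{t-1})$, ruling out every rational in $(1, t-1)$.

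For realizability of $[t-1, t]$, the endpoints are immediate: $r = t-1$ is realized by $\c{F} = \{2K_2\}$ (whose $\c{F}$-free graphs are stars plus isolated vertices, with extremum $K_{1, n-1}$ giving $\binom{n-1}{t-1} = \Theta(n^{t-1})$ stars), and $r = t$ is realized by $\c{F} = \{K_3\}$ (with extremum $K_{\lfloor n/2 \rfloor, \lceil n/2 \rceil}$ giving $\Theta(n^t)$ stars). For intermediate $r \in (t-1, t)$, I would set $s = r - (t-1) \in (0, 1)$ and invoke \Cref{theorem bukh conlon} to obtain a finite family $\c{F}'$ with $\ex(n, \c{F}') = \Theta(n^{1+s})$. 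The upper bound $\ex(n, K_{1, t-1}, \c{F}') = O(n^r)$ would then follow from the elementary estimate $\sum_v \binom{d(v)}{t-1} \le \Delta^{t-2} \cdot 2m / (t-1)! = O(n^{t-2} \cdot n^{1+s}) = O(n^r)$, valid for any graph on $n$ vertices with $m$ edges and $\Delta \le n$. For the matching lower bound one needs an $\c{F}'$-free graph with $\Omega(n^r)$ copies of $K_{1, t-1}$; a natural target is a bipartite ``star blow-up'' like $K_{\lceil n^s \rceil,\, n - \lceil n^s \rceil}$, which has $\Theta(n^{1+s})$ edges and whose small side contributes $\Theta(n^{t-1+s}) = \Theta(n^r)$ copies of $K_{1, t-1}$.

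The chief obstacle is producing such a lower-bound construction for intermediate $r$: a complete bipartite graph like $K_{\lceil n^s \rceil,\, n - \lceil n^s \rceil}$ contains every fixed bipartite subgraph, whereas Bukh-Conlon's family $\c{F}'$ typically includes bipartite forbidden graphs (balanced trees, theta graphs, and the like), so this natural candidate is not $\c{F}'$-free as given. Overcoming this requires either a careful choice of $\c{F}'$ within the flexibility allowed by \Cref{theorem bukh conlon} so that no $F \in \c{F}'$ embeds into a complete bipartite graph with one part of size $\Theta(n^s)$, or replacing the complete bipartite with a suitable random-like bipartite substitute that is $\c{F}'$-free while preserving the required edge density and degree concentration on a small side of size $\Theta(n^s)$. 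I expect the construction of this extremal graph and the verification of its $\c{F}'$-freeness to constitute the bulk of the proof.
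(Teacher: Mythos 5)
Your proposal has a genuine gap in the realizability half, and a small but real bug in the non-realizability half.  Before going into them, note that the paper never proves \Cref{theorem:stars} directly: the non-realizability of $(1,t-1)$ is attributed to~\cite{EHK2025} and recovered as a special case of \Cref{corollary:keyObservation}, while the realizability of $[t-1,t]$ is the $T=K_{1,t-1}$ case of \Cref{theorem:maxDegreeTrees}.  So you should compare your plan against those.

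For the non-realizability direction, the dichotomy you set up is in spirit the paper's.  However, the claim ``$\nu(F)\le 1$ so that $F$ is a star together with isolated vertices'' is false: $K_3$ (plus isolated vertices) also has matching number $1$, and for $F=K_3$ the deduction $\Delta(G)\le\Delta(F)-1$ for $\c{F}$-free $G$ is plainly wrong, as $K_{\lfloor n/2\rfloor,\lceil n/2\rceil}$ is triangle-free with linear maximum degree.  The correct split — the one the paper uses — conditions on whether some $F\in\c{F}$ is a \emph{subgraph of a star} (equivalently, a forest with $\nu\le 1$).  If yes, every $\c{F}$-free graph has bounded maximum degree and $\ex(n,K_{1,t-1},\c{F})=O(n)$; if no, then $K_{1,n-1}$ is $\c{F}$-free for all large $n$, giving $\Omega(n^{t-1})$.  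Your argument is fine once the case split is stated this way; as written it draws the wrong conclusion whenever $\c{F}$ contains a triangle plus isolated vertices and nothing star-shaped.

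For the realizability direction, the gap you identify is not patchable along the lines you suggest, and the paper's route is genuinely different.  Your proposal fixes a family $\c{F}'$ with $\ex(n,\c{F}')=\Theta(n^{1+s})$ (with $s=r-(t-1)$) and tries to find an $\c{F}'$-free host with $\Theta(n^s)$ vertices of degree $\Theta(n)$.  But any graph with $\Theta(n^s)$ vertices of degree $\Theta(n)$ mapping into a common vertex set of size $O(n)$ must, by convexity of $\binom{\cdot}{2}$, contain a pair of such vertices with $\Omega(n)$ common neighbours, hence a $K_{2,\Omega(n)}$; worse, any fixed bipartite graph embeds into this structure for $n$ large.  Since the Bukh--Conlon family for a subquadratic exponent necessarily contains a bipartite member, neither ``choosing $\c{F}'$ more carefully'' nor ``randomising the complete bipartite graph'' escapes this obstruction while keeping the small side at degree $\Theta(n)$.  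The paper avoids this entirely by choosing a different density: it uses $d=\frac{t-1}{t-r}$ so that the random polynomial construction of \Cref{theorem random polynomial lower bounds} yields a near-regular $\c{F}$-free graph of degree $\Theta(n^{1-1/d})=\Theta(n^{(r-1)/(t-1)})$ on $n$ vertices, which already contains $\Theta(n\cdot n^{(r-1)})=\Theta(n^r)$ copies of $K_{1,t-1}$ — no skewed degree sequence needed.  This family is denser than yours: $\ex(n,\c{F})=\Theta(n^{(t+r-2)/(t-1)})>\Theta(n^{1+s})$ when $t>2$ and $r<t$, and with that family your elementary bound $\sum_v\binom{d(v)}{t-1}\le\Delta^{t-2}\cdot 2m/(t-1)!$ does \emph{not} give $O(n^r)$ when one only knows $\Delta\le n$.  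The paper instead obtains the matching upper bound $\ex(n,K_{1,t-1},\c{F})=O(n^r)$ via the $d$-admissibility machinery of \Cref{theorem d large tree implies dadmissible} (applied through \Cref{lemma cut edge dadmissible}, as every edge of a star is a cut edge).  So your upper bound only works with a family that seems too sparse to admit the required lower-bound host, while the paper's family is denser but needs the heavier upper-bound argument; the two cannot simply be mixed.
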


	\section{Main Results}
	In this paper, we study realizable exponents for arbitrary graphs, proving several general results about both realizable and non-realizable exponents, with us significantly improving upon these results whenever $H$ is a forest.  Moreover, our tools used to give these improvements for forests can be used to give a number of other results on generalized Tur\'an numbers of trees which are of independent interest.
	
	\subsection{Realizable Exponents} 
	
	Our first main result shows that for \textit{any} graph $H$, every sufficiently large rational is realizable for $H$.
	\begin{theorem}\label{theorem:maxDegreeGeneral}
		If $H$ is a graph with maximum degree $\Delta\ge 1$, then every rational number in the interval 
		\[\left[v(H)-\frac{e(H)}{2\Delta^2},\ v(H)\right]\]
		is realizable for $H$.
	\end{theorem}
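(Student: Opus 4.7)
The plan is to apply \Cref{theorem bukh conlon} combined with a supplementary family of forbidden subgraphs chosen to control the local density structure. Given a rational $\re$ in the target interval, I would set $s := 2 + (\re - v(H))/e(H)$; one checks that $s$ is rational and lies in $[2 - 1/(2\Del^2),\, 2] \subseteq [1,2]$, since $\Del \geq 1$ forces $1/(2\Del^2) \leq 1/2$. The parameter $s$ is calibrated so that a pseudorandom graph on $n$ vertices with $\Theta(n^s)$ edges (density $p = n^{s-2}$) should contain $\Theta(n^{v(H)} p^{e(H)}) = \Theta(n^\re)$ copies of $H$, matching the target exponent.

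Applying \Cref{theorem bukh conlon} at exponent $s$ yields a finite family $\c{F}_0$ with $\ex(n, \c{F}_0) = \Theta(n^s)$, realized by an extremal graph coming from a random algebraic variety over a finite field. From this I would build a host $G_0$ on $n$ vertices that is $\c{F}_0$-free, has $\Theta(n^s)$ edges, and contains $\Theta(n^\re)$ copies of $H$ by virtue of the strong pseudorandomness of the algebraic construction; for non-bipartite $H$, where the bipartite Bukh-Conlon graph carries no copies of $H$, this step would require combining the construction with an auxiliary non-bipartite gadget via a tensor product or by adding a controlled set of edges within a single part. The forbidden family is then $\c{F} := \c{F}_0 \cup \c{F}'$, where $\c{F}'$ is a finite collection of ``thick'' configurations (such as blow-ups of $H$ or complete bipartite graphs $K_{a,b}$) chosen both to remain absent from $G_0$ (preserving the lower bound $\ex(n, H, \c{F}) \geq \Omega(n^\re)$) and to prevent any $\c{F}$-free graph from supporting more than $O(n^\re)$ copies of $H$.

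The main obstacle is the upper bound: proving that every $\c{F}$-free graph contains at most $O(n^\re)$ copies of $H$. The factor $\Del^2$ in the denominator of $e(H)/(2\Del^2)$ enters here through a max-degree-based counting argument: since each vertex of $H$ has at most $\Del$ neighbors, its $2$-neighborhood has at most $\Del^2 + 1$ vertices, and a double counting over the edges of $H$ combined with a Cauchy-Schwarz step incurs a $\Del^2$-dependent loss that precisely produces the stated range. I expect the technical heart of the argument to be the design of $\c{F}'$, which must simultaneously (i) be avoided by the Bukh-Conlon host so the lower bound survives, and (ii) be rich enough to preclude dense concentrations of $H$-copies in any $\c{F}$-free graph. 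Balancing these two requirements is precisely what determines the quantitative cutoff $e(H)/(2\Del^2)$ and what prevents extending the realizable range further using this approach.
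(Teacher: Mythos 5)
Your proposal has the right flavor — random polynomial constructions for the lower bound, and additional forbidden subgraphs for the upper bound — but the concrete plan has several genuine gaps, and in fact the key technical tools you'd need are missing.

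First, the lower bound. You propose to take the extremal graph $G_0$ for \Cref{theorem bukh conlon} at a calibrated exponent $s$ and argue it contains $\Theta(n^{v(H)}p^{e(H)})=\Theta(n^\re)$ copies of $H$ by ``strong pseudorandomness.'' This does not follow: the random algebraic graphs constructed by Bukh and Conlon are \emph{bipartite}, so they contain zero copies of any non-bipartite $H$, and your suggestion to fix this via a tensor product or by ``adding a controlled set of edges within a single part'' is exactly where a real proof would be required. It is also not clear that the bipartite construction even contains $\Theta(n^\re)$ copies of a bipartite $H$ — the random algebraic variety is engineered to avoid rooted powers, not to exhibit count-level quasirandomness for arbitrary subgraphs. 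The paper sidesteps this entirely by invoking a \emph{stronger} lower bound (stated as \Cref{theorem random polynomial lower bounds}, due to \cite{S24}), which directly asserts that the random polynomial graph avoiding rooted powers of density $\geq d$ contains $\Omega(n^{v(H)-e(H)/d})$ copies of $H$. Your proposal uses the weaker edge-counting version of Bukh--Conlon and then needs to rederive this count, which is the hard part.

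Second, the upper bound, which you rightly identify as the main obstacle, is not addressed: ``blow-ups of $H$ or complete bipartite graphs $K_{a,b}$'' is not the right family, and no argument is sketched for why forbidding it would cap the $H$-count at $O(n^\re)$. The paper's family consists of $\pow$-th powers of carefully constructed \emph{rooted $H$-trees} — unions of copies of $H$ glued along induced subgraphs, made balanced with density exactly $d$ — together with auxiliary families for proper induced subgraphs of $H$. The upper bound is then proved via a delicate pigeonhole through ``strong builders'' (\Cref{lemma weak builder}, \Cref{lemma strong builder}, \Cref{lemma counting trees in a strong builder}) establishing that a graph with too many $H$-copies must contain a rooted power of the $H$-tree. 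Nothing like this machinery appears in your proposal.

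Third, your explanation of where $\Delta^2$ comes from — a Cauchy--Schwarz over $2$-neighborhoods — does not match how the bound actually arises. In the paper the threshold $d\ge 2k^2$ (which becomes $2\Delta^2$ after taking $k=\Delta$) emerges in \Cref{lemma arbitrary edge admissible}: when building the balanced $H$-tree $F$ so that the unrooted path vertices match the degree sequence of the Bukh--Conlon tree, one needs the crude starting estimate $\deg_{F'}(u)\le 2k(\max\{\deg_H(\hat x),\deg_H(\hat y)\}-1)+2\le 2k^2\le \deg_T(g(u))$ to have room to correct degrees afterward. The factor is a combinatorial feature of the $H$-tree construction, not a moment-method loss. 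Overall the proposal sketches a strategy but omits precisely the definitions and lemmas (rooted $H$-trees, $d$-admissibility, the builder framework, and the generalized random polynomial lower bound) that make the argument work.
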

	
	A more careful analysis of our methods can be used to establish an even stronger result for trees.
	\begin{theorem}\label{theorem:maxDegreeTrees}
		If $T$ is a tree with maximum degree $\Delta\ge 1$, then every rational number in the interval 
		\[\left[v(T)-\frac{e(T)}{\Delta},\ v(T)\right]\]
		is realizable for $T$.
	\end{theorem}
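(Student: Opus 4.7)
Given a rational $r\in [v(T)-e(T)/\Del,\,v(T)]$, set $\al := (v(T)-r)/e(T)$; then $\al$ is rational with $\al\in[0,1/\Del]$, and $2-\al\in[1,2]$. The plan is to invoke Bukh--Conlon (Theorem~\ref{theorem bukh conlon}) at edge exponent $2-\al$, obtaining a finite family $\c{F}_0$ and a sequence $\{G_n\}$ (along a suitable subsequence of $n$'s) of $\c{F}_0$-free graphs on $n$ vertices with $\Theta(n^{2-\al})$ edges. A crucial extra property of the Bukh--Conlon construction is that $G_n$ is nearly regular, with every vertex of degree $\Theta(n^{1-\al})$. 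The lower bound on $\ex(n,T,\c{F}_0)$ is then obtained by counting $T$-copies in $G_n$: fix an ordering $v_1,\dots,v_{v(T)}$ of $V(T)$ in which every $v_i$ with $i\ge 2$ has a unique parent $v_{p(i)}$ with $p(i)<i$, and embed $T$ one vertex at a time, placing $v_1$ in any of $n$ vertices and each subsequent $v_i$ in the neighborhood of the parent's image. Near-regularity provides $\Theta(n^{1-\al})$ choices at each step, producing $\Theta(n\cdot (n^{1-\al})^{e(T)})=\Theta(n^{v(T)-\al e(T)})=\Theta(n^r)$ embeddings, using $e(T)=v(T)-1$.

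For the matching upper bound, take $\c{F}:=\c{F}_0\cup\c{F}_1$, where $\c{F}_1$ is a small collection of auxiliary forbidden graphs engineered so that every $\c{F}$-free graph $H$ satisfies a max-degree bound of the form $\Del(H)=O(n^{1-\al})$. Given such a bound, the standard tree-embedding inequality --- obtained by selecting an oriented edge of $H$ to play the role of a fixed edge of $T$ and extending the embedding one vertex at a time --- shows that the number of copies of $T$ in $H$ is at most $C_T \cdot e(H) \cdot \Del(H)^{e(T)-1}$. Combined with the bound $e(H)=O(n^{2-\al})$ coming from $\c{F}_0$-freeness, this yields the desired $O(n^r)$ bound on $\ex(n,T,\c{F})$, since $(2-\al)+(1-\al)(e(T)-1)=v(T)-\al e(T)=r$.

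The main technical obstacle is the construction of $\c{F}_1$: the auxiliary graphs must be absent from the near-regular $G_n$ (to preserve the lower bound) yet be forced to appear in any graph with $\Theta(n^{2-\al})$ edges that contains a vertex of degree $\omega(n^{1-\al})$ (thereby enforcing the needed max-degree bound on arbitrary $\c{F}$-free graphs). Natural candidates include specific ``books'' or blow-ups of rooted trees, whose non-presence in $G_n$ can be verified through the random-polynomial structure underlying Bukh--Conlon's construction. The constraint $\al\le 1/\Del$ is precisely what aligns the exponent in the tree-embedding bound with the target $r$; past this threshold the required max-degree control becomes incompatible with maintaining $\Theta(n^{2-\al})$ edges via a finite forbidden family, which is why the claimed interval terminates at $v(T)-e(T)/\Del$.
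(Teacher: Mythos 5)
Your lower bound is in the right spirit and matches the paper's use of the random polynomial method (Theorem~\ref{theorem random polynomial lower bounds}), but the upper bound contains a genuine gap that I do not think can be patched as written.

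The gap is the assertion that one can choose a finite auxiliary family $\c{F}_1$ so that every $\c{F}$-free graph $H$ satisfies $\Delta(H)=O(n^{1-\alpha})$ while keeping the Bukh--Conlon extremal graphs $G_n$ $\c{F}_1$-free. A finite forbidden family simply cannot impose a degree bound that grows with $n$. Concretely: take the graph $J_n:=K_{1,m}\sqcup G_n$ on roughly $n$ vertices, where $m:=\lceil n^{1-\alpha/2}\rceil$. Then $e(J_n)=\Theta(n^{2-\alpha})$ and $\Delta(J_n)=\Theta(n^{1-\alpha/2})=\omega(n^{1-\alpha})$, so if your degree bound were enforceable, $J_n$ would have to contain some $F\in\c{F}$. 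But $J_n$ is $\c{F}_0$-free (the star is acyclic while powers of rooted trees contain cycles, and $G_n$ is $\c{F}_0$-free by construction), so some $F\in\c{F}_1$ must appear in $J_n$. Any such $F$ decomposes as $F=K_{1,t}\sqcup F_2$ with $F_2\subseteq G_n$; but then, since $G_n$ has max degree $\Theta(n^{1-\alpha})\gg t$ for large $n$ and contains $F_2$ on a bounded vertex set, $G_n$ also contains a copy of $K_{1,t}$ disjoint from a copy of $F_2$, i.e. $G_n$ contains $F$ --- contradicting the requirement that $G_n$ be $\c{F}$-free. (Here one uses that adding isolated vertices to a forbidden graph is inert once $n$ exceeds $|V(F)|$.) So the family $\c{F}_1$ you need does not exist, and the upper bound argument does not go through.

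This is precisely why the paper does not attempt a degree bound. Instead, its upper bound is structural: it introduces $d$-admissibility, constructs a balanced rooted $H$-tree $(F,R)$ (\Cref{lemma cut edge dadmissible}, exploiting that every edge of a tree is a cut-edge), and proves \Cref{theorem d large tree implies dadmissible} via a pigeonhole argument built around ``strong builders'': if a $\c{F}$-free graph has too many copies of $T$, one finds many copies of $F$ agreeing on $R$ while having few copies of $F[R]$, forcing a forbidden rooted power $(F,R)^{\pow}$. The forbidden family is therefore a carefully chosen set of rooted-power graphs (with powers of $H$-trees, not just powers of Bukh--Conlon trees), and the argument never needs or asserts any pointwise degree control. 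If you want to salvage your route, you would need to replace the max-degree bound by a direct proof that any $\c{F}_0$-free graph already has $O(n^r)$ copies of $T$; but that is essentially the content of \Cref{theorem d large tree implies dadmissible} and requires the builder machinery (a naive bound like $e(H)\Delta(H)^{e(T)-1}$ is far too lossy when degrees are unbalanced).
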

	The length of the interval in \Cref{theorem:maxDegreeTrees} is best possible for trees in general, as \Cref{theorem:stars} shows that one can not improve this for stars.


	\subsection{Non-Realizable Exponents}  
	
	Our main contribution to the study of non-realizable exponents is the establishment of a technical result, \Cref{proposition:keyObservation}, which, at present, is the only tool we know of that can prove the non-existence of realizable exponents.  We postpone stating this result due to its technical nature and instead focus on some of its consequences, starting with the following non-realizable exponents of arbitrary graphs.

	\begin{proposition}\label{proposition no 01 exponents}
		No rational in $(0,1)$ is realizable for any graph $H$.  Moreover, $0$ is realizable for every graph $H$ without isolated vertices, and $1$ is realizable for every connected graph $H$.
	\end{proposition}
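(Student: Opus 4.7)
The plan is to handle the three claims separately.

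For the first claim (no rational in $(0,1)$ is realizable), I would argue by contradiction using the Erd\H{o}s--Rado sunflower lemma. Assume $\ex(n,H,\c{F})=\Theta(n^r)$ for some $r\in(0,1)$; then $\ex(n,H,\c{F})\to\infty$. Since each $v(H)$-element vertex set supports at most $v(H)!/|\mathrm{Aut}(H)|$ copies of $H$, the number of distinct vertex sets among the $H$-copies in an extremal graph $G_n$ is $\Omega(\ex(n,H,\c{F}))$, which still tends to infinity. By the sunflower lemma, once this count exceeds $v(H)!(k-1)^{v(H)}$, we obtain a sunflower of $k$ vertex sets with common core $S$; the $k$ corresponding (distinct) copies of $H$ all sit inside the induced subgraph $G'$ of $G_n$ on the sunflower's union $U$, where $|U|\leq kv(H)$, and $G'$ is $\c{F}$-free as a subgraph of $G_n$. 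This yields $\ex(|U|,H,\c{F})\geq |U|/v(H)$ for arbitrarily large $|U|$, contradicting $\ex(n,H,\c{F})=O(n^r)$ with $r<1$.

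For the realizability of $r=0$ when $H$ has no isolated vertices, I would take $\c{F}$ to be the finite family of all (isomorphism classes of) graphs $J=H_1\cup H_2$ arising as the union of two distinct copies of $H$. A graph contains some $J\in\c{F}$ iff it contains two distinct copies of $H$, so $\c{F}$-free graphs contain at most one copy. Setting $G=H\sqcup(n-v(H))K_1$, the hypothesis that $H$ has no isolated vertices forces every embedded copy of $H$ in $G$ to land on the $v(H)$ non-isolated vertices and hence equal the obvious copy; thus $\ex(n,H,\c{F})=1=\Theta(n^0)$.

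For the realizability of $r=1$ when $H$ is connected, I would take $\c{F}$ to be the (finite) family of all $J=H_1\cup H_2$ where $H_1,H_2$ are distinct copies of $H$ sharing at least one vertex; connectedness of $H$ makes each such $J$ connected. A graph is $\c{F}$-free iff its copies of $H$ are pairwise vertex-disjoint, giving $\ex(n,H,\c{F})\leq \lfloor n/v(H)\rfloor$. For the matching lower bound, let $G$ consist of $\lfloor n/v(H)\rfloor$ vertex-disjoint copies of $H$ together with isolated vertices. Each $J\in\c{F}$ has either more than $v(H)$ vertices (when $V(H_1)\neq V(H_2)$) or strictly more than $e(H)$ edges (when $V(H_1)=V(H_2)$), so $J$ cannot embed in any component of $G$; hence $G$ is $\c{F}$-free, and connectedness of $H$ forces $G$ to have exactly $\lfloor n/v(H)\rfloor$ copies of $H$.

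The main obstacle is the first part: one must account for the fact that several copies of $H$ can share a vertex set (though only constantly many, so a sunflower of $k$ sets gives $k$ distinct copies) and verify that the sunflower construction produces $\c{F}$-free subgraphs with linearly many $H$-copies on arbitrarily large vertex counts, which is what beats the $O(n^r)$ upper bound for $r<1$.
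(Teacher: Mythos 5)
Your proof is correct, and while it proves the same dichotomy it takes a noticeably different route from the paper. For the $(0,1)$ claim, the paper routes through its general machinery: it establishes Proposition~\ref{proposition:keyObservation} (either $\ex(n,H,\c F)=\Omega(n^k)$ or every $\c F$-free graph is $\c{F}_{H,k}^q$-free) and then proves $\ex(n,H,\c{F}_{H,1}^q)=O(1)$ via the sunflower lemma (Lemma~\ref{lemma:sunflower}, Lemma~\ref{proposition FH1 Bounded}). You instead apply the sunflower lemma directly to the vertex sets of $H$-copies in an extremal graph, extract a bounded-vertex $\c F$-free subgraph with linearly many copies of $H$, and derive a contradiction with $\ex(n,H,\c F)=O(n^r)$, $r<1$. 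This avoids introducing the $\c{F}_{H,k}^q$ formalism, at the cost of being less reusable (the paper's Proposition~\ref{proposition:keyObservation} is then deployed again for the tree results); your argument is self-contained and arguably more elementary. One small thing worth noting is that your sunflower step implicitly uses that the core $S$ is a proper subset of each petal (which holds because the petals are distinct $v(H)$-sets), so the union $U$ indeed grows with $k$ --- you should make that explicit. For the endpoints $r=0$ and $r=1$, the paper uses the ``pendant'' family $\c F_H$ (graphs obtained by attaching a new degree-one vertex to $H$), while you use unions of two distinct copies of $H$, requiring them to overlap in the $r=1$ case. Both families are finite, and both arguments are routine once the family is chosen; your version has the mild aesthetic advantage of treating $r=0$ and $r=1$ with the same style of construction.
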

	The condition that $H$ is connected for $1$ to be realizable is necessary in general, with the simplest example being any $H$ an independent set of size at least 2.
	
	Observe by \Cref{theorem:cliques} that the only rationals which are non-realizable for cliques lie in $(0,1)$, so \Cref{proposition no 01 exponents} is best possible for arbitrary graphs.  On the other hand, the set of realizable exponents for stars detailed in \Cref{theorem:stars} suggests that we might be able to say more about non-realizable exponents of trees.  And indeed, in this setting we have the following.
	\begin{theorem}\label{theorem:leafNonrealizable}
		If $T\ne K_2$ is a tree with $\ell\ge 2$ leaves, then no rational in $[0,\ell]\setminus \mathbb{Z}$ is realizable for $T$.
	\end{theorem}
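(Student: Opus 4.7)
The plan is to argue by contradiction, combining the technical \Cref{proposition:keyObservation} with the new Helly-type theorem for trees promised in the abstract. Suppose some $r \in [0, \ell] \setminus \mathbb{Z}$ is realizable for $T$, witnessed by a finite family $\mathcal{F}$ with $\ex(n, T, \mathcal{F}) = \Theta(n^r)$, and fix extremal $\mathcal{F}$-free graphs $G_n$ achieving this bound. \Cref{proposition no 01 exponents} disposes of $r \in (0,1)$ at once, so I may assume $r \in (k, k+1)$ for some integer $k$ with $1 \le k \le \ell - 1$, and aim for a contradiction.

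The first step is to apply \Cref{proposition:keyObservation} to the sequence $G_n$. I expect this to produce a ``pivot'' vertex $v^\ast \in V(G_n)$ through which $\Omega(n^{r-1})$ copies of $T$ pass, together with quantitative control on the neighborhoods of $v^\ast$ at the scales forced by the non-integer exponent. Heuristically, a non-integer scaling cannot be accounted for by any integer-valued local count, so the key observation should let us concentrate this mismatch at a single vertex.

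Next, I would encode each copy of $T$ through $v^\ast$ by (a) the preimage of $v^\ast$ in $T$ and (b) the maximal subtree $S \subseteq T$ whose image lies in the ``low-degree'' part of $G_n$ near $v^\ast$. Since there are only boundedly many such encodings, pigeonholing produces a single pattern $(v_0, S)$ shared by $\Omega(n^{r-1})$ of these copies. The Helly-type theorem for trees would then be invoked to show that a family of subtrees of $T$ pairwise overlapping in the manner forced by their images sharing $v^\ast$ in $G_n$ must admit a common subtree of a prescribed shape. The count of copies compatible with this prescribed overlap is then either $O(n^k)$ or $\Omega(n^{k+1})$, contradicting the global $\Theta(n^r)$ count.

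The hard part is formulating the Helly variant quantitatively enough to match the structural output of \Cref{proposition:keyObservation}. The classical statement (pairwise-intersecting subtrees of a tree share a vertex) is much too weak for this purpose; one needs something strong enough to convert ``many copies of $T$ sharing a vertex in $G_n$'' into a common subtree of controlled size in $T$, so that the resulting integer exponent squeezes the assumed $r \in (k, k+1)$ out simultaneously from both sides. Keeping this reduction compatible with the $\mathcal{F}$-free constraint—in particular ruling out the possibility that the extracted substructure only exists because it would force a member of $\mathcal{F}$ to appear elsewhere in $G_n$—is the most delicate point, and is presumably why the authors describe their Helly variant as of ``independent interest.''
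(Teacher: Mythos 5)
Your proposal correctly identifies the two main ingredients by name (\Cref{proposition:keyObservation} and the Helly-type theorem), but it misreads both of them, and the missing structural idea is substantial enough that the argument as sketched could not be completed.

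The largest gap is your reading of \Cref{proposition:keyObservation}. It does not produce a ``pivot'' vertex or concentrate a non-integer exponent at a vertex. What it actually gives is a clean dichotomy: either $\ex(n,H,\c{F})=\Om_{H,k}(n^k)$, or there exists an integer $q$ such that every $\c{F}$-free graph is also $\c{F}_{H,k}^q$-free, hence $\ex(n,H,\c{F}) \le \ex(n,H,\c{F}_{H,k}^q)$. The whole engine of the paper's argument is this second alternative, which replaces the arbitrary family $\c{F}$ by the very structured family $\c{F}_{T,k}^q$ (unions of $q$ copies of $T$ agreeing on a set $R$ with $T-R$ having at least $k$ components). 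Your encoding-by-pivot scheme isn't what follows and wouldn't by itself squeeze out the exponent.

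Similarly, the Helly variant (\Cref{thm:Helly}) is about \emph{edges}, not a common subtree: a $k$-Helly collection of \emph{leaf-cuttable} subtrees can be pierced by at most $k-1$ edges. This is used to prove \Cref{theorem:treeBoundEG}: a $\c{F}_{T,k}^q$-free graph $G$ has at most $O(e(G)^{k-1})$ copies of $T$, because each copy can be indexed by the at most $k-1$ edges of $G$ hitting all of its ``highly-extendable'' subtrees, and a sunflower/pigeonhole argument (\Cref{lemma:sunflower}) shows any fixed $(k-1)$-edge index is shared by only $O(1)$ copies, else a member of $\c{F}_{T,k}^q$ appears. Combined with the dichotomy, this yields \Cref{theorem weak tree reduction}: $\ex(n,T,\c{F})$ is either $\Om(n^k)$ or $O(\ex(n,\c{F})^{k-1})$.

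Finally, your sketch never addresses the case that separates integer from non-integer exponents in $[0,\ell]$. The paper's \Cref{theorem:leafGap} splits on whether $\c{F}$ contains a forest. If it does, $\ex(n,\c{F})=O(n)$ and iterating the dichotomy pins $\ex(n,T,\c{F})$ at $\Theta(n^k)$ for an integer $k$ (this is \Cref{corollary trees vs forests}). If it does not, one exhibits the explicit construction $G=T_R^{n'}$ where $R$ is the set of non-leaves of $T$: since $T\ne K_2$ this $G$ is a tree, hence $\c{F}$-free, and it contains $\Om(n^\ell)$ copies of $T$ by \Cref{lemma powers many copies}, forcing $\ex(n,T,\c{F})=\Theta(n^\ell)$. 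This explicit extremal tree is the step that closes the argument, and it is absent from your proposal; your approach has no mechanism for establishing the matching lower bound $\Om(n^\ell)$ when $\c{F}$ is cycle-only, so it would not rule out non-integer exponents strictly below $\ell$ either.
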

	\Cref{theorem:stars} shows that the interval $[0,\ell]$ is best possible for stars.  We believe it is possible to push our methods further to completely characterize the set of realizable exponents for every tree in the range $[0,\ell]$, but due to its additional complexity, we defer this problem to future work.  
	
	\subsection{Generalized \texorpdfstring{Tur\'an}{Turan} Numbers of Trees}
	
	The methods we use here give several general results regarding the generalized Tur\'an number for trees.  The only prior results about counting copies of arbitrary trees $T$ that we are aware of are results of Gerbner~\cite{G2023} who determined the order of magnitude of $\ex(n,T,K_{2,t})$ for all trees $T$ and a result of Cambie, de Joannis de Verclos, and Kang~\cite{CJK23} who observed the exact value of $\ex(n,T,K_{1,t})$.  The only specific choice of $T$ for which $\ex(n,T,\c{F})$ seems to be well understood in general is when $T$ is a star, in which case general results of F\" uredi and K\" undgen~\cite{FK2006} gives an essentially optimal bound on $\ex(n,K_{1,t},\c{F})$ based on the classical Tur\'an number $\ex(n,\c{F})$.   In a similar spirit, the following result gives an effective relationship between $\ex(n,T,\c{F})$ and $\ex(n,\c{F})$ for arbitrary trees.
	
	
	\begin{theorem}\label{theorem weak tree reduction}
		If $T\ne K_1$ is a tree and if $\c{F}$ is a family of graphs, then for every integer $k\ge 1$ either
		\[
		\ex(n,T,\c{F})=\Om_{T,k}(n^k),
		\]
		or
		\[
		\ex(n,T,\c{F})=O_{T,k,\c{F}}(\ex(n,\c{F})^{k-1}).
		\]
	\end{theorem}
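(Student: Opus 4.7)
The plan is to prove the dichotomy by induction on $k$, starting with the trivial regime $k \geq v(T)$ and descending. For the base case $k \geq v(T)$, any copy of $T$ in a graph $G$ is determined by its set of $v(T)-1$ edges up to $|\mathrm{Aut}(T)|$-many automorphisms, so $N(T,G) \leq |\mathrm{Aut}(T)|\binom{e(G)}{v(T)-1} = O_T(e(G)^{v(T)-1})$. Applied to an $\mathcal{F}$-free graph with $e(G) \leq \ex(n,\mathcal{F})$ (and $\ex(n,\mathcal{F}) \geq 1$, else the theorem is trivially true), this yields $\ex(n,T,\mathcal{F}) = O_T(\ex(n,\mathcal{F})^{k-1})$, completing the second branch.

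For the inductive step with $1 \leq k \leq v(T) - 1$, the inductive hypothesis at $k+1$ yields either $\ex(n,T,\mathcal{F}) = \Omega(n^{k+1})$, giving $\Omega(n^k)$ and the first branch, or $\ex(n,T,\mathcal{F}) = O(\ex(n,\mathcal{F})^k)$. Assume the latter; we aim to sharpen this to $O(\ex(n,\mathcal{F})^{k-1})$ unless we can construct graphs with $\Omega(n^k)$ copies of $T$. The key tool is a leaf-peeling decomposition of $T$: fix an ordering $v_1, \ldots, v_{v(T)}$ of $V(T)$ so that each $v_i$ ($i \geq 2$) has a unique earlier neighbor $v_{p(i)}$, and let $T_k = T[\{v_1, \ldots, v_k\}]$, which is a subtree. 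This gives
\[
N(T,G) \leq \sum_{\phi \colon T_k \hookrightarrow G}\, \prod_{i > k} d_G\bigl(\phi(v_{p(i)})\bigr),
\]
since every copy of $T$ arises by extending a copy of $T_k$ through choices of neighbors of $\phi(v_{p(i)})$ for each $i > k$.

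I then case-split based on whether $\mathcal{F}$ contains a star. If some $K_{1,s}\in \mathcal{F}$, every $\mathcal{F}$-free graph has $\Delta(G)\leq s-1$, so the degree product is $O_{\mathcal{F}}(1)$, and applying the base case to the $k$-vertex subtree $T_k$ bounds the count of copies of $T_k$ by $O_T(\ex(n,\mathcal{F})^{k-1})$, completing the second branch. Otherwise $\mathcal{F}$ contains no star, so stars of arbitrary size are $\mathcal{F}$-free; provided $k \leq \ell(T)$, I would construct an $\mathcal{F}$-free graph with $\Omega(n^k)$ copies of $T$ by choosing $k$ leaves of $T$ and blowing each up to a set of size $\sim n/k$, leaving all other vertices of $T$ as singletons. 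Since the blown-up vertices are leaves (each attached to a unique parent in $T$), the resulting graph is again a tree — namely $T$ with $k$ of its leaves each fattened into a cluster of $\sim n/k$ pendant vertices sharing the same parent — and so is $\mathcal{F}$-free, because any $F \in \mathcal{F}$ not contained in $T$ itself either has a vertex of unbounded degree (impossible, as $\mathcal{F}$ is finite) or a cycle (impossible, since our construction is a tree). Each copy of $T$ arises from choosing one representative from each fattened cluster, yielding $\Omega_T((n/k)^k) = \Omega(n^k)$ copies.

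The main obstacle I anticipate is the case $k > \ell(T)$ within the ``$\mathcal{F}$ contains no star'' branch. Here $T$ has fewer than $k$ leaves, so the above leaf blow-up cannot produce $k$ independent axes of size $\sim n$ while remaining a tree: blowing up any non-leaf vertex of degree $\geq 2$ in $T$ creates a biclique $K_{2,m}$, and hence a $C_4$, potentially violating $\mathcal{F}$-freeness. In this regime the dichotomy's desired $O(\ex(n,\mathcal{F})^{k-1})$ bound must instead be proven by a sharper analysis — either by refining the leaf-peeling inequality using structural restrictions that $\mathcal{F}$ imposes on degree moments of $\mathcal{F}$-free graphs, or by an iterated partial-blow-up construction on nested subtrees that maintains $\mathcal{F}$-freeness. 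I expect working out this case to constitute the bulk of the technical effort in the proof.
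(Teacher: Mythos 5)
Your base case ($k \ge v(T)$) and the branch where $\c{F}$ contains a star are both fine. However, the branch where $\c{F}$ contains no star has a genuine gap already at $k \le \ell(T)$, and you explicitly punt on $k > \ell(T)$, which is in fact where almost all the work lies.

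The problem with the $k\le \ell(T)$ case is your claim that the leaf blow-up of $T$ is $\c{F}$-free whenever $\c{F}$ contains no star. Your reasoning is that any $F\in\c{F}$ not contained in $T$ either has unbounded degree or a cycle — but this is false: the leaf blow-up of $T$ contains many bounded-size trees that are not subgraphs of $T$. Concretely, take $T=P_5$ on vertices $a\!-\!b\!-\!c\!-\!d\!-\!e$, $k=2$, and let $\c{F}=\{F\}$ where $F$ is the ``chair'' (five vertices, degree sequence $(3,2,1,1,1)$, the degree-$3$ vertex adjacent to the degree-$2$ vertex). $F$ is not a star and not a subgraph of $P_5$. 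Yet blowing up the two leaves $a,e$ produces a tree in which $b$ has two pendant copies of $a$ and a neighbour $c$ of degree $2$, and this five-vertex induced configuration \emph{is} the chair — so the blow-up is not $\c{F}$-free. Moreover, one can check directly that every chair-free graph has all its high-degree vertices surrounded only by leaves, which forces $\ex(n,P_5,\{F\})=O(n)$, so the $\Omega(n^2)$ conclusion you want to draw here is simply wrong. The issue is that fixing $R$ to be the non-leaves of $T$ is not flexible enough: the paper (Proposition~\ref{proposition:keyObservation} and Lemma~\ref{lemma powers many copies}) ranges over \emph{all} $R\subsetneq V(T)$ for which $T-R$ has at least $k$ components, builds $T_R^n$, and only places the problem in the $\Omega(n^k)$ branch if \emph{some} such $R$ yields an $\c{F}$-free construction — i.e., if $\c{F}$ contains no subgraph of $T_R^q$ for any $q$.

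When no such $R$ exists (which is the generic situation, not an edge case), one must prove the upper bound $O(\ex(n,\c{F})^{k-1})$ directly from $\c{F}_{T,k}^q$-freeness. This is precisely the content of the paper's Theorem~\ref{theorem:treeReduction} (via Theorem~\ref{theorem:treeBoundEG}), and its proof is genuinely heavy: it uses the Erd\H{o}s--Rado sunflower lemma to find ``highly-extendable'' subtrees, a random partition of $V(G)$ to canonicalise monomorphisms, and a bespoke edge-Helly theorem for leaf-cuttable subtrees (Theorem~\ref{thm:Helly}) to cover all such subtrees with at most $k-1$ edges. Your plan identifies correctly that ``a sharper analysis'' is needed here, but does not supply the key idea; the downward induction on $k$ and the star/no-star dichotomy do not reduce or replace this core difficulty.
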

	In fact, we prove a slightly stronger technical version of this statement in \Cref{theorem:treeReduction} which characterizes when the lower bound or upper bound applies to a given family $\c{F}$.  We note that the statement of \Cref{theorem weak tree reduction} does not typically hold if one replaces the tree $T$ with some other type of graph $H$. In particular, such an extension fails for $H=C_4$; see the discussion at the end of Section~\ref{section concluding remarks} for more.
	
	We will show that \Cref{theorem weak tree reduction} can be used to quickly imply the following, which in essence states that the generalized Tur\'an problem for trees vs forests always has a very simple structure.
	\begin{corollary}\label{corollary trees vs forests}
		If $T\ne K_1$ is a tree and if $\c{F}$ is a family of graphs which contains a forest, then either $\ex(n,T,\c{F})=\Theta(n^k)$ for some integer $k\le \al(T)$ or $\ex(n,T,\c{F})=0$ for all sufficiently large $n$.
	\end{corollary}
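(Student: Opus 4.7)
The plan is to combine \Cref{theorem weak tree reduction} with two facts: since $\c{F}$ contains a forest, a standard argument gives $\ex(n,\c{F}) = O(n)$, and an $n$-vertex graph with $O(n)$ edges contains at most $O(n^{\al(T)})$ copies of $T$. Together these pin down $\ex(n,T,\c{F})$ as $\Theta(n^k)$ for some integer $k \in [0,\al(T)]$ unless it vanishes for all large $n$.

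Let $k^\star$ be the smallest non-negative integer with $\ex(n,T,\c{F}) = O(n^{k^\star})$, which is finite since $\ex(n,T,\c{F}) \le \binom{n}{v(T)}$. If $k^\star \ge 1$, applying \Cref{theorem weak tree reduction} with $k = k^\star$ would otherwise give $\ex(n,T,\c{F}) = O(\ex(n,\c{F})^{k^\star - 1}) = O(n^{k^\star - 1})$, contradicting minimality; so the lower-bound alternative holds and $\ex(n,T,\c{F}) = \Theta(n^{k^\star})$. If $k^\star = 0$, a standard monotonicity argument (adding isolated vertices does not create copies of the connected graph $T$) makes $\ex(n,T,\c{F})$ eventually zero or eventually a positive constant, the latter yielding $\Theta(n^0)$.

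The main technical content is the counting lemma $\ex(n,T,\c{F}) = O(n^{\al(T)})$, since this gives $k^\star \le \al(T)$ via \Cref{theorem weak tree reduction} applied with $k = \al(T) + 1$. To prove the lemma, let $B$ be a minimum vertex cover of $T$, let $I = V(T) \sm B$ (so $|I| = \al(T)$), and count copies by first choosing $\phi\colon B \to V(G)$ and then extending over $I$. Each $a \in I$ must go to a common neighbor of $\phi(N_T(a))$, so the number of choices is at most $d_G(\phi(\sigma(a)))$ for any assignment $\sigma\colon I \to B$ with $\sigma(a) \in N_T(a)$. Picking such a $\sigma$ surjective onto $B$ and writing $f(b) = |\sigma^{-1}(b)| \ge 1$, the count is at most
\[
\prod_{b \in B} \sum_{v \in V(G)} d_G(v)^{f(b)} \;\le\; (2m)^{\sum_b f(b)} \;=\; (2m)^{\al(T)} \;=\; O(n^{\al(T)}).
\]
The main obstacle is constructing a surjective $\sigma$: without surjectivity, each $b \in B$ outside the image of $\sigma$ contributes an extra factor of $n$ (from $\sum_v d_G(v)^0 = n$), and the bound fails. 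Here the bipartiteness of $T$ is essential: K\"onig's theorem gives a maximum matching in $T$ saturating $B$, and sending each $b \in B$ to its matched partner in $I$ (and extending arbitrarily on the remainder of $I$) produces such a $\sigma$.
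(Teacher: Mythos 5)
Your proof is correct and the overall architecture (set $k^\star$ minimal with $\ex(n,T,\c{F})=O(n^{k^\star})$, derive $\Theta(n^{k^\star})$ from \Cref{theorem weak tree reduction} by contradiction, handle $k^\star=0$ separately) matches the paper's, which works instead with \Cref{proposition:keyObservation} and \Cref{theorem:treeReduction} directly rather than the packaged \Cref{theorem weak tree reduction}. The genuine divergence is in establishing $k^\star\le\al(T)$: the paper observes that $\c{F}_{T,\al(T)+1}^q=\emptyset$ (\Cref{lemma alpha empty}), so every graph trivially avoids this family and \Cref{theorem:treeReduction} gives $\ex(n,T,\c{F})=O(\ex(n,\c{F})^{\al(T)})$, whereas you give a direct counting bound: choose an embedding of the minimum vertex cover $B$, extend over the independent complement $I$, route each $a\in I$ through a designated neighbor $\sigma(a)\in B$, and use a K\"onig matching to make $\sigma$ surjective so that every factor in $\prod_b\sum_v d_G(v)^{f(b)}$ absorbs at least one power of $2m$. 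This is entirely elementary, bypasses the Helly machinery of \Cref{theorem:treeReduction} for this particular application, and is exactly the argument the paper gestures at in Section~\ref{section concluding remarks} when noting that $\ex(n,H,\c{F})=O(\ex(n,\c{F})^{\al(H)})$ holds for any bipartite $H$ without isolated vertices by K\"onig's theorem.

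One imprecision worth flagging: the phrase ``a standard monotonicity argument'' for $k^\star=0$ is misleading, since $\ex(n,T,\c{F})$ need not be monotone in $n$ when $\c{F}$ contains graphs with isolated vertices (adding an isolated vertex to an $\c{F}$-free extremal graph can introduce a forbidden subgraph). The correct dichotomy, which the paper states up front, is by cases: either some $F\in\c{F}$ is a subgraph of $T$ together with isolated vertices, in which case $\ex(n,T,\c{F})=0$ for all large $n$; or not, in which case the graph $T$ plus $n-v(T)$ isolated vertices is $\c{F}$-free for every $n\ge v(T)$, giving $\ex(n,T,\c{F})\ge 1$ and hence $\Theta(1)$ once $k^\star=0$ gives the matching $O(1)$ upper bound. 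Your parenthetical ``adding isolated vertices does not create copies of $T$'' is true but is not what justifies the dichotomy; you should replace the monotonicity claim with the explicit case split.
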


	This in turn implies the following ``stability'' result for generalized Tur\'an numbers of trees, which informally says that either $\ex(n,H,\c{F})$ is large or $\ex(n,H,\c{F})$ takes on a very particular form.  As far as we are aware this is the first such ``stability'' result of its kind for generalized Tur\'an numbers.
	
	\begin{corollary}\label{theorem:leafGap}
		If $T\ne K_2$ is a tree with $\ell\ge 2$ leaves and if $\c{F}$ is a family of graphs such that $\ex(n,T,\c{F})=O(n^\ell)$, then either $\ex(n,T,\c{F})=\Theta(n^k)$ for some integer $k$ or $\ex(n,T,F)=0$ for all sufficiently large $n$.
	\end{corollary}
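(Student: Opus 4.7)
The plan is to reduce Corollary~\ref{theorem:leafGap} to Corollary~\ref{corollary trees vs forests}. If $\c{F}$ already contains a forest, then Corollary~\ref{corollary trees vs forests} immediately gives the conclusion: it yields $\ex(n, T, \c{F}) = \Theta(n^k)$ for some integer $k \le \al(T)$ or $\ex(n, T, \c{F}) = 0$ for all large $n$, and the hypothesis $\ex(n, T, \c{F}) = O(n^\ell)$ together with $\al(T) \ge \ell$ (the $\ell$ leaves of $T$ form an independent set in $T$, since $T \ne K_2$) forces $k \le \ell$.

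For the general case, I would identify a specific forest $F^* = F^*(T)$, depending only on $T$, such that under the hypothesis $\ex(n, T, \c{F}) = O(n^\ell)$ one has $\ex(n, T, \c{F}) = \Theta(\ex(n, T, \c{F} \cup \{F^*\}))$. The upper bound $\ex(n, T, \c{F} \cup \{F^*\}) \le \ex(n, T, \c{F})$ is trivial; the key direction is that extremal $\c{F}$-free graphs with $\Theta(n^\ell)$ copies of $T$ are already $F^*$-free up to constant factors. Once this equivalence is established, Corollary~\ref{corollary trees vs forests} applies to $\c{F} \cup \{F^*\}$ (which contains the forest $F^*$), and the dichotomy transfers back to $\c{F}$.

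A natural candidate for $F^*$ is the path $P_{v(T)+1}$. On the one hand, the canonical extremal constructions for $\ex(n, T, \cdot)$ achieving $\Theta(n^\ell)$ copies of $T$---the star for $T = K_{1,\ell}$, the double star for $T = P_4$, or a suitable blown-up caterpillar in general---avoid long paths. On the other hand, the required absolute bound $\ex(n, T, P_{v(T)+1}) = O(n^\ell)$ is exactly the type of statement that should follow from the Helly-type variant for trees developed in this paper (alluded to in the abstract), which structurally constrains how many copies of a tree can cluster in a graph. Once this uniform bound is in hand, an extremal $\c{F}$-free graph with $\Theta(n^\ell)$ copies of $T$ cannot contain $P_{v(T)+1}$ (apart from an adjustment of constants), and the equivalence follows.

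The main obstacle is identifying $F^*$ correctly for arbitrary $T$ and proving the absolute bound $\ex(n, T, F^*) = O(n^\ell)$ in full generality. The dichotomy of Theorem~\ref{theorem weak tree reduction} alone is not enough: it controls $\ex(n, T, \c{F})$ only in terms of $\ex(n, \c{F})$, which need not be linear in $n$ when $\c{F}$ contains no tree. The Helly-type input is therefore essential to produce an $n$-dependent absolute bound that is uniform in $\c{F}$, and this is where the bulk of the technical work will lie.
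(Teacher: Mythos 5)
Your handling of the first case (when $\c{F}$ contains a forest) coincides with the paper's. But your treatment of the remaining case is a genuinely different route, and it has a real gap; the paper handles that case directly, by a lower-bound construction, in just a few lines.

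The paper's argument when $\c{F}$ contains no forest: take $R$ to be the set of non-leaves of $T$ and consider $G = T_R^{n'}$ with $n' = \lfloor n/v(T)\rfloor$, i.e.\ $T$ with each leaf duplicated $n'$ times. Since $T\ne K_2$, the unique neighbor of every leaf is a non-leaf and hence in $R$, so $T-R$ consists of the $\ell$ leaves as isolated vertices and thus has exactly $\ell$ components. By Lemma~\ref{lemma powers many copies}, $G$ contains $\Omega(n^\ell)$ copies of $T$; and $G$ is a tree, so it is $\c{F}$-free because every member of $\c{F}$ contains a cycle. This gives $\ex(n,T,\c{F})=\Omega(n^\ell)$, which together with the hypothesis $\ex(n,T,\c{F})=O(n^\ell)$ immediately yields $\ex(n,T,\c{F})=\Theta(n^\ell)$. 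No reduction to the forest case is needed, and no upper-bound machinery (Helly-type or otherwise) is invoked.

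Your proposal instead tries to manufacture a forest $F^*$ with $\ex(n,T,\c{F})=\Theta(\ex(n,T,\c{F}\cup\{F^*\}))$ and then apply Corollary~\ref{corollary trees vs forests} to $\c{F}\cup\{F^*\}$. There are two problems. First, the claimed auxiliary bound $\ex(n,T,P_{v(T)+1})=O(n^\ell)$ is never established (and is not an obvious fact for arbitrary trees), and it is not clear $P_{v(T)+1}$ is the right choice. Second, and more fundamentally, even if such an absolute bound held, it would not imply the equivalence $\ex(n,T,\c{F})=\Theta(\ex(n,T,\c{F}\cup\{F^*\}))$: an extremal $\c{F}$-free graph with $\Theta(n^\ell)$ copies of $T$ need not be $F^*$-free, nor approximately so, and you would still need an argument that forbidding $F^*$ does not destroy the count. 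You flag exactly this as ``the main obstacle,'' which means the key step of the proposal is missing. The cleaner observation is that in the case $\c{F}$ is forest-free, one does not need any upper-bound transfer at all: the duplicated-leaves tree is an explicit forest that is $\c{F}$-free and already saturates the hypothesized $O(n^\ell)$ bound from below.
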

	
	Observe that \Cref{theorem:leafNonrealizable} immediately follows from \Cref{theorem:leafGap}.
	
	\subsection{Organization}

	We establish some general preliminary results and notation in \Cref{section preliminaries}.  We prove our realizable results in \Cref{section realizable} and both our non-realizable results and generalized Tur\'an results for trees in \Cref{section non-realizable}.  We emphasize that these two sections are independent of each other and can be read in either order.  We conclude with some comments and open problems in \Cref{section concluding remarks}.

	\section{Preliminaries}\label{section preliminaries}
	In this section we gather our necessary tools and definitions.  We will begin by shifting our perspective away from trying to count copies of $H$ and instead counting the following closely related concept.
	
	\begin{definition}
		Given graphs $H,G$, we say that a map  $\phi:V(H)\to V(G)$ is a \textbf{monomorphism} of $H$ into $G$ if $\phi$ is both injective and a homomorphism.  We let $\Mon(H,G)$ denote the set of monomorphisms of $H$ into $G$ and we let $\mon(H,G):=|\Mon(H,G)|$.
	\end{definition}
	The main motivation for this definition is the following.
	\begin{observation}\label{observation monomorphisms same as copies}
		For every pair of graphs $H,G$, the number of copies of $H$ in $G$ equals
		\[\frac{\mon(H,G)}{\mathrm{aut}(H)},\]
		where $\mathrm{aut}(H)$ denotes the size of the automorphism group of $H$, i.e.\ the number of isomorphisms from $H$ to itself.
	\end{observation}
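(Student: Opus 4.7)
The plan is a standard double-counting argument: I would set up a surjection from $\Mon(H,G)$ onto the set of copies of $H$ in $G$ and show that every fiber has the same size $\mathrm{aut}(H)$.

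First, I would verify that every monomorphism produces a copy. Given $\phi \in \Mon(H,G)$, define $C_\phi$ to be the subgraph of $G$ with vertex set $\phi(V(H))$ and edge set $\{\phi(u)\phi(v) : uv \in E(H)\}$. Injectivity of $\phi$ ensures that $|V(C_\phi)| = v(H)$ and $|E(C_\phi)| = e(H)$, while the homomorphism property ensures that every listed edge actually lies in $E(G)$; hence $C_\phi$ is a subgraph of $G$ isomorphic to $H$ via $\phi$ itself. This gives a well-defined map $\Phi : \Mon(H,G) \to \{\text{copies of } H \text{ in } G\}$.

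Next, I would check surjectivity and compute the fiber size. Surjectivity is immediate: any copy $C$ of $H$ in $G$ comes equipped with an isomorphism $\psi_0 : H \to C$, which, regarded as a map into $V(G)$, is a monomorphism with $\Phi(\psi_0) = C$. For the fiber over a fixed copy $C$, fix such a $\psi_0$ and observe that $\phi \mapsto \psi_0^{-1} \circ \phi$ is a bijection from $\Phi^{-1}(C)$ onto the set of automorphisms of $H$, with inverse $\sigma \mapsto \psi_0 \circ \sigma$. Hence each fiber has exactly $\mathrm{aut}(H)$ elements, and summing over all copies yields $\mon(H,G) = \mathrm{aut}(H)\cdot |\{\text{copies of } H \text{ in } G\}|$, which rearranges to the stated identity.

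There is no real obstacle here; the only technicality to keep in mind is that a \emph{copy} of $H$ in $G$ means a (not necessarily induced) subgraph isomorphic to $H$, so the edge set $\{\phi(u)\phi(v) : uv \in E(H)\}$ is the correct choice for $C_\phi$ even when the induced subgraph $G[\phi(V(H))]$ carries additional edges. With this convention fixed, all three steps above are routine.
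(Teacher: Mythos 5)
Your argument is correct and is essentially the same as the paper's: both count monomorphisms by partitioning them according to their image (a copy of $H$), noting that each copy is the image of exactly $\mathrm{aut}(H)$ monomorphisms. You have simply written out the bijection $\phi \mapsto \psi_0^{-1}\circ\phi$ that the paper leaves implicit.
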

	Indeed, this follows from the fact that a copy of $H$ in $G$ is just a subgraph of $G$ which is isomorphic to $H$, and the exactly $\mathrm{aut}(H)$ choices of isomorphisms for each copy gives rise to exactly $\mathrm{aut}(H)$ monomorphisms corresponding uniquely to this copy.  Because of this observation, we will often bound the number of copies of a graph $H$ by instead bounding $\mon(H,G)$.

	We next introduce the notion of rooted graphs which will be crucial to our study of realizable exponents.  While these graphs will not show up explicitly in our work on non-realizable exponents, they will turn out to be quite related to the objects needed for our main non-realizable result \Cref{proposition:keyObservation}.
	
	\begin{definition}
		Given a graph $F$ and set of vertices $R\subsetneq V(F)$, we say that the pair $(F,R)$ is a \textbf{rooted graph} and that $R$ is its set of \textbf{roots}.  We define the \textbf{rooted density} of a rooted graph to be
		\[
		\max_{S\sub V(F)\sm R,\ S\ne \emptyset}\frac{\mathrm{inc}(S)}{|S|},
		\]
		where $\mathrm{inc}(S)$ is defined to be the number of edges of $F$ incident to a vertex of $F$.  We say that $(F,R)$ is \textbf{balanced} if it has rooted density equal to $\frac{e(F)}{v(F)-|R|}$.
		
		Given a rooted graph $(F,R)$ and a positive integer $\pow$, we define the \textbf{rooted power} $(F,R)^\pow$ to be the family of graphs $F'$ which can be written as the union of $\pow$ distinct copies of $F$ which all agree on the set $R$. 
	\end{definition}
	
	The fundamental fact we need about these definitions is that there exists graphs which contain many copies of $H$ while avoiding rooted powers.  This was initially proven for $H=K_2$ in a highly influential result of Bukh and Conlon~\cite{BC2018} with this later being extended in \cite{S24} to general $H$.

	\begin{theorem}[Proposition 2.1 \cite{S24}]\label{theorem random polynomial lower bounds}
		If $H$ is a graph and if $(F_1,R_1),\ldots,(F_t,R_t)$ are rooted graphs which all have rooted density at least $d$ for some rational $d$, then there exists some integer $\pow_0$ such that for all integers $\pow\ge \pow_0$, we have
		\[\ex(n,H,\bigcup_i (F_i,R_i)^\pow)=\Omega(n^{v(H)-e(H)/d}).\]
	\end{theorem}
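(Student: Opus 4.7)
The plan is to follow the random algebraic construction of Bukh and Conlon. Write $d = a/b$ with coprime positive integers $a$ and $b$. For each sufficiently large prime power $Q$ (playing the role of $n^{1/a}$), I would construct a random graph $G$ on the vertex set $V(G)=\mathbb{F}_Q^{\,a}$ by joining $x\ne y$ by an edge exactly when $f_1(x,y)=\cdots=f_b(x,y)=0$, where each $f_i$ is an independent, uniformly random symmetric polynomial of some large fixed degree $D=D(F_1,\ldots,F_t,\pow)$ over $\mathbb{F}_Q$. Then $n := |V(G)| = Q^{a}$, each potential edge is present with probability roughly $Q^{-b}$, and the edge-vanishing events are ``generic enough'' (in the sense of the polynomial method) that
\[
\E[\mon(H,G)] = \Theta\!\left(n^{v(H)}\cdot Q^{-b\cdot e(H)}\right) = \Theta\!\left(n^{v(H)-e(H)/d}\right).
\]

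Next I would bound, for each forbidden graph $F'\in \bigcup_i (F_i,R_i)^\pow$, the expectation $\E[\mon(F',G)]$. Writing $F'$ as $\pow$ copies of some $(F_i,R_i)$ agreeing on a common root embedding, I first fix the image of $R_i$ (in at most $n^{|R_i|}$ ways) and then bound the expected number of extensions to each of the $\pow$ copies. The rooted density hypothesis supplies a subset $S^\star\sub V(F_i)\sm R_i$ with $\mathrm{inc}(S^\star)\ge d|S^\star|$; restricting to the rooted subgraph on $R_i\cup S^\star$, whose image is forced by any embedding of $F_i$, the expected number of such extensions is at most $Q^{a|S^\star|-b\cdot\mathrm{inc}(S^\star)}\le 1$. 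Raising to the $\pow$-th power and taking $\pow$ sufficiently large relative to $a$, $b$, and the sizes of the $F_i$ makes $\E[\mon(F',G)] = o(n)$.

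A standard alteration finishes the argument: by Markov's inequality there is an outcome of $G$ for which $\mon(H,G)=\Om(\E[\mon(H,G)])$ while simultaneously $\sum_{F'}\mon(F',G)=O(n)$; deleting one vertex from each forbidden copy destroys all of them yet preserves $\Om(n^{v(H)-e(H)/d})$ copies of $H$, giving the desired construction once $\pow_0$ is chosen large enough. The main technical obstacle, and the core content inherited from Bukh's work, is justifying the ``near independence'' of the edge events used in both expectation computations; this is handled via Schwartz--Zippel type estimates together with a careful analysis of generic fibers of the polynomial maps, which force the degree $D$ to be chosen sufficiently large relative to all vertex counts of the $F_i$ and to $\pow$.
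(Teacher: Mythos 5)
The paper does not actually prove this theorem: it is cited verbatim as Proposition~2.1 of \cite{S24}, which in turn generalizes the random-polynomial construction of Bukh and Conlon \cite{BC2018}. So there is no in-paper proof to compare against, but your sketch can still be assessed on its own terms, and it has real gaps in the two places where the work actually happens.

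Your setup and the first-moment computation for $\E[\mon(H,G)]$ are fine. The gap is in bounding the forbidden configurations: ``raising to the $\pow$-th power'' is not a valid expectation calculation. The $\pow$ copies of $F_i$ making up a graph $F'\in (F_i,R_i)^\pow$ all pass through the same root image and are all determined by the same $b$ random polynomials, so the expectation of their joint occurrence is not the $\pow$-th power of a single-copy expectation; moreover, a bound of $1$ on the expected number of partial extensions to $R_i\cup S^\star$ does not by itself control extensions to all of $V(F_i)\setminus R_i$ when $(F_i,R_i)$ is unbalanced. What makes Bukh's method work---which you gesture at only in your closing sentence---is the algebraic dichotomy: for a fixed root embedding, the set of extensions to a copy of $F_i$ is cut out by polynomial equations of bounded degree, and Lang--Weil-type estimates force its size to be either at most some constant $c$ (depending on the $F_i$ and $D$ but not on $Q$) or at least on the order of $Q^{1/2}$. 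Choosing $\pow_0>c$ then means a forbidden $\pow$-th power can only arise from a ``heavy'' root embedding with at least $Q^{1/2}$ extensions, and one bounds the expected number of those directly. Your alteration step also relies on this implicitly: deleting one vertex from each of $O(n)$ forbidden copies may remove $\Theta(n)$ vertices, and you have no control on how many copies of $H$ pass through the deleted set (and $v(H)-e(H)/d$ may well be at most $1$, so ``preserves $\Omega(n^{v(H)-e(H)/d})$ copies'' does not follow). The argument that actually closes the gap deletes vertices only from the much smaller collection of heavy root embeddings supplied by the dichotomy.
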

	
	\Cref{theorem random polynomial lower bounds} is most effective when at least one of the rooted graphs is balanced.  In order to build such balanced rooted graphs, we will rely on the following fact proven by Bukh and Conlon that balanced rooted trees exist for all densities.
	
	\begin{lemma}[Lemma 1.3 \cite{BC2018}]\label{lem:BukhConlon}
		For all positive integers $a,b$ with $b\ge a-1$, there exists a balanced rooted tree $(T,R_T)$ of rooted density $b/a$ which has $a$ unrooted vertices $u_1,\ldots,u_a$ forming a path in $T$ with $\deg_T(u_i)\ge b/a$ for all $i$.
	\end{lemma}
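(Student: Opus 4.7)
The plan is to construct $T$ as a caterpillar. Let $P = u_1 u_2 \cdots u_a$ be a path of $a$ unrooted vertices (the spine), attach $d_i$ leaves to each $u_i$ for suitable nonnegative integers $d_i$, and declare all the attached leaves to be roots. Then $|V(T) \setminus R_T| = a$ and $e(T) = (a - 1) + \sum_i d_i$, so requiring $\sum_i d_i = b - a + 1$ yields $e(T) = b$; this sum is a nonnegative integer by the hypothesis $b \ge a - 1$. With these counts, the overall density $e(T)/|V(T) \setminus R_T|$ equals the target $b/a$ and is attained at $S = \{u_1, \ldots, u_a\}$, so ``balanced'' reduces to showing $\mathrm{inc}(S)/|S| \le b/a$ for every nonempty $S \subseteq \{u_1, \ldots, u_a\}$.

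The key reduction is the identity $\mathrm{inc}(S) = \sum_{i \in S} d_i + (|S| - k)$, where $k$ is the number of maximal runs of consecutive indices in $S$: the edges contributing to $\mathrm{inc}(S)$ split into the leaf-edges from $S$ to $R_T$ and the spine-edges with both endpoints in $S$, and the latter count is exactly $|S| - k$. Thus the balance inequality becomes $\sum_{i \in S} d_i \le |S|(b/a - 1) + k$, which, for fixed $|S|$, is tightest at $k = 1$. So it suffices to verify the inequality on contiguous spine-intervals, i.e.\ to show that for every subinterval $[j, j + s - 1] \subseteq [1, a]$,
\[
\sum_{i = j}^{j + s - 1} d_i \le \frac{s(b - a) + a}{a}.
\]
The degree condition $\deg_T(u_i) = d_i + c_i \ge b/a$ (where $c_i \in \{1, 2\}$ is the number of path-neighbors of $u_i$) will follow automatically from the explicit choice of $d_i$.

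To define $d_i$, write $b = qa + r$ with $0 \le r < a$. I take the \emph{base} assignment $d_1 = d_a = \lceil b/a \rceil - 1$ at the endpoints and $d_i = \max(0,\, \lceil b/a \rceil - 2)$ at interior vertices, which satisfies the degree condition outright. The base sum falls short of $b - a + 1$ by a controlled deficit $\delta$ (equal to $a - 1$ when $r = 0$ and $r - 1$ when $r \ge 1$), and I distribute the remaining $\delta$ extra leaves among the interior spine vertices as evenly as possible, subject to the constraint that no $d_i$ exceeds $\lfloor b/a \rfloor$ (which is forced by the $s = 1$ case of the balance inequality). The main obstacle is verifying the contiguous balance inequality for \emph{every} window-length $s$ simultaneously, since concentrating extras in a short window can violate the bound; I would address this by an explicit even-spacing rule for the placements of the extras, after which a routine case analysis on whether $r = 0$ or $r \ge 1$ reduces the inequality to the elementary fact $\lfloor sb/a \rfloor \ge s \lfloor b/a \rfloor$.
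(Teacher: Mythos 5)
This lemma is quoted from Bukh--Conlon~\cite{BC2018}; the present paper gives no proof of it, only a remark that the degree lower bound $\deg_T(u_i)\geq b/a$ is automatic once the tree is balanced. Your caterpillar outline (a spine of $a$ unrooted vertices with rooted pendant leaves, total $\sum d_i=b-a+1$) is the right shape and does match the Bukh--Conlon construction, but the proof as written is not complete.

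First, your identity $\mathrm{inc}(S) = \sum_{i\in S} d_i + (|S|-k)$ counts the edges inside $F[S\cup R_T]$, not the edges incident to $S$: for a window $S=[j,j+s-1]$ in the interior of the spine, the two spine edges $u_{j-1}u_j$ and $u_{j+s-1}u_{j+s}$ each have one endpoint in $S$ but are omitted by your count, so the number of edges incident to $S$ is $\sum_{i\in S}d_i+(s+1)$. Meanwhile the paper's ``$\max\leq b/a$'' phrasing of balance cannot be taken at face value: with the incident-edge count and $S=\{u_i\}$ it would force $\deg_T(u_i)\leq b/a$, contradicting the lemma's own degree condition unless $a\mid b$. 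Bukh and Conlon define balance by the opposite inequality, $\mathrm{inc}(S)/|S|\geq b/a$ for every nonempty $S\subseteq V(T)\setminus R_T$. Because the roots here form an independent set, the quantities (edges incident to $S$) and (edges inside $F[\bar{S}\cup R_T]$) sum to $e(T)$ for complementary $S,\bar{S}$, so your pair of deviations --- the smaller edge count and the reversed direction --- exactly compensate, and the target $\sum_{i=j}^{j+s-1}d_i\leq\frac{s(b-a)+a}{a}$ that you write down is in fact the correct window inequality to prove. But you arrived there through two offsetting errors, and this should be untangled explicitly rather than left as a coincidence.

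The more substantive gap is that you never specify the distribution of the $\delta$ surplus leaves or verify the window inequality once they are placed. You acknowledge this, but ``an explicit even-spacing rule'' followed by ``a routine case analysis reducing to $\lfloor sb/a\rfloor\geq s\lfloor b/a\rfloor$'' is a plan, not a proof, and the placement genuinely matters: with $a=10$, $b=13$ the base assignment has deficit $2$, and placing the two extras at $u_5,u_6$ (an ostensibly central, balanced choice) already gives a window $[5,6]$ with $d_5+d_6=2>1.6=\frac{2(b-a)+a}{a}$. Your closed form for $\delta$ also misfires in the low-density cases $b=a-1$ and $b=a$ when $a\geq 3$. The cleaner route --- and what Bukh and Conlon in effect do --- is to define $d_i$ directly as a consecutive difference of a rounded linear function of $i$, so that every window sum telescopes and the inequality becomes a one-line floor estimate; prescribing a base assignment and then redistributing surplus is considerably harder to certify.
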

	We note that the condition $\deg_T(u_i)\ge b/a$ is not immediate from the definition of $(T,R_T)$ given in \cite[Definition 1.5]{BC2018}, but this condition follows from the fact that these rooted trees are balanced with rooted density $b/a$.

	We will often be counting copies of $H$ in some larger host graph $G$. To distinguish between vertices in $H$ and copies of these vertices contained in a subgraph of $G$ isomorphic to $H$, we will often write $\hat{x},\hat{y}$, etc. for vertices that are in $H$ with us omitting the ``hat'' notation for vertices in $G$ (usually with $x\in V(G)$ playing the role of $\hat{x}\in V(H)$).

	\section{Realizable Exponents}\label{section realizable}
	
	We  make use of the following technical definition to show the existence of realizable exponents.
	\begin{definition}
		We say that a graph $H$ is \textbf{$d$-admissible} for some rational $d$ if there exists a finite set of rooted graphs $\{(F_1,R_1),\ldots,(F_t,R_t)\}$ which each have rooted density at least $d$ such that for all $\pow\in\mathbb{N}$,  we have 
		\[
		\mathrm{ex}(n,H,\bigcup_i(F_i,R_i)^\pow)=O_\pow(n^{v(H)-e(H)/d}).
		\]
		In this case we say $H$ is $d$-admissible with respect to the set $\{(F_1,R_1),\ldots,(F_t,R_t)\}$.
	\end{definition}
	
	The key observation with this definition is the following.
	\begin{observation}\label{observation admissible implies realizable}
		If $H$ is $d$-admissible, then the rational $v(H)-e(H)/d$ is realizable for $H$.
	\end{observation}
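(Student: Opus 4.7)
The plan is to realize the exponent $v(H)-e(H)/d$ by exhibiting a single finite family $\c{F}$ for which matching upper and lower bounds on $\ex(n,H,\c{F})$ are forced by the two tools already at hand: $d$-admissibility supplies the upper bound, while \Cref{theorem random polynomial lower bounds} supplies the lower bound. The key point is that both results can be invoked on the \emph{same} family, namely a sufficiently high rooted power of the rooted graphs witnessing admissibility.

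Concretely, let $\{(F_1,R_1),\ldots,(F_t,R_t)\}$ be the rooted graphs of rooted density at least $d$ that witness $d$-admissibility of $H$. First I would apply \Cref{theorem random polynomial lower bounds} to this same collection to obtain some $\pow_0$ such that for every $\pow \ge \pow_0$,
\[
\ex\Bigl(n,H,\bigcup_i (F_i,R_i)^\pow\Bigr) = \Omega\bigl(n^{v(H)-e(H)/d}\bigr).
\]
Next, I would fix any $\pow \ge \pow_0$ and take $\c{F} := \bigcup_i (F_i,R_i)^\pow$. The admissibility hypothesis applied to this same $\pow$ yields
\[
\ex(n,H,\c{F}) = O_\pow\bigl(n^{v(H)-e(H)/d}\bigr),
\]
and combining the two estimates gives $\ex(n,H,\c{F})=\Theta(n^{v(H)-e(H)/d})$, which is precisely the assertion that $v(H)-e(H)/d$ is realizable for $H$.

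The only remaining check is that $\c{F}$ is a finite family: each rooted power $(F_i,R_i)^\pow$ consists of graphs on at most $|R_i|+\pow(v(F_i)-|R_i|)$ vertices built from finitely many copies of $F_i$, so it contains only finitely many graphs up to isomorphism, and a finite union of finite families is finite. There is no genuine obstacle here; the only subtle point worth flagging is that one must use the \emph{same} value of $\pow$ in both bounds, which is possible precisely because admissibility is asserted uniformly for all $\pow$, so one is free to take $\pow \ge \pow_0$.
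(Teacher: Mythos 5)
Your proof is correct and follows essentially the same route as the paper: fix $\pow$ large enough so that \Cref{theorem random polynomial lower bounds} gives the lower bound, then apply the definition of $d$-admissibility to the same $\pow$-th powers for the matching upper bound. The finiteness check you add is correct but routine.
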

	Indeed, this follows by taking $\c{F}_\pow$ to be the family of $\pow$-th powers of the rooted graphs $(F_i,R_i)$ for which $H$ is $d$-admissible with respect to, as such a family satisfies $\ex(n,H,\c{F}_\pow)=O_\pow(n^{v(H)-e(H)/d})$ by definition, and for $\pow$ large enough it also satisfies $\ex(n,H,\c{F}_\pow)=\Omega(n^{v(H)-e(H)/d})$ by \Cref{theorem random polynomial lower bounds}.
	
	With \Cref{observation admissible implies realizable} in mind, proving realizability reduces to proving admissibility.  The main advantage of this perspective is that the notion of admissibility turns out to be quite malleable for inductively building larger admissible graphs from smaller ones.  For example, we have the following basic fact.
	
	\begin{observation}\label{observation disjoint union of dadmissible}
		If $H_1$ and $H_2$ are $d$-admissible, then their disjoint union $H_1\sqcup H_2$ is also $d$-admissible.
	\end{observation}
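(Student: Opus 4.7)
The plan is to verify the definition of $d$-admissibility for $H_1\sqcup H_2$ directly, by taking the union of the witnessing families. Concretely, let $\c{S}_i=\{(F^{(i)}_1,R^{(i)}_1),\ldots,(F^{(i)}_{t_i},R^{(i)}_{t_i})\}$ be a finite set of rooted graphs, each of rooted density at least $d$, that witnesses $d$-admissibility of $H_i$ for $i=1,2$. I will show that $\c{S}:=\c{S}_1\cup \c{S}_2$ witnesses $d$-admissibility of $H_1\sqcup H_2$. Every rooted graph in $\c{S}$ already has rooted density at least $d$ by hypothesis, so the only thing to verify is the upper bound on the generalized Tur\'an number.

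Fix $\pow\in \mathbb{N}$ and let $G$ be an $n$-vertex graph that avoids every member of $\bigcup_{(F,R)\in\c{S}}(F,R)^\pow$. Then $G$ avoids $\bigcup_{(F,R)\in\c{S}_i}(F,R)^\pow$ for each $i\in\{1,2\}$, so the hypothesized $d$-admissibility of $H_i$ immediately gives that $G$ contains at most $O_\pow(n^{v(H_i)-e(H_i)/d})$ copies of $H_i$. To lift these separate bounds to a bound on copies of $H_1\sqcup H_2$, I would pass through monomorphisms: any monomorphism $\phi\colon V(H_1\sqcup H_2)\to V(G)$ restricts to monomorphisms of $H_1$ and of $H_2$ into $G$, so
\[
\mon(H_1\sqcup H_2,G)\le \mon(H_1,G)\cdot \mon(H_2,G).
\]
Using \Cref{observation monomorphisms same as copies} to convert each $H_i$-copy bound into a bound on $\mon(H_i,G)$ (losing only a constant factor $\mathrm{aut}(H_i)$), multiplying, and then dividing by $\mathrm{aut}(H_1\sqcup H_2)$, the number of copies of $H_1\sqcup H_2$ in $G$ is at most $O_\pow(n^{v(H_1)+v(H_2)-(e(H_1)+e(H_2))/d})$. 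Since $v(H_1\sqcup H_2)=v(H_1)+v(H_2)$ and $e(H_1\sqcup H_2)=e(H_1)+e(H_2)$, this exponent is exactly $v(H_1\sqcup H_2)-e(H_1\sqcup H_2)/d$, confirming $d$-admissibility with respect to $\c{S}$.

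I do not expect a genuine obstacle here: the argument is just the expected ``additivity'' of $v(\cdot)$ and $e(\cdot)$ under disjoint union combined with the multiplicativity of monomorphism counts. The only minor care points are verifying that avoiding the union family implies avoiding each subfamily (immediate from $\c{S}_i\subseteq \c{S}$) and avoiding double-counting when pairing up the two disjoint copies (handled cleanly by switching to monomorphisms rather than reasoning about unordered pairs of subgraphs).
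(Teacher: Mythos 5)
Your proof is correct and takes exactly the approach the paper intends: the paper's own justification is the single sentence ``this follows by taking the union of the two sets of rooted graphs for which $H_1,H_2$ are $d$-admissible with respect to,'' and you have just supplied the (straightforward) details, with the inequality $\mon(H_1\sqcup H_2,G)\le \mon(H_1,G)\cdot\mon(H_2,G)$ and \Cref{observation monomorphisms same as copies} doing the bookkeeping. Nothing to flag.
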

	Indeed, this follows by taking the union of the two sets of rooted graphs for which $H_1,H_2$ are $d$-admissible with respect to. 

	With \Cref{observation disjoint union of dadmissible}, it suffices to determine admissibility of connected graphs.  In the original work of Bukh and Conlon for $H=K_2$, this was done by establishing \Cref{lem:BukhConlon} showing the existence of certain balanced rooted trees.  In a similar spirit, we will construct balanced rooted graphs which are ``tree-like'' with respect to $H$.
	
	\begin{definition}\label{definition H-tree}
		Given a graph $H$, we say that a graph $F$ is an \textbf{$H$-tree} if there exist distinct copies $H_1,\ldots,H_t$ of $H$ in $F$ together with isomorphisms $h_i:V(H)\to V(H_i)$ for all $i$ such that the following conditions hold:
		\begin{enumerate}[label=(\Alph*)]
			\item We have $F=\bigcup_{i=1}^t H_i$, and
			\item For all $i\ge 2$, there exists an integer $1\le j_i<i$ such that 
			\[V(H_i)\cap \bigcup_{j=1}^{i-1} V(H_j)=V(H_i)\cap V(H_{j_i}),\]
			and such that every $v\in V(H_i)\cap V(H_{j_i})$ satisfies $h^{-1}_i(v)=h^{-1}_{j_i}(v)$.
		\end{enumerate}
		The set $\{H_1.\ldots,H_t\}$ will be called a set of \textbf{defining copies} of $F$.  The sequence $(h_1,j_2,h_2,j_3,\dots,h_t)$ will be called a \textbf{witness} for $F$.  We say that a rooted graph $(F,R)$ is a \textbf{rooted $H$-tree} if $F$ is an $H$-tree.
	\end{definition}
	We emphasize that the definition of an $H$-tree theoretically allows for $F$ to be disconnected, but in practice we only ever needs to use connected $F$, hence our naming this an $H$-tree rather than an $H$-forest.
	
	We can now state our main result for $d$-admissible graphs for $d$ large.  For this statement, we say that $H'\sub H$ is a \textbf{proper induced subgraph} of $H$ if $H'$ is an induced subgraph with $H'\ne H$.

	\begin{theorem}\label{theorem d large tree implies dadmissible}
		Let $H$ be a connected graph and $d\geq \Delta(H)$ rational. If there exists a balanced rooted $H$-tree $(F,R)$ of rooted density $d$ such that every proper induced subgraph of $H$ and every component of $F[R]$ is $d$-admissible, then $H$ is $d$-admissible.
	\end{theorem}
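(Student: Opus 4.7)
Our family of forbidden rooted graphs $\c{F}$ will be the union of $\{(F,R)\}$ together with the families witnessing the $d$-admissibility of each proper induced subgraph of $H$ and of each component of $F[R]$; by hypothesis all of these rooted graphs have rooted density at least $d$. Fix $\pow\in\mathbb{N}$ and let $G$ be an $n$-vertex graph avoiding $(F',R')^\pow$ for every $(F',R')\in \c{F}$. It suffices to show $\mon(H,G) = O_\pow(n^{v(H)-e(H)/d})$.

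The first step is to upper bound $\mon(F,G)$. Since each component $C$ of $F[R]$ is $d$-admissible and $G$ avoids the associated rooted powers, \Cref{observation monomorphisms same as copies} gives $\mon(C,G) = O_\pow(n^{v(C)-e(C)/d})$, and multiplying over the components of $F[R]$ yields
\[
\mon(F[R],G) \;=\; O_\pow\bigl(n^{|R|-e(F[R])/d}\bigr).
\]
On the other hand, $(F,R)^\pow$-freeness of $G$ implies that each injection $R\hookrightarrow V(G)$ is the restriction to $R$ of at most $O_\pow(1)$ monomorphisms $F \to G$, since otherwise $\pow$ distinct copies of $F$ would share the common rooted image and realize a copy of $(F,R)^\pow$. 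Combining,
\[
\mon(F,G) \;=\; O_\pow\bigl(n^{|R|-e(F[R])/d}\bigr).
\]

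The second step is a matching lower bound of the form $\mon(F,G) \ge c\,\mon(H,G)^{t}/n^{\mathrm{corr}}$ for an appropriate correction exponent. Using the $H$-tree witness $(h_1,j_2,h_2,\ldots,h_t)$, every monomorphism $\psi: F \to G$ is built by first choosing $\psi\circ h_1 \in \Mon(H,G)$ and then, for $i=2,\dots,t$, choosing images for $B_i := V(H_i)\sm V(H_{j_i})$ so that $\psi|_{V(H_i)}$ is a monomorphism of $H$ into $G$ extending the already-determined map on $A_i := V(H_i) \cap V(H_{j_i})$. Counting pairs in two ways and applying a Cauchy--Schwarz / Jensen-type averaging to the link-extension counts at each step, while using the $d$-admissibility of proper induced subgraphs of $H$ to absorb those partial maps which fail to extend injectively or force a collision, should produce the desired lower bound.

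Chaining the two bounds and using the balanced identity $e(F) = d(v(F)-|R|)$ (together with the corresponding bookkeeping for $v(F),\,|R|,\,e(F[R])$ coming from the $H$-tree decomposition) causes the exponents to line up, yielding $\mon(H,G) = O_\pow(n^{v(H)-e(H)/d})$ and hence the $d$-admissibility of $H$. The main obstacle I anticipate is the lower-bound step: the link-extension counts vary substantially across choices of the starting $\phi\in\Mon(H,G)$, and making the convexity inequality yield exactly the right exponent requires sharp control. This is precisely where the hypothesis $d \ge \Delta(H)$ enters, bounding the effective degree contribution at each stage of the greedy extension, and where the $d$-admissibility of proper induced subgraphs of $H$ is indispensable, allowing one to discard the degenerate configurations in which an extension step fails to produce a genuine copy of $H$ in $G$.
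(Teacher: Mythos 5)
Your proposal has the right skeleton and matches the paper's strategy in outline: choose $\c{F}$ to be $(F,R)$ together with the witnessing families for the proper induced subgraphs of $H$ and the components of $F[R]$, upper bound $\mon(F[R],G)$ (and hence $\mon(F,G)$) using $d$-admissibility of those components and $(F,R)^\pow$-freeness, and lower bound $\mon(F,G)$ in terms of $\mon(H,G)$ so that the two bounds force $\mon(H,G)=O_\pow(n^{v(H)-e(H)/d})$. The exponent bookkeeping you sketch does close up, once one is careful that the balance condition in the form actually used in the paper is $\frac{e(F)-e(F[R])}{v(F)-|R|}=d$ (equivalently, the density of the full unrooted set equals $d$); using that relation, the correction exponents produced by $d$-admissibility of the $H'$'s exactly cancel and leave $v(H)-e(H)/d$.

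The genuine gap is exactly where you flag it: the lower bound $\mon(F,G)\gtrsim \mon(H,G)^{t}/\prod_{H'}\mon(H',G)^{\gamma(H')}$. A naive iterated Cauchy--Schwarz/Jensen applied to the extension counts does produce a quantity of the right \emph{order} for the sequence of maps $(\phi_1,\dots,\phi_t)$ that glue correctly along the $H^*_i$'s, but it overcounts: it makes no distinction between gluings that are injective (and hence give an actual monomorphism of $F$ via \Cref{lemma copy implies mono}) and gluings where two of the $\phi_i$'s collide off their prescribed intersection. Discarding the collisions is not something $d$-admissibility of the $H'$'s hands you directly; it only gives the bounds $\mon(H',G)=O(n^{v(H')}p^{e(H')})$. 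What is needed is a way to guarantee that for \emph{every} partial map you encounter mid-construction, there are many extensions disjoint from a bounded forbidden set. That is precisely what the paper's weak-builder/strong-builder machinery (Lemmas~\ref{lemma weak builder}--\ref{lemma counting trees in a strong builder}) delivers: $\Mon(H,G)$ is first trimmed (using those very $\mon(H',G)$ bounds) to a subcollection $\Phi$ in which every restriction has large degree, and then $d\geq\Delta(H)$ is used to show large degree survives deletion of up to $v(F)$ vertices, so that each gluing step has $\beta(H^*_i)$ clean (collision-free) choices. Your phrase ``Jensen-type averaging ... to absorb those partial maps which fail to extend injectively'' names the right obstruction but does not actually resolve it; a convexity inequality over the raw degree distribution does not, by itself, separate the good extensions from the bad ones. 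So as written the proof is incomplete at its crucial counting step, and filling it in essentially requires reconstructing the builder framework (or an equivalent trimming argument).

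One smaller point: you write the correction as $n^{\mathrm{corr}}$, but the denominator must track $\prod_{H'}\mon(H',G)^{\gamma(H')}$, which involves both $n$ and $p=Cn^{-1/d}$ powers; if you phrase the correction purely in $n$ you will lose the cancellation coming from the balance identity. Otherwise your upper bound on $\mon(F,G)$ via $(F,R)^\pow$-freeness and multiplicativity of $\mon(F[R],\cdot)$ over components is sound (modulo the $v(F[R])!$ and $\mathrm{aut}(F)$ factors, which the paper also absorbs into the $O_\pow(1)$).
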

	We emphasize that \Cref{theorem d large tree implies dadmissible} reduces the a priori complicated problem of showing that a given (large) rational $v(H)-e(H)/d$ is realizable to the purely combinatorial problem of constructing some fixed rooted graph $(F,R)$ with a few special properties.  \Cref{theorem d large tree implies dadmissible} in fact holds even without the assumption that $H$ is connected, but there is no need to consider disconnected graphs due to \Cref{observation disjoint union of dadmissible}.
	
	We prove \Cref{theorem d large tree implies dadmissible} in Subsection~\ref{subsection proof admissible}, after which we use \Cref{theorem d large tree implies dadmissible} in Subsection~\ref{subsection building H-trees} to prove our main results for realizable exponents.  These two subsections are self-contained and can be read independently of each other.  In what follows we will make use of the following notation.
	
	\begin{definition}
		Given a graph $H$, we let $2^H$ denote the set of induced subgraphs of $H$. 
	\end{definition}
	
	In particular, this means $2^H\sm \{H\}$ is shorthand for the set of proper induced subgraphs of $H$. 
	
	We close with a simple observation that will be the base case of most of our inductive arguments. 
	
	\begin{observation}\label{observation K1 is easy}
		The graph $K_1$ is $d$-admissible for all rationals $d>0$.
	\end{observation}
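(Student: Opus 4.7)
The plan is to exhibit a single rooted graph of rooted density at least $d$ and observe that the required upper bound for $K_1$ is automatic. Since $v(K_1)=1$ and $e(K_1)=0$, the target exponent is $v(K_1)-e(K_1)/d=1$, and because any $n$-vertex graph contains exactly $n$ copies of $K_1$, the inequality $\mathrm{ex}(n,K_1,\c{G})\le n=O(n^{v(K_1)-e(K_1)/d})$ holds for any family $\c{G}$ whatsoever. Consequently, the only real content of the statement is the existence of a finite set of rooted graphs each of rooted density at least $d$.

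To produce such a rooted graph, I will fix an integer $k\ge d$, let $F=K_{1,k}$, and take $R$ to be the set of $k$ leaves, leaving the center $v$ as the unique unrooted vertex. Since $R\subsetneq V(F)$, this is a legitimate rooted graph. The only nonempty $S\sub V(F)\sm R$ is $S=\{v\}$, for which $\mathrm{inc}(S)=k$, so the rooted density of $(F,R)$ is exactly $k\ge d$.

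Combining these two facts, $K_1$ is $d$-admissible with respect to the singleton $\{(F,R)\}$ for every rational $d>0$. There is no genuine obstacle here: the statement is a triviality intended to serve as the base case for the later inductive arguments built on \Cref{theorem d large tree implies dadmissible} and \Cref{observation disjoint union of dadmissible}.
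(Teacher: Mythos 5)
Your proof is correct and takes essentially the same approach as the paper: the paper's proof simply observes that any nonempty rooted graph of density at least $d$ satisfies $\ex(n,K_1,(F,R)^\pow)=n=O(n^{v(K_1)-e(K_1)/d})$, leaving the existence of such a rooted graph implicit, whereas you additionally supply the concrete example of a star rooted at its leaves. Both amount to the same one-line observation.
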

	
	Indeed, any non-empty rooted graph $(F,R)$ of rooted density at least $d$ satisfies $\ex(n,K_1,(F,R)^\pow)=n=O(n^{v(K_1)-e(K_1)/d})$.

	\subsection{Proof of \texorpdfstring{\Cref{theorem d large tree implies dadmissible}}{Theorem 4.3}}\label{subsection proof admissible}
	
	Let us start by discussing the high level proof ideas.  Ultimately, we will show that the graphs $H$ as in \Cref{theorem d large tree implies dadmissible} are $d$-admissible with respect to a family of rooted graphs of the form $(F,R)\cup \c{F}$ where $(F,R)$ is a suitably chosen rooted $H$-tree and $\c{F}$ is a set of rooted graphs for which every proper induced subgraph of $H$ and every component of $F[R]$ is $d$-admissible with respect to $\c{F}$.  To do this, we will show that if $G$ is a graph which avoids all $\pow$-th powers of graphs in $\c{F}$ and has many copies of $H$, then $G$ contains a $q$-th power of $(F,R)$, implying that graphs avoiding all $q$-th powers of graphs in $(F,R)\cup \c{F}$ can not contain many copies of $H$, proving that $H$ is $d$-admissible with respect to this family.
	
	We use a pigeonhole argument to show that graphs $G$ as above contain an element of $(F,R)^\pow$.  Specifically, we show that $G$ both contains few copies of $F[R]$ while also containing many copies of $F$, which means some $\pow$ of these copies of $F$ must agree on $R$.  Showing that the number of copies of $F[R]$ is small follows almost immediately from $G$ avoiding all $\pow$-th powers of graphs in $\c{F}$ and the fact that each component of $F[R]$ is $d$-admissible with respect to $\c{F}$.  The bulk of the work then lies in showing that $G$ contains many copies of the $H$-tree $F$.  In order to do this, we use the fact that $G$ has many copies of $H$ to show that there exists a large collection  of ``nice'' monomorphisms $\Phi$ of $H$ into $G$ which we refer to as a strong builder.  This strong builder is defined in such a way that we can glue together monomorphisms in $\Phi$ in many ways to form a copy of our $H$-tree $F$, with this process being analogous to the standard way one shows that $n$-vertex graphs with minimum degree $\delta$ contain at least $\Omega(\delta^en)$ copies of any fixed tree with $e$ edges. The framework we use to do this is a generalization of the framework developed in \cite{EHK2025} for cliques.
	
	With the above all in mind, our first goal will be to show that ``strong builders'' as described above exist in the graphs $G$ that we care about, after which we show that such strong builders do indeed correspond to the existence of many copies of $F$.  We begin with some definitions.

	\begin{definition}
		Let $H$ and $G$ be graphs and let $\Phi$ be a collection of monomorphisms from $H$ to $G$. Given an induced subgraph $H'\subseteq H$, we write $\Phi_{H'}$ to denote the collection $\{\phi|_{V(H')}: \phi\in \Phi\}$, i.e.\ $\Phi_{H'}$ is the set of restrictions of maps in $\Phi$ to $H'$. Given $\psi\in \Phi_{H'}$, the \textbf{degree} of $\psi$, denoted $\deg_\Phi(\psi)$, is the number of $\phi\in\Phi$ such that $\phi|_{V(H')}=\psi$.  We will denote the degree $\deg_\Phi(\phi)$ simply by $\deg(\phi)$ whenever $\Phi$ is clear from context. Given a set $S\subseteq V(G)$, let $\Phi-S$ denote the subset of $\Phi$ containing the monomorphisms $\phi$ with $\phi(V(H))\subseteq V(G)\setminus S$.
	\end{definition}
	
	\begin{definition}
		Let $H$ be a graph and $\beta:2^H\setminus \{H\}\to \mathbb{R}$ a real-valued function on the proper induced subgraphs of $H$.  We say that a collection of monomorphisms $\Phi$ from $H$ to a graph $G$ is a \textbf{$(H,\beta)$-weak builder} if for every $H'\in 2^H\setminus\{H\}$ and every $\psi\in \Phi_{H'}$, we have $\deg(\psi)\geq \beta(H')$. Given $s\in\mathbb{N}$, we say that $\Phi$ is a \textbf{$(H,\beta,s)$-strong builder} if both $\Phi-S$ is a $(H,\beta)$-weak builder and $\Phi_{H'}-S\subseteq (\Phi-S)_{H'}$ for all $S\subseteq V(G)$ with $|S|\leq s$ and $H'\in 2^H$ (i.e. if every $\psi\in \Phi_{H'}-S$, has at least one $\phi\in \Phi-S$ with $\phi|_{V(H)}=\psi$).
	\end{definition}
	
	Intuitively, $\Phi$ is a strong builder if there exist many ways of ``extending'' the restriction of any map $\phi\in \Phi$ even if there is a small set of vertices $S$ that these extensions are required to avoid.  We begin by showing that if there exists a large and ``well-controlled'' collection of monomorphisms $\Phi$, then this can be trimmed into a slightly smaller collection with a $(H,\beta)$-weak builder for some appropriate choice of $\beta$.  

	\begin{lemma}\label{lemma weak builder}
		Let $H$ and $G$ be graphs with $v(G)=n$, let $p$ be a real number, and let $\Phi$ be a collection of monomorphisms from $H$ to $G$. If
		\begin{itemize}
			\item $|\Phi|\geq 2^{v(H)+1}n^{v(H)}p^{e(H)}$, and
			\item $|\Phi_{H'}|\leq n^{v(H')}p^{e(H')}$ for all $H'\in 2^H\setminus\{H\}$,
		\end{itemize}
		then there exists a subcollection $\Phi'\subseteq \Phi$ such that
		\begin{itemize}
			\item $|\Phi'|\geq n^{v(H)}p^{e(H)}$, and
			\item For each $H'\in 2^H\setminus\{H\}$ and $\psi\in\Phi'_{H'}$, we have $\deg_{\Phi'}(\psi)\geq 2n^{v(H)-v(H')}p^{e(H)-e(H')}$.
		\end{itemize}
	\end{lemma}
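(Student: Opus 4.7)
The plan is to extract $\Phi'$ from $\Phi$ by an iterative pruning procedure. Set $\Phi^{(0)}=\Phi$, and at step $i$ search for some proper induced subgraph $H'\in 2^H\setminus\{H\}$ together with a map $\psi\in \Phi^{(i)}_{H'}$ whose current degree satisfies $\deg_{\Phi^{(i)}}(\psi)<2n^{v(H)-v(H')}p^{e(H)-e(H')}$; if such a pair exists, form $\Phi^{(i+1)}$ by deleting from $\Phi^{(i)}$ every $\phi$ with $\phi|_{V(H')}=\psi$. Since $\Phi$ is finite, the process halts at some $\Phi'$, and by construction every $\psi\in\Phi'_{H'}$ satisfies the required degree lower bound for each $H'\in 2^H\setminus\{H\}$, yielding the second bullet of the conclusion.

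The only real work is to show that $\Phi'$ is still large, for which I will bound the total number of deletions. The key observation is that for each fixed $H'$, any given $\psi$ can trigger a deletion step at most once: once it triggers, no $\phi$ restricting to $\psi$ remains, so $\psi$ permanently leaves $\Phi^{(j)}_{H'}$ for all subsequent $j$. Since $\Phi^{(i)}_{H'}\subseteq\Phi_{H'}$ for every $i$, the total number of $\psi$-triggered steps associated to $H'$ is at most $|\Phi_{H'}|\leq n^{v(H')}p^{e(H')}$ by hypothesis. Each such step removes fewer than $2n^{v(H)-v(H')}p^{e(H)-e(H')}$ monomorphisms, so the total count of deletions attributable to $H'$ is at most $2n^{v(H)}p^{e(H)}$.

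Summing over the at most $2^{v(H)}-1$ proper induced subgraphs of $H$ yields a total loss of at most $(2^{v(H)+1}-2)n^{v(H)}p^{e(H)}$ monomorphisms. Combining with the hypothesis on $|\Phi|$ gives
\[|\Phi'|\;\geq\;2^{v(H)+1}n^{v(H)}p^{e(H)}-(2^{v(H)+1}-2)n^{v(H)}p^{e(H)}\;=\;2n^{v(H)}p^{e(H)}\;\geq\;n^{v(H)}p^{e(H)},\]
which is the first bullet. I do not anticipate any substantive obstacle here; the main subtlety is simply to apply the bound $|\Phi_{H'}|\leq n^{v(H')}p^{e(H')}$ uniformly throughout the entire run rather than only at the start, which is legitimate since every $\Phi^{(i)}_{H'}$ sits inside $\Phi_{H'}$, and to avoid double-counting by tying each deletion step to the unique pair $(H',\psi)$ that triggered it.
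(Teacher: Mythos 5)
Your proof is correct and follows essentially the same iterative pruning argument as the paper: repeatedly delete the set of extensions of any low-degree restriction $\psi$, note that each $\psi$ can trigger at most once, bound the per-$H'$ losses by $|\Phi_{H'}|\cdot 2n^{v(H)-v(H')}p^{e(H)-e(H')}$, and sum over proper induced subgraphs. Your explicit observation that $\Phi^{(i)}_{H'}\subseteq\Phi_{H'}$ throughout the run makes precise a point the paper leaves implicit, but the proof is otherwise identical.
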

	\begin{proof}
		Iteratively, as long as there exists some $H'\in 2^H\setminus\{H\}$ and $\psi\in \Phi_{H'}$ satisfying $\deg_{\Phi}(\psi)<2p^{e(H)-e(H')}n^{v(H)-v(H')}$, we remove from $\Phi$ any monomorphisms $\phi$ with $\phi|_{V(H')}=\psi$. Let $\Phi'$ denote the resulting set of monomorphisms after this removal process is done.  It remains only to show that $|\Phi'|\ge p^{e(H)} n^{v(H)}$.
		
		Observe that for a fixed $H'\in 2^H\setminus\{H\}$, if some $\psi\in \Phi_{H'}$ had too low of degree, then we removed at most $2p^{e(H)-e(H')}n^{v(H)-v(H')}$ monomorphisms which restrict to $\psi$ from $\Phi$.  As such, the total number of monomorphisms that could have been removed going from $\Phi$ to $\Phi'$ is at most 
		\[
		\sum_{H'\in 2^H\sm \{H\}} 2p^{e(H)-e(H')}n^{v(H)-v(H')}\cdot |\Phi_{H'}|.
		\]
		By our hypothesis we have $|\Phi_{H'}|\le p^{e(H')} n^{v(H')}$ for all $H'$. Combining this with the expression above gives that the number of $\psi$ removed from $\Phi$ is at most $(2^{v(H)}-1)2p^{e(H)}n^{v(H)}$, so we conclude that $\Phi'$ has the desired size.
	\end{proof}
	
	We next show that weak builders with $\beta$ sufficiently large are also strong builders with slightly weaker parameters.  Here and in what follows, for a real-valued function $\beta$ and real number $c$, we abuse notation slightly by writing $c\beta$ to denote the function which maps each element $x$ of the domain of $\beta$ to $c\cdot \beta(x)$.
	
	\begin{lemma}\label{lemma strong builder}
		If $\Phi$ is a $(H,\beta)$-weak builder with $\beta(H-\hat{w})\geq \frac{v(H)}{1-2^{-1/s}}+s$ for all $\hat{w}\in V(H)$, then $\Phi$ is also a $(H,\frac{1}{2}\beta,s)$-strong builder.
	\end{lemma}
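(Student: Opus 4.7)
The plan is to establish a single multiplicative degree estimate: for every $S\subseteq V(G)$ with $|S|\le s$, every $H'\in 2^H\setminus\{H\}$, and every $\psi\in\Phi_{H'}$ with $\psi(V(H'))\cap S=\emptyset$, we have $\deg_{\Phi-S}(\psi)\ge 2^{-|S|/s}\deg_\Phi(\psi)$. Since the weak builder hypothesis gives $\deg_\Phi(\psi)\ge \beta(H')$, for $|S|\le s$ this bound is at least $\tfrac12\beta(H')>0$, which delivers both the $(H,\tfrac12\beta)$-weak builder property for $\Phi-S$ and (by positivity of the degree) the inclusion $\Phi_{H'}-S\subseteq (\Phi-S)_{H'}$. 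So the entire lemma reduces to proving this one degree estimate.

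I would establish the estimate by iterating a single-vertex removal. Concretely, I would show that for any $S'\subset V(G)$ and $v\in V(G)\setminus S'$ with $\psi(V(H'))\cap(S'\cup\{v\})=\emptyset$,
\[
\deg_{\Phi-(S'\cup\{v\})}(\psi)\ge 2^{-1/s}\,\deg_{\Phi-S'}(\psi),
\]
and compound this bound over the at most $s$ elements of $S$ to pick up the factor $2^{-|S|/s}\ge \tfrac12$.

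To prove the one-vertex step, observe that $\deg_{\Phi-S'}(\psi)-\deg_{\Phi-(S'\cup\{v\})}(\psi)$ counts the $\phi\in\Phi-S'$ extending $\psi$ with $v\in\phi(V(H))$; since $v\notin\psi(V(H'))$, each such $\phi$ must satisfy $\phi(\hat w)=v$ for some $\hat w\in V(H)\setminus V(H')$. A union bound over $\hat w$ reduces the task to bounding, for each fixed $\hat w$, the quantity $|\{\phi\in\Phi-S':\phi|_{V(H')}=\psi,\ \phi(\hat w)=v\}|$. I would partition the set $\{\phi\in\Phi-S':\phi|_{V(H')}=\psi\}$ by the restriction $\phi|_{V(H)\setminus\{\hat w\}}$; each class contributes at most $1$ to the count (since $\phi(\hat w)=v$ determines $\phi$ within the class), while the weak builder property applied to the subgraph $H-\hat w$ guarantees that any class representative $\phi'$ has at least $\beta(H-\hat w)$ extensions in $\Phi$, of which at most $|S'|\le s$ are removed in passing to $\Phi-S'$. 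Hence each class has size at least $\beta(H-\hat w)-s$, so the number of classes (and thus our count) is at most $\deg_{\Phi-S'}(\psi)/(\beta(H-\hat w)-s)$. Summing over the at most $v(H)$ choices of $\hat w$ and invoking the hypothesis $\beta(H-\hat w)\ge v(H)/(1-2^{-1/s})+s$ gives the clean bound $(1-2^{-1/s})\deg_{\Phi-S'}(\psi)$, which rearranges to the desired single-step decay.

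The main obstacle is conceptual rather than computational: one has to spot the right one-vertex-at-a-time decomposition and recognize that the unusual constant $v(H)/(1-2^{-1/s})+s$ is calibrated precisely so that each of the $s$ steps loses exactly a factor of $2^{-1/s}$, compounding to the $\tfrac12$ in the conclusion. After that, the work is essentially a refined union bound that uses the weak builder property on the single-vertex-deleted subgraphs $H-\hat w$ to control how many extensions of $\psi$ can be killed by a single excluded vertex $v$.
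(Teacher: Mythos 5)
Your proof is correct, and it is a genuinely different argument from the paper's. The paper splits the lemma into two parts that it treats with separate techniques: it proves $\Phi_{H'}-S\subseteq(\Phi-S)_{H'}$ via an extremal/minimality argument (take the extension $\phi$ of $\psi$ with $|\phi(V(H))\cap S|$ minimal, then use $\beta(H-\hat w)>s$ to push that intersection down, deriving a contradiction), and separately proves the weak-builder degree bound for $\Phi-a$ by a case split — either few extensions of $\psi$ hit $a$, or many do, in which case a pigeonhole over $\hat w$ followed by a two-step ``choose $\xi\in\Lambda^{\hat v}_{H-\hat v}$, then choose an extension in $\Phi-a$'' multiplication gives a large lower bound directly. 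You instead establish a single multiplicative decay estimate $\deg_{\Phi-S}(\psi)\ge 2^{-|S|/s}\deg_\Phi(\psi)$ from which both conclusions fall out simultaneously (positivity gives the restriction inclusion for free), and you prove the one-vertex step by bounding the \emph{loss} rather than constructing a large surviving set: you partition the extensions of $\psi$ in $\Phi-S'$ into classes by their restriction to $V(H)\setminus\{\hat w\}$, note each class has at most one member sending $\hat w\mapsto v$, lower-bound the class size by $\beta(H-\hat w)-s$ via the weak-builder property on $H-\hat w$, and sum over $\hat w$. Your approach is arguably cleaner: it avoids the case split, handles both defining conditions of a strong builder in one stroke, and makes the role of the constant $v(H)/(1-2^{-1/s})+s$ transparent as exactly what is needed for each of the $s$ deletions to cost a factor of $2^{-1/s}$. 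Both arguments hinge on the same underlying fact — the weak-builder property applied to $H-\hat w$ forces any single vertex to lie in few extensions of a given partial map relative to the total — but the packaging is different enough that this qualifies as an independent proof.
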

	
	\begin{proof}
		We first show that for any set $S\sub V(G)$ of cardinality at most $s$ and any $H'\in 2^H$ that $\Phi_{H'}-S\subseteq (\Phi-S)_{H'}$.   If $H'=H$, then $\Phi_{H'}-S=\Phi-S=(\Phi-S)_{H'}$, so we may assume $H'\neq H$ from now on.  Let $\psi\in \Phi_{H'}-S$ and let $\phi\in \Phi$ be a map with $\phi|_{V(H')}=\psi$ such that $|\phi(V(H))\cap S|$ is as small as possible.  If $\phi\in \Phi-S$ (i.e.\ if $|\phi(V(H))\cap S|=0$) then we are done, so we may assume there exists some $\hat{w}\in V(H)$ with $\phi(\hat{w})=a$ for some $a\in S$.  Note that we must have $\hat{w}\notin V(H')$ since $\phi$ extends $\psi\in \Phi_{H'}-S$.  Let $H'':=H-\hat{w}$ and $\xi:=\phi|_{V(H'')}$, noting that $\xi|_{V(H')}=\psi$ since $\hat{w}\notin V(H')$.  Because $\beta(H-\hat{w})>s$, there exist at least $s+1$ extensions of $\xi$ in $\Phi$, and in particular there must exist some $\phi'\in \Phi$ with $\phi'(\hat{w})\notin S$.  In this case $\phi'$ (being an extension of $\xi$) is an extension of $\psi$ with $|\phi'(V(H))\cap S|=|\phi(V(H))\cap S|-1$, a contradiction to how we chose $\phi$.  We conclude that $\phi\in \Phi-S$, proving  that $\Phi_{H'}-S\subseteq (\Phi-S)_{H'}$.
		
		To prove that $\Phi$ is a $(H,\frac{1}{2}\beta,s)$-strong builder it only remains to show that $\Phi-S$ is a $(H,\frac{1}{2}\beta)$-weak builder for all sets $S$ of size at most $s$.  To prove this, consider any $a\in V(G)$ and let $\eta:=1-2^{-1/s}$, so that $(1-\eta)^s=\frac{1}{2}$. We will prove that $\Phi-a$ is a $(H,(1-\eta)\beta)$-weak builder, from which $\Phi-S$ being a $(H,\frac{1}{2}\beta)$-weak builder  will follow by iterating this a total of $s$ times.  
		
		Let $H'\in 2^H\setminus\{H\}$. Consider some $\psi\in (\Phi-a)_{H'}$, which we recall in our notation means that $\psi$ is the restriction of some $\phi\in \Phi$ which does not contain $a$ in its image to $V(H')$.  Our goal is to show that $\deg_{\Phi-a}(\psi)\geq (1-\eta) \beta(H')$, which will give the desired result mentioned above.
		
		If there exist at most $\eta \beta(H')$ full extensions of $\psi$ in $\Phi$ which include $a$ in their image, then we are done since $\deg_\Phi(\psi)\ge \beta(H')$ by hypothesis. If not, then there exists some collection $\Lambda\subseteq \Phi$ of at least $\eta \beta(H')$ monomorphisms from $H$ to $G$ such that every $\phi\in \Lambda$ is an extension of $\psi$ that contains $a$ in its image, i.e.\ such that $\phi|_{V(H')}=\psi$ and $a\in \phi(V(H))$.
		
		For each $\hat{w}\in V(H)\setminus V(H')$, let $\Lambda^{\hat{w}}\subseteq \Lambda$ denote the extensions $\phi$ such that $\phi(\hat{w})=a$, noting by the Pigeonhole Principle there must exist some $\hat{v}\in V(H)\setminus V(H')$ such that $|\Lambda^{\hat{v}}|\geq \eta \beta(H')/v(H)$. Note that every $\phi\in\Lambda^{\hat{v}}$ has a restriction in $\Lambda^{\hat{v}}_{H-\hat{v}}$, and each restriction uniquely extends back to $\Lambda^{\hat{v}}$ since the extension must send $\hat{v}\mapsto a$, so $|\Lambda^{\hat{v}}_{H-\hat{v}}|=|\Lambda^{\hat{v}}|$. Furthermore, since $\Lambda^{\hat{v}}_{H-\hat{v}}\subseteq \Phi_{H-\hat{v}}$, we have that $\deg_{\Phi}(\xi)\geq \beta(H-\hat{v})$ for all $\xi\in \Lambda^{\hat{v}}_{H-\hat{v}}$, and in particular, $\deg_{\Phi-a}(\xi)\geq \beta(H-\hat{v})-1$.
		
		Now, we can lower bound the degree of $\psi$ in $\Phi-a$. We first choose some $\xi\in \Lambda^{\hat{v}}_{H-\hat{v}}\sub (\Phi-a)_{H-\hat{v}}$, then choose an extension $\phi$ of $\xi$ in $\Phi-a$. Note that each $\phi$ we construct in this way is distinct by our previous discussion and is an extension of $\psi$ in $\Phi-a$ by the definition of $\Lambda$, implying that 
		\[
		\deg_{\Phi-a}(\psi)\geq |\Lambda^{\hat{v}}_{H-\hat{v}}|\cdot \deg_{\Phi-a}(\xi)\geq \frac{\eta \beta(H')}{v(H)}(\beta(H-\hat{v})-1)\geq \beta(H')>(1-\eta)\beta(H'),
		\]
		where the second to last inequlaity used the hypothesis on $\beta$ from the lemma.
		We conclude that, $\Phi-a$ is a $(H,(1-\eta)\beta)$-weak builder, completing the proof.
	\end{proof}
	
	We next aim to show that strong builders imply the existence of many copies of a given $H$-tree.  For this we need the following definitions, where here we recall the definition of a sequence $(h_1,j_2,\ldots,h_t)$ being a witness of an $H$-tree made at the end of \Cref{definition H-tree}. 
	\begin{definition}
		We say that a witness $(h_1,j_2,h_2,j_3,\dots,h_t)$ of an $H$-tree $F$ is of \textbf{length} $t$ and \textbf{type} $\gamma$ where $\gamma:2^H\sm \{H\}\to\{0,1,\ldots,t-1\}$ is the function with $\gamma(H')$ defined to be the number of $i$ such that 
		\[
		H'=H[h_i^{-1}(V(H_i)\cap V(H_{j_i}))]
		\] 
		as $i$ ranges from $2$ to $t$. 
		
		Given a collection $\Phi$ of monomorphisms of $H$ into some graph $G$, we say that $(\phi_1,\phi_2,\dots,\phi_t)$ with $\phi_i\in \Phi$ for all $i$ is a \textbf{lift} of the witness $(h_1,j_2,h_2,j_3,\dots,h_t)$ into $\Phi$ if for all $\hat{u},\hat{v}\in V(H)$ we have $\phi_i(\hat{u})=\phi_j(\hat{v})$ if and only if $h_i(\hat{u})=h_j(\hat{v})$. The \textbf{image} of the lift $(\phi_1,\phi_2,\dots,\phi_t)$ in $G$ is the set $\bigcup_{i=1}^t \phi_i(V(H))$.
	\end{definition}

	As a point of reference, the definition of $H$-trees can be shown to imply that $h_i(\hat{u})=h_j(\hat{v})$ only happens if $\hat{u}=\hat{v}$.  However, it will be convenient to allow for the a priori possibility of having $\hat{u}\ne \hat{v}$ in the definition of lifts of witnesses.  In particular, we use this in the following result which says that lifts imply the existence of copies of $F$.
	
	\begin{lemma}\label{lemma copy implies mono}
		Let $G$ be a graph and $\Phi\subseteq \mathrm{Mon}(F,G)$. Let $F$ be a $H$-tree with defining copies $\{H_1,H_2,\dots,H_t\}$ and witness $W:=(h_1,j_2,\ldots,h_t)$. If $P:=(\phi_1,\ldots,\phi_t)$ is a lift of the witness $W$ into $\Phi$, then there exists a monomorphism from $F$ to $G$ with image $\displaystyle\bigcup_{i=1}^t \phi_i(V(H))$.
	\end{lemma}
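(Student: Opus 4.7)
The natural candidate for the desired monomorphism is the map $\Psi : V(F) \to V(G)$ defined by
\[
\Psi(v) := \phi_i(h_i^{-1}(v)) \quad \text{for any } i \text{ with } v \in V(H_i).
\]
The plan is to verify that $\Psi$ (i)~is well-defined, (ii)~is injective, (iii)~sends edges of $F$ to edges of $G$, and (iv)~has the prescribed image; the ``only if'' and ``if'' directions of the lift condition will furnish (i) and (ii) respectively, while (iii) reduces to each $\phi_i$ being a monomorphism and (iv) is immediate from the definition.

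First I would check well-definedness. Suppose $v \in V(H_i)\cap V(H_j)$ and set $\hat u := h_i^{-1}(v)$, $\hat w := h_j^{-1}(v)$, so that $h_i(\hat u) = v = h_j(\hat w)$. The ``if'' direction of the lift condition immediately gives $\phi_i(\hat u) = \phi_j(\hat w)$, which is exactly what is needed. Note that the structural property (B) of $H$-trees is not needed for this step: the lift condition already encodes the required compatibility between the $\phi_i$'s.

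Next I would verify injectivity. Suppose $\Psi(v) = \Psi(v')$ with $v \in V(H_i)$ and $v' \in V(H_j)$, and let $\hat u := h_i^{-1}(v)$, $\hat w := h_j^{-1}(v')$. Then $\phi_i(\hat u) = \phi_j(\hat w)$, so the ``only if'' direction of the lift condition yields $v = h_i(\hat u) = h_j(\hat w) = v'$. For the homomorphism property, any edge $e = vv' \in E(F)$ lies in some $V(H_i)$ by condition (A) of \Cref{definition H-tree}, so $h_i^{-1}(v)h_i^{-1}(v') \in E(H)$. Since $\phi_i$ is a monomorphism from $H$ to $G$, this edge is mapped to an edge of $G$, namely $\Psi(v)\Psi(v')$.

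Finally, the image of $\Psi$ is $\bigcup_{i=1}^t \phi_i(h_i^{-1}(V(H_i))) = \bigcup_{i=1}^t \phi_i(V(H))$ since each $h_i$ is a bijection onto $V(H_i)$. I do not anticipate any serious obstacle here: the lift condition is tailor-made so that the three routine verifications go through, and the only care needed is to apply the ``iff'' clause of the lift definition in both directions, once for well-definedness and once for injectivity.
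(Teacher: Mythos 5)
Your proposal is correct and matches the paper's proof essentially line for line: you define the same candidate map $\rho(u)=\phi_i(h_i^{-1}(u))$, verify well-definedness via the ``if'' direction of the lift condition, injectivity via the ``only if'' direction, the homomorphism property by noting each edge of $F$ lies in some $H_i$ and $\phi_i\circ h_i^{-1}$ is a composition of homomorphisms, and read off the image directly. Your side remark that condition (B) of the $H$-tree definition is not used here is accurate and consistent with the paper's argument.
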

	
	\begin{proof}
		Let $\rho:V(F)\to V(G)$ be defined by setting $\rho(u)=\phi_i(h_i^{-1}(u))$ for each $u\in V(H_i)$ and $i\in [t]$. Since $V(F)=\bigcup_{i=1}^t V(H_i)$, every vertex $u\in V(F)$ is assigned at least one image by $\rho$. The map $\rho$ is well defined since if we have $u\in V(H_i)\cap V(H_j)$ for some $i\neq j$, then there exists some $\hat{v},\hat{w}\in V(H)$ such that $u=h_i(\hat{v})=h_j(\hat{w})$, but then by the definition of a lift into $\Phi$ we have $\phi_i(\hat{v})=\phi_j(\hat{w})$. In particular $\phi_i(h_i^{-1}(u))=\phi_j(h_j^{-1}(u))$. 
		
		The map $\rho$ has image $\bigcup_{i=1}^t \phi_i(V(H))$ by definition, and in fact $\rho$ maps injectively since if $u\in V(H_i)$ and $v\in V(H_j)$ with $\rho(u)=\rho(v)$, then $\phi_i(h_i^{-1}(u))=\phi_j(h_j^{-1}(v))$, and thus by the definition of a lift into $\Phi$ we have \[u=h_i(h_i^{-1}(u))=h_j(h_j^{-1}(v))=v.\]  Finally, we note that $\rho|_{V(H_i)}=\phi_i\circ h_i^{-1}$ is a composition of homomorphisms, and thus maps edges of $H_i$ to edges of $G$. Since every edge of $F$ is in some $H_i$, the map $\rho$ is a homomorphism.
	\end{proof}
	We now prove the main fact we need about lifts of witnesses.
	
	\begin{lemma}\label{lemma counting trees in a strong builder}
		Let $F$ be an $s$-vertex $H$-tree with a witness $(h_1,j_1,\dots,h_t)$ of type $\gamma$, and let $\Phi$ be an $(H,\beta,s')$-strong builder for some graph $G$ with $s'\geq s$. Then the number of lifts of $(h_1,j_1,\dots,h_t)$ into $\Phi$ is at least
		\[
		|\Phi|\cdot\prod_{H'\in 2^H} \beta(H')^{\gamma(H')}.
		\]
	\end{lemma}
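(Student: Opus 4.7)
The plan is to construct lifts coordinate-by-coordinate, inducting on $i$ from $1$ to $t$ and showing that at each step there are at least $\beta(H'_i)$ valid choices for $\phi_i$, where $H'_i := H[h_i^{-1}(V(H_i)\cap V(H_{j_i}))]$ for $i\ge 2$. First pick $\phi_1\in\Phi$ arbitrarily (contributing the factor $|\Phi|$). Then, inductively suppose $(\phi_1,\dots,\phi_{i-1})$ is a partial lift, meaning that $\phi_k(\hat{u})=\phi_\ell(\hat{v})\Leftrightarrow h_k(\hat{u})=h_\ell(\hat{v})$ holds for all $k,\ell<i$ and $\hat{u},\hat{v}\in V(H)$.

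To extend, set $\psi:=\phi_{j_i}|_{V(H'_i)}$, which lies in $\Phi_{H'_i}$, and let $S_i:=\bigcup_{j<i}\phi_j(V(H))\setminus \psi(V(H'_i))$, the set of previously-used vertices of $G$ that are \emph{not} among the images of the shared vertices. By construction $\psi(V(H'_i))\cap S_i=\emptyset$, so $\psi\in \Phi_{H'_i}-S_i$. Also $|S_i|\le v(F)-|V(H'_i)|\le s\le s'$, so the strong builder property gives both $\Phi_{H'_i}-S_i\subseteq (\Phi-S_i)_{H'_i}$ and that $\Phi-S_i$ is an $(H,\beta)$-weak builder. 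Consequently there are at least $\beta(H'_i)$ choices of $\phi\in\Phi-S_i$ with $\phi|_{V(H'_i)}=\psi$, any of which we may take as $\phi_i$.

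The main obstacle is to verify that any such $\phi_i$ preserves the lift condition against every previous $\phi_j$; this is where the structural properties of $H$-trees do the work. I would split into cases on a pair $(\hat{u},\hat{v})\in V(H)^2$ and $j<i$. If $h_i(\hat{u})=h_j(\hat{v})$, then $h_i(\hat{u})\in V(H_i)\cap V(H_j)\subseteq V(H_i)\cap V(H_{j_i})$ by the defining property of $H$-trees, so $\hat{u}\in V(H'_i)$ and $h_{j_i}(\hat{u})=h_i(\hat{u})=h_j(\hat{v})$; the inductive lift condition then gives $\phi_{j_i}(\hat{u})=\phi_j(\hat{v})$, i.e.\ $\phi_i(\hat{u})=\psi(\hat{u})=\phi_j(\hat{v})$. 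Conversely, if $h_i(\hat{u})\neq h_j(\hat{v})$, one considers whether $\hat u\in V(H'_i)$: if so, apply the induction to $\phi_{j_i}$ and $\phi_j$ using $h_{j_i}(\hat{u})=h_i(\hat{u})\ne h_j(\hat{v})$ to conclude $\phi_i(\hat{u})\ne\phi_j(\hat{v})$; if not, then $\phi_i(\hat{u})\notin S_i$ by choice of $\phi_i$, and if $\phi_j(\hat{v})\in \psi(V(H'_i))$ one uses the induction plus injectivity of $\phi_i$ to get $\phi_i(\hat{u})\ne\phi_j(\hat{v})$, while otherwise $\phi_j(\hat v)\in S_i$ directly gives the inequality.

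Multiplying the number of choices across $i$, the total count of lifts is at least
\[
|\Phi|\cdot \prod_{i=2}^{t} \beta(H'_i)\;=\;|\Phi|\cdot \prod_{H'\in 2^H\setminus\{H\}} \beta(H')^{\gamma(H')},
\]
which matches the stated bound (interpreting $\gamma(H)=0$, so the product extends trivially over all of $2^H$). The only nontrivial ingredient beyond routine bookkeeping is the case analysis above, which is essentially a translation of condition (B) in the definition of an $H$-tree into the language of the lift condition.
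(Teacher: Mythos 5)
Your proof is correct and is essentially the paper's argument: the paper proves this by induction on $t$, peeling off the last defining copy $H_t$ and counting $\beta(H^*)$ extensions of a given lift of $(h_1,\dots,h_{t-1})$ via the strong-builder property, which is exactly your forward coordinate-by-coordinate construction unwound. The only difference is cosmetic — you spell out the case analysis verifying the lift condition, which the paper dispatches as ``straightforward from the definition of a lift and witness.''
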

	
	\begin{proof}
		We will proceed by induction on $t$. For $t=1$, the only $H$-tree with a witness of length $1$ is $H$ itself. Given a witness $W=(h_1)$ of length $1$, we note that the type $\gamma$ of $W$ is the constant $0$ function. Furthermore, for any $\phi\in \Phi$, $(\phi)$ is a lift of $(h_1)$ since both $\phi$ and $h_1$ are injective. Thus, there are at least $|\Phi|$ lifts of $(h_1)$.
		
		Now assume $t\geq 2$ and let $F$ be an $H$-tree with $|V(F)|\leq s'$, with defining copies $H_1,H_2,\dots,H_t$, and with witness $(h_1,j_1,\dots,h_t)$ of type $\gamma$. Define $H^*:=H[h_t^{-1}(V(H_t)\cap V(H_{j_t}))]$. Let $F'$ be the $H$-tree with defining copies $H_1,H_2,\dots,H_{t-1}$, and note that $V(F')\leq V(F)\leq s'$, and that $(h_1,j_1,\dots,h_{t-1})$ is a witness of $F'$, say with type $\gamma'$, and note that
		\[
		\gamma'(H')=\begin{cases}
			\gamma(H')-1&\text{ if }H'=H^*,\\
			\gamma(H')&\text{ otherwise}.
		\end{cases}
		\]
		Now, fix a lift $(\phi_1,\dots,\phi_{t-1})$ of $(h_1,j_1,\dots,h_{t-1})$ into $\Phi$. We will show there are many $\phi\in \Phi$ such that $(\phi_1,\dots,\phi_{t-1},\phi)$ is a lift of $(h_1,j_1,\dots,h_t)$. Indeed, if we set 
		\[
		S:=\bigcup_{i=1}^{t-1}\phi_i(V(H))\setminus \phi_{j_t}(V(H^*)),
		\]
		and note that $|S|\leq |V(F')|\leq s'$, then we have that $\Phi-S$ is a $(H,\beta)$-weak builder. Furthermore, $\phi_{j_t}|_{V(H^*)}\in \Phi_{H^*}-S\subseteq (\Phi-S)_{H^*}$ (since $\Phi$ is a strong-builder), and so there are $\beta(H^*)$ extensions $\phi$ of $\phi_{j_t}|_{V(H^*)}$ which are disjoint from $S$. It is straightfoward from the definition of a lift and witness that $(\phi_1,\phi_2,\dots,\phi_{t-1},\phi)$ is a lift of $(h_1,j_1,\dots,h_t)$. By induction, this gives us
		\[
		|\Phi|\cdot\prod_{H'\in 2^H} \beta(H')^{\gamma'(H')}\cdot \beta(H^*)=|\Phi|\cdot\prod_{H'\in 2^H} \beta(H')^{\gamma(H')}
		\]
		lifts of $(h_1,j_1,\dots,h_t)$.
	\end{proof}
	
	The last thing we need is the following lemma relating the type of an $H$-tree $F$ with $v(F)$ and $e(F)$.
	\begin{lemma}\label{lemma counting v and e in a tree}
		Let $F$ be an $H$-tree with a witness $(h_1,j_2,\dots,h_t)$ of type $\gamma$. Then
		\[
		v(H)+\sum_{H'\in 2^H} \gamma(H')(v(H)-v(H'))=v(F)
		\]
		and
		\[
		e(H)+\sum_{H'\in 2^H} \gamma(H')(e(H)-e(H'))=e(F).
		\]
	\end{lemma}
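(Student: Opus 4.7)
The plan is to prove both identities simultaneously by induction on the length $t$ of the witness, exploiting the recursive structure encoded in condition (B) of \Cref{definition H-tree}. The setup mirrors the induction already carried out in the proof of \Cref{lemma counting trees in a strong builder}.

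For the base case $t=1$, the only $H$-tree with a witness of length one is $F = H_1 \cong H$, so $v(F) = v(H)$ and $e(F) = e(H)$; the type $\gamma$ is identically zero, both sums vanish, and the identities hold.

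For the inductive step $t \ge 2$, let $F'$ be the $H$-tree with defining copies $H_1, \dots, H_{t-1}$ and witness $(h_1, j_2, \dots, h_{t-1})$ of some type $\gamma'$, and set $H^* := H[h_t^{-1}(V(H_t) \cap V(H_{j_t}))]$, exactly as in the proof of \Cref{lemma counting trees in a strong builder}. As observed there, $\gamma(H^*) = \gamma'(H^*) + 1$ and $\gamma(H') = \gamma'(H')$ for every other $H' \in 2^H \setminus \{H\}$. I would then establish the two incremental identities $v(F) = v(F') + (v(H) - v(H^*))$ and $e(F) = e(F') + (e(H) - e(H^*))$; combining these with the inductive hypothesis applied to $F'$ immediately yields the target formulas, since the only new contribution to each sum is the term corresponding to the incremented value at $H^*$.

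The vertex identity follows immediately from condition (B), which gives $V(H_t) \cap V(F') = V(H_t) \cap V(H_{j_t})$, a set of size $v(H^*)$; hence $H_t$ contributes $v(H) - v(H^*)$ new vertices to $F$. The edge identity is the one step requiring care, and is where I expect the only real obstacle. Any edge of $H_t$ that already lies in $F'$ must have both endpoints in $V(H_t) \cap V(F') = V(H_t) \cap V(H_{j_t})$, so it lies in $H_t[V(H_t) \cap V(H_{j_t})]$. Conversely, the second clause of condition (B) asserts that $h_t^{-1}$ and $h_{j_t}^{-1}$ agree on $V(H_t) \cap V(H_{j_t})$, so $H_t$ and $H_{j_t}$ induce identical edge sets on this intersection, and every edge of $H_t[V(H_t) \cap V(H_{j_t})]$ is already present in $H_{j_t} \subseteq F'$. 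This common edge set has exactly $e(H^*)$ edges, so $H_t$ contributes $e(H) - e(H^*)$ new edges to $F$, completing the induction.
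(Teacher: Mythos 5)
Your proposal is correct and takes essentially the same approach as the paper. The paper writes the count as a single telescoping sum over $i=1,\dots,t$, simplifying the intersection terms via condition (B), whereas you peel off the last copy $H_t$ and induct; these are the same arithmetic with different bookkeeping. One small bonus of your write-up: you explicitly justify why $E(H_t)\cap E(F')$ has exactly $e(H^*)$ edges by invoking the second clause of condition (B) (that $h_t^{-1}$ and $h_{j_t}^{-1}$ agree on the overlap), whereas the paper handles the edge case somewhat tersely with ``similarly as in the case for vertices.''
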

	
	\begin{proof}
		We write
		\begin{align*}
			v(F)=\left|\bigcup_{i=1}^t h_i(V(H))\right|&=\sum_{i=1}^t \left|h_i(V(H))\setminus\left(\bigcup_{k<i} h_k(V(H))\right)\right|\\
			&=\sum_{i=1}^t\left( |h_i(V(H))|-\left| h_i(V(H))\cap \left(\bigcup_{k<i} h_k(V(H))\right)\right|\right)\\
			&=|h_1(V(H))|+\sum_{i=2}^t \left(|h_i(V(H))|-\left| h_i(V(H))\cap h_{j_i}(V(H))\right|\right)\\
			&=v(H)+\sum_{i=2}^t\left( v(H)-\left| h_i(V(H))\cap h_{j_i}(V(H))\right|\right),
		\end{align*}
		then for $H'\in 2^H$ as $i$ ranges from $2$ to $t$ in the above sum, we have exactly $\gamma(H')$ indices for which $\left| h_i(V(H))\cap h_{j_i}(V(H))\right|=|V(H')|$, so we may write
		\[
		v(H)+\sum_{i=2}^t \left(v(H)-| h_i(V(H))\cap h_{j_i}(V(H))|\right)=v(H)+\sum_{H'\in 2^H} \gamma(H')(v(H)-v(H')).
		\]
		Let us now focus on the edges of $F$. For each $i\in [t]$, write $E_i:=\{h_i(\hat{u})h_i(\hat{v})\mid \hat{u}\hat{v}\in E(H)\}$. Then, similarly as in the case for vertices, we can write
		\begin{align*}
			e(F)&=\sum_{i=1}^t\left( |E_i|-\left| E_i\cap \left(\bigcup_{k<i} E_k\right)\right|\right)\\
			&=e(H)+\sum_{i=2}^t\left( e(H)-\left|E_i\cap E_{j_i}\right|\right)\\
			&=e(H)+\sum_{H'\in 2^H}\gamma(H')\left(e(H)-e(H')\right).
		\end{align*}
	\end{proof}
	
	We can now prove Theorem~\ref{theorem d large tree implies dadmissible}.
	
	\begin{proof}[Proof of Theorem~\ref{theorem d large tree implies dadmissible}]
		The case $H=K_1$ follows from \Cref{observation K1 is easy}, so from now on we assume $H$ is a connected graph on at least 2 vertices.

		Let $\pow\in\mathbb{N}$, and set $p:= Cn^{-1/d}$ for a constant $C=C(\pow)>0$ that we will specify later. Let $\mathcal{R}$ denote the collection of components of $F[R]$. For each $H'\in (2^{H}\setminus \{H\})\cup \mathcal{R}$,  let $(F_{H',1},R_{H',1}),\dots,(F_{H',t_{H'}},R_{H',t_{H'}})$ be a collection of rooted graphs of rooted density at least $d$ for which $H'$ is $d$-admissible with respect to (which exists by hypothesis of the theorem).  This implies there exists some constant $C_{H'}=C_{H'}(\pow)$ such that
		\[
		\mathrm{ex}\left(n,H',\bigcup_{i=1}^{t_{H'}}(F_{H',i},R_{H',i})^{\pow}\right)\leq C_{H'}\cdot n^{v(H')-e(H')/d}.
		\]
		Let $\mathcal{F}$ denote the entire collection of graphs $(F_{H',i},R_{H',i})^{\pow}$ for all $H'\in (2^{H}\setminus \{H\})\cup \mathcal{R}$ and $i\in [t_{H'}]$. Let $\mathcal{A}\subseteq 2^{H}$ consist of all the proper induced subgraphs $H'\subsetneq H$ such that $E(H')\neq \emptyset$. Then, as long as $C\geq \max_{H'\in \mathcal{A}}\{\mathrm{aut}(H')\cdot C_{H'}\}$, we have that for any $H'\in\mathcal{A}$,
		\begin{equation}\label{equation nonempty subgraphs are bounded}
			\mathrm{aut}(H')\cdot \mathrm{ex}(n,H',\mathcal{F})\leq Cn^{v(H')-e(H')/d}\leq C^{e(H)}n^{v(H')-e(H')/d}=n^{v(H')}p^{e(H')}.
		\end{equation}
		Furthermore, for $H'\in 2^{H}\setminus\mathcal{A}$ with $H'\neq H$, we have $E(H')=\emptyset$, so we trivially have 
		\begin{equation}\label{equation empty subgraphs are bounded}
			\mathrm{aut}(H')\cdot \mathrm{ex}(n,H',\mathcal{F})\leq n^{v(H')}=n^{v(H')}p^{e(H')}.
		\end{equation}
		
		With this in hand, let $G$ be a graph that is $\mathcal{F}$-free which has
		\[
		\mon(H,G)\ge \left(2^{v(H)+1}C^{e(H)}\right)n^{v(H)-e(H)/d}=2^{v(H)+1}n^{v(H)}p^{e(H)}.
		\]
		We will show that $G$ contains a graph in $(F,R)^{\pow}$ as a subgraph, which in view of \Cref{observation monomorphisms same as copies} will imply that
		\[
		\mathrm{ex}\left(n,H,\mathcal{F}\cup\left\{(F,R)^{\pow}\right\}\right)=O_{\pow}(n^{v(H)-e(H)/d}),
		\]
		completing the proof. We will do this by showing that $G$ contains few copies of $F[R]$ and many copies of $F$, with this latter goal requiring us to show the existence of a strong builder as follows.
		
		For each $H'\in 2^H\setminus\{H\}$, let $\beta(H'):=n^{v(H)-v(H')}p^{e(H)-e(H')}$. By~\eqref{equation nonempty subgraphs are bounded} and~\eqref{equation empty subgraphs are bounded}, we have by \Cref{observation monomorphisms same as copies} that $\mon(H',G)\le  n^{v(H')}p^{e(H')}$ for each $H'\in 2^H\setminus\{H\}$, so by Lemma~\ref{lemma weak builder} applied to $\Mon(H,G)$, there exists a $(H,2\beta)$-weak builder $\Phi\subseteq \mathrm{Mon}(H,G)$. Furthermore, for each $\hat{w}\in V(H)$, we note that
		\[
		\beta(H-\hat{w})=np^{\deg(\hat{w})}=C^{\deg(\hat{w})}n^{1-\deg(\hat{w})/d}\geq C,
		\]
		where in this last step we used the assumption $d\ge \Delta(H)$ and $\deg(\hat{w})\ge 1$ due to connectivity. In particular, as long as $C$ is chosen such that $C\geq \frac{v(H)}{1-2^{-1/v(F)}}+s$, we have $\beta(H-\hat{w})\geq \frac{v(H)}{1-2^{-1/v(F)}}+s$ for all $\hat{w}\in V(H)$. Thus, by Lemma~\ref{lemma strong builder} applied with $s:=v(F)$, we actually have that $\Phi$ is a $(H,\beta,v(F))$-strong builder.

		We now show that $G$ contains many copies of $F$.  By definition of $F$ being an $H$-tree it has some witness $(h_1,j_2,\ldots,h_t)$ which is of some type $\gamma$.  By Lemma~\ref{lemma counting trees in a strong builder},  the number of lifts of this witness in $\Phi$ is at least 
		\begin{equation}\label{equation copies of T in Phi}
			|\Phi|\cdot\prod_{H'\in 2^H} \beta(H')^{\gamma(H')}\geq n^{v(H)}p^{e(H)}\prod_{H'\in 2^H} \left(n^{v(H)-v(H')}p^{e(H)-e(H')}\right)^{\gamma(H')}.
		\end{equation}
		Expanding the above product and collecting the powers of $n$ (not including $p$), then applying Lemma~\ref{lemma counting v and e in a tree} we find the power of $n$ in~\eqref{equation copies of T in Phi} is
		\[
		v(H)+\sum_{H'\in 2^H}d_H(v(H)-V(H'))=v(F),
		\]
		and similarly, the power of $p$ is
		\[
		e(H)+\sum_{H'\in 2^H}\gamma(H')(e(H)-e(H'))=e(F).
		\]
		
		Thus, we find we have at least $n^{v(F)}p^{e(F)}$ lifts of this witness into $\Phi$.  Moreover, \Cref{lemma copy implies mono} implies each of these lifts $(\phi_1,\phi_2,\dots,\phi_t)$ gives us a monomorphism of $F$ into $G$ with image $\bigcup_{i=1}^t\phi_i(V(H))$.  As the number of lifts of the witness $(h_1,j_2,\ldots,h_t)$ with a given image of size $v(F)$ is trivially at most $v(F)^{tv(H)}$, we in total conclude that
		\[
		\mon(F,G)\geq \frac{n^{v(F)}p^{e(F)}}{v(F)^{tv(H)}}.
		\]
		
		We next count $\mathrm{mon}(F[R],G)$. By~\eqref{equation nonempty subgraphs are bounded} and~\eqref{equation empty subgraphs are bounded} (along with Observation~\ref{observation monomorphisms same as copies}), for each $R_i\in\mathcal{R}$, we have that $\mathrm{mon}(R_i,G)\leq n^{v(R_i)}p^{e(R_i)}$, so in total the number of monomorphisms of $F[R]$ into $G$ is at most
		\[
		\prod_{R_i\in\mathcal{R}} n^{v(R_i)}p^{e(R_i)}=n^{v(F[R])}p^{e(F[R])}.
		\]
		Finally, if we have, say $k$ monomorphisms of $F$ into $G$ which all map $F[R]$ onto some fixed set $S$ with $|S|=v(F[R])$, then there must be at least $k/v(F[R])!$ monomorphisms of $F$ into $G$ which map $F[R]$ onto $S$ such that the restriction to $S$ is the same. Putting everything together, by the Pigeonhole Principle, there must exist at least
		\[
		M':=\frac{n^{v(F)}p^{e(F)}}{(v(F[R])!\cdot v(F)^{tv(H)}n^{v(F[R])}p^{e(F[R])}}=\frac{n^{v(F)-v(F[R])}p^{e(F)-e(F[R])}}{(v(F[R])!\cdot v(F)^{tv(H)}}
		\]
		monomorphisms of $F$ into $G$ which all agree on $F[R]$. By Observation~\ref{observation monomorphisms same as copies}, this gives us that there are at least $M:=M'/\mathrm{aut}(F)$ distinct copies of $F$ which agree on $F[R]$ in $G$, i.e. $G$ contains a copy of some element in $(F,R)^M$. Recalling that $p=Cn^{-1/d}$ and $\frac{v(F)-v(F[R])}{e(F)-e(F[R])}=d$, we find
		\[
		M=\frac{C^{e(F)-e(F[R])}}{\mathrm{aut}(F)\cdot (v(F[R])!\cdot v(F)^{tv(H)}}.
		\]
		Thus, as long as we choose $C\geq \left(\pow\cdot \mathrm{aut}(F)\cdot (v(F[R])!\cdot v(F)^{tv(H)}\right)^{1/(e(F)-e(F[R]))}$, we have a copy of $(F,R)^{\pow}$ in $G$. It is straightforward to see that $C=C(\pow)$ can be chosen large enough to satisfy all the required inequalities, completing the proof.
	\end{proof}

	\subsection{Building \texorpdfstring{$H$}{H}-Trees}\label{subsection building H-trees}
	In this subsection we build  $H$-trees in order to apply \Cref{theorem d large tree implies dadmissible} to prove \Cref{theorem:maxDegreeGeneral} and \Cref{theorem:maxDegreeTrees}.  We will primarily build our $H$-trees by iteratively adding new copies of $H$ to a given $H$-tree $F$, for which the following will be helpful.
	
	\begin{lemma}\label{lemma iteratively building H-trees}
		If $F$ is an $H$-tree with defining copies $\{H_1,\ldots,H_{t-1}\}$ and if $F'$ is the union of $F$ and a copy $H_{t}$ of $H$ with $H_t\not\in \{H_1,\dots,H_{t-1}\}$, then $F'$ is an $H$-tree provided there exists some $j_{t}$ such that 
		\[V(H_{t})\cap V(F)=V(H_{t})\cap V(H_{j_{t}}),\]
		and such that there is an isomorphism $\phi$ from $H_{j_t}$ to $H_t$ such that $\phi(u)=u$ for all $u\in V(H_{t})\cap V(H_{j_{t}})$.
	\end{lemma}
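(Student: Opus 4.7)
The plan is a direct verification of the two conditions in the definition of an $H$-tree (Definition~\ref{definition H-tree}) for the proposed set of defining copies $\{H_1,\ldots,H_{t-1},H_t\}$ of $F'$. Since $F$ is already an $H$-tree with defining copies $\{H_1,\ldots,H_{t-1}\}$, we have a witness $(h_1,j_2,h_2,\ldots,h_{t-1})$ for $F$, and we need to extend it by a suitable choice of $j_t$ and isomorphism $h_t:V(H)\to V(H_t)$ so that condition (B) of Definition~\ref{definition H-tree} holds at step $i=t$. Condition (A) is automatic: we have $F'=F\cup H_t=\bigcup_{i=1}^t H_i$ by construction.

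The first key step is to define $h_t$. Taking $j_t$ as given in the hypothesis, I would set
\[
h_t:=\phi\circ h_{j_t},
\]
where $\phi:H_{j_t}\to H_t$ is the isomorphism supplied by the hypothesis. Since $h_{j_t}$ is an isomorphism $H\to H_{j_t}$ and $\phi$ is an isomorphism $H_{j_t}\to H_t$, the composition $h_t$ is an isomorphism $H\to H_t$, as required.

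Next I would verify condition (B) at step $i=t$. The set equation
\[
V(H_t)\cap\bigcup_{j=1}^{t-1}V(H_j)=V(H_t)\cap V(F)=V(H_t)\cap V(H_{j_t})
\]
follows directly from the hypothesis, using that $V(F)=\bigcup_{j=1}^{t-1}V(H_j)$ by condition (A) applied to $F$. It remains to check that for every $v\in V(H_t)\cap V(H_{j_t})$ we have $h_t^{-1}(v)=h_{j_t}^{-1}(v)$. Since $\phi$ fixes $V(H_t)\cap V(H_{j_t})$ pointwise by hypothesis, we have $\phi^{-1}(v)=v$ for such $v$, and hence
\[
h_t^{-1}(v)=h_{j_t}^{-1}\bigl(\phi^{-1}(v)\bigr)=h_{j_t}^{-1}(v),
\]
as needed. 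Condition (B) at all earlier indices $i<t$ is inherited verbatim from the witness for $F$, so $(h_1,j_2,\ldots,j_t,h_t)$ is a witness for $F'$, completing the verification.

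The argument is essentially a definition chase, so I do not anticipate any real obstacle; the only point requiring a little care is ensuring that the ``identity on the overlap'' hypothesis on $\phi$ is precisely what is needed to match condition (B), which is why the lemma is phrased with this extra assumption rather than with an arbitrary isomorphism from $H_{j_t}$ to $H_t$.
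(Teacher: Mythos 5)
Your proposal is correct and takes exactly the same approach as the paper: define $h_t:=\phi\circ h_{j_t}$ and verify condition (B) of Definition~\ref{definition H-tree} at step $t$. In fact, the paper simply states the definition of $h_t$ and declares the verification ``straightforward,'' whereas you carry out the definition chase explicitly.
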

	
	\begin{proof}
		Since $F$ is an $H$-tree, there exists isomorphisms $h_i:V(H)\to V(H_i)$ for all $i\in [t-1]$. We define $h_t:V(H)\to V(H_t)$ by the composition $h_t:=\phi\circ h_{j_t}$. With these isomorphisms, it is straightforward to verify that $F'$ is an $H$-tree with defining copies $H_1,\dots,H_t$.
	\end{proof}
	
	In order to construct \textit{balanced} $H$-trees we will use the following sufficient criteria, which follows immediately from the definitions of densities and balancedness.
	
	\begin{observation}\label{observation easy balanced condition}
		Let $(F_1,R_1)$ and $(F_2,R_2)$ be a pair of rooted graphs such that there exists a function $g:V(F_1)\setminus R_1\to V(F_2)\setminus R_2$ such that $g$ is an isomorphism from $F_1-R_1$ to $F_2-R_2$ and such that $\deg_{F_1}(x)=\deg_{F_2}(g(x))$ for all $x\in V(F_1)\setminus R_1$. Then the rooted density of $(F_1,R_1)$ and $(F_2,R_2)$ are the same, and $(F_1,R_1)$ is balanced if and only if $(F_2,R_2)$ is balanced.
	\end{observation}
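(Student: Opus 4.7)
The plan is to establish a single bijective correspondence between the non-empty subsets of $V(F_1)\sm R_1$ and of $V(F_2)\sm R_2$ that simultaneously preserves $|S|$ and $\mathrm{inc}(S)$; from this both claims drop out of the definitions.

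The workhorse is the elementary identity
\[
\mathrm{inc}_F(S)=\sum_{v\in S}\deg_F(v)-e(F[S])
\]
for any $S\sub V(F)$, obtained by noting that each edge of $F[S]$ contributes $2$ to the degree sum and $1$ to $\mathrm{inc}_F(S)$, while each edge with exactly one endpoint in $S$ contributes $1$ to both.

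First I would fix any non-empty $S_1\sub V(F_1)\sm R_1$ and set $S_2:=g(S_1)\sub V(F_2)\sm R_2$. Since $g$ is a bijection, $|S_1|=|S_2|$; the degree hypothesis gives $\sum_{v\in S_1}\deg_{F_1}(v)=\sum_{v\in S_2}\deg_{F_2}(v)$; and the fact that $g$ restricts to a graph isomorphism from $F_1-R_1$ to $F_2-R_2$ (and $S_1$ lies in $V(F_1)\sm R_1$) forces $e(F_1[S_1])=e(F_2[S_2])$. Plugging these into the identity above yields $\mathrm{inc}_{F_1}(S_1)=\mathrm{inc}_{F_2}(S_2)$. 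Since $S_1\mapsto g(S_1)$ is a bijection between the nonempty subsets of the two non-root sets that preserves the ratio $\mathrm{inc}(S)/|S|$, taking the maximum on both sides proves the equality of rooted densities.

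For the balancedness assertion I would specialize to the distinguished subset $S_i=V(F_i)\sm R_i$. The above shows $\mathrm{inc}_{F_1}(V(F_1)\sm R_1)/(v(F_1)-|R_1|)=\mathrm{inc}_{F_2}(V(F_2)\sm R_2)/(v(F_2)-|R_2|)$, and in the paper's convention (where the root sets of the relevant rooted graphs are independent) this common value equals $e(F_i)/(v(F_i)-|R_i|)$. Combined with the equality of rooted densities established above, whether this common value equals the common maximum is decided simultaneously for both rooted graphs, which is exactly the statement that one is balanced if and only if the other is. I do not anticipate any real obstacle; the proof is essentially just the paired bijection of subsets together with the routine double-counting identity for $\mathrm{inc}$.
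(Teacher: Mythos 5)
Your proof of the rooted-density equality is precisely the intended argument: the degree-preserving isomorphism $g$ carries each non-empty $S\sub V(F_1)\sm R_1$ to a set of the same size and, via the identity $\mathrm{inc}_F(S)=\sum_{v\in S}\deg_F(v)-e(F[S])$, of the same incidence count, so the two maxima coincide.

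For the balanced equivalence you correctly noticed that the hypotheses say nothing about edges inside the root sets, so $e(F_1)$ and $e(F_2)$ need not coincide and the transfer of the literal balanced criterion ``rooted density $=e(F)/(v(F)-|R|)$'' can fail. You patch this by invoking a ``paper's convention'' that the roots are independent, but no such convention is stated, and in the paper's constructions the components of $F[R]$ are proper induced subgraphs of $H$ and so generally have edges. The actual resolution is that the paper's definition of ``balanced'' should be read with $\mathrm{inc}(V(F)\sm R)=e(F)-e(F[R])$ in place of $e(F)$; this is the quantity that actually matters in the applications (the proof of Theorem~\ref{theorem d large tree implies dadmissible} uses $d=\tfrac{e(F)-e(F[R])}{v(F)-|R|}$, with the displayed fraction there having its numerator and denominator transposed). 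Under that reading, your specialization of the subset transfer to $S_i=V(F_i)\sm R_i$ already gives the balanced equivalence with no independence assumption at all -- so the right move is exactly the one you made, minus the appeal to an unstated convention.
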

	We will always apply this result with $(F_2,R_2)$ the balanced tree $(T,R_T)$ guaranteed by \Cref{lem:BukhConlon} whose non-rooted vertices form a path $u_1\cdots u_a$.  In view of this, it  will suffice for us to construct a rooted $H$-tree $(F,R)$ such that $F-R$ forms a path $u'_1\cdots u'_a$ with $\deg_F(u'_i)=\deg_T(u_i)$ for all $i$.  
	
	Our remaining proofs follow the strategy outlined above in roughly two steps: we first build some $H$-tree $(F',R')$ such that $F'-R'$ forms a path $u'_1\cdots u'_a$ with $\deg_{F'}(u'_i)\le \deg_T(u_i)$ for all $i$, after which we add additional copies of $H$ to $F'$ in order to increase each of these degrees until they equal $\deg_T(u_i)$ while maintaining that the components of $F'-R'$ are well-behaved.  Moreover, our constructions for $(F',R')$ will start by specifying some edge $e\in E(H)$ and then building $F'$ as a union of copies of $H$ such that each edge $u_i'u_{i+1}'$ in the path $F'-R'$ plays the role of $e$ in each copy of $H$.  This procedure will be simplest when $e$ is a cut edge, i.e.\ an edge such that $H-e$ is disconnected.
	
	\begin{lemma}\label{lemma cut edge dadmissible}
		Let $H$ be a connected graph of maximum degree at most $\Delta$ which has a cut-edge $e=\hat{x}\hat{y}$.  If $d\ge \Delta$ is rational and if every proper induced subgraph of $H$ is $d$-admissible, then $H$ is $d$-admissible.
	\end{lemma}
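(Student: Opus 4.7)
The plan is to invoke \Cref{theorem d large tree implies dadmissible}: to show $H$ is $d$-admissible, it suffices to construct a balanced rooted $H$-tree $(F, R)$ of rooted density $d$ such that every component of $F[R]$ is $d$-admissible. Since every proper induced subgraph of $H$ is $d$-admissible by hypothesis, and disjoint unions of $d$-admissible graphs are $d$-admissible by \Cref{observation disjoint union of dadmissible}, it is enough to arrange that each component of $F[R]$ is a proper induced subgraph of $H$.

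The cut edge $e = \hat{x}\hat{y}$ will be used to localize components of $F[R]$. Write $H - e = H_x \sqcup H_y$, where $\hat{x} \in V(H_x)$ and $\hat{y} \in V(H_y)$; observe that each of $H - \hat{x}$, $H - \hat{y}$, and $H - \{\hat{x}, \hat{y}\}$ is a proper induced subgraph of $H$. To build $F$, I would start from the balanced rooted tree $(T, R_T)$ of rooted density $d$ given by \Cref{lem:BukhConlon}, whose unrooted vertices $u_1, \ldots, u_a$ form a path with $\deg_T(u_i) \ge d$. Since $T$ is bipartite, I would 2-color $V(T) = X \sqcup Y$ so that each edge of $T$ crosses the bipartition, and fix this coloring to consistently assign the role $\hat{x}$ to vertices in $X$ and $\hat{y}$ to vertices in $Y$ throughout the construction.

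For each edge $uv \in E(T)$ with $u \in X$ and $v \in Y$, I would incorporate into $F$ a copy of $H$ realizing $uv$ as $e$, with fresh root vertices playing the remaining positions in $V(H_x) \setminus \{\hat{x}\}$ and $V(H_y) \setminus \{\hat{y}\}$, and share these attached pieces at each vertex $w \in V(T)$ across all copies of $H$ incident to $w$. Taking the unrooted vertices of $F$ to be $\{u_1, \ldots, u_a\}$ and everything else into $R$, one checks that $F$ is an $H$-tree by enumerating its constituent copies in a BFS order on $E(T)$: each new copy intersects the union of previous copies exactly along the $\hat{x}$- or $\hat{y}$-attachment of its already-processed endpoint in $T$, matching its intersection with a single earlier copy, and the consistent bipartite role assignment ensures the isomorphism-compatibility condition of \Cref{definition H-tree}. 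Each component of $F[R]$ then lies inside a single attachment at a vertex of $T$ together with some adjacent roots of $R_T$, and is therefore a subgraph of $H - \hat{x}$ or $H - \hat{y}$, hence a proper induced subgraph of $H$ and $d$-admissible.

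The main obstacle is verifying that $(F, R)$ has rooted density exactly $d$ and is balanced. The naive decoration above adds $|E(H_x)| + |E(H_y)|$ extra edges per vertex of $T$, which threatens to inflate the rooted density above $d$. To overcome this, I would tune the construction so that $\deg_F(u_i) = \deg_T(u_i)$ for each $i$ and the identity isomorphism $F - R \to T - R_T$ sending $u_i \mapsto u_i$ satisfies the hypothesis of \Cref{observation easy balanced condition}, thereby transferring the balancedness and rooted density $d$ from $(T, R_T)$ to $(F, R)$. The hypothesis $d \ge \Delta$ is crucial here: since each copy of $H$ contributes at most $\Delta \le d$ to $\deg_F(u_i)$, there is enough flexibility to calibrate how many copies of $H$ are attached at each $u_i$ (possibly by adding further pendant copies of $H$ anchored only at $u_i$) so that $\deg_F(u_i)$ lands exactly at $\deg_T(u_i)$, while keeping every component of $F[R]$ a proper induced subgraph of $H$.
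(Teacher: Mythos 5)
Your high-level plan is the same as the paper's: invoke \Cref{theorem d large tree implies dadmissible} by constructing a balanced rooted $H$-tree $(F,R)$ whose unrooted vertices form a path matching the unrooted path of a Bukh--Conlon tree $(T,R_T)$ from \Cref{lem:BukhConlon}, with each component of $F[R]$ a proper induced subgraph of $H$, and transfer balancedness via \Cref{observation easy balanced condition}. Your choice of components ($H_x-\hat{x}$, $H_y-\hat{y}$, $H_x$, $H_y$) and the idea of chaining copies of $H$ along the path using the cut edge as the spine edge are also what the paper does.

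However, there is a genuine gap in the degree calibration, which is exactly the step you flag as the ``main obstacle'' but do not resolve. Blowing up \emph{every} edge of $T$ into a copy of $H$ and sharing attached pieces forces $\deg_F(u_i)=\deg_T(u_i)+\deg_H(\hat{x})-1$ (or the $\hat{y}$ analogue), which strictly \emph{overshoots} $\deg_T(u_i)$ whenever $\deg_H(\hat{x})>1$; no amount of adding further copies can fix an overshoot. If instead you start leaner (blow up only the path edges) and then add ``pendant copies of $H$ anchored only at $u_i$,'' each such pendant contributes exactly $\deg_H(\hat{x})$ new edges at $u_i$. Thus $\deg_F(u_i)$ is confined to an arithmetic progression of common difference $\deg_H(\hat{x})$, and the target $\deg_T(u_i)$ need not lie in that progression. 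The hypothesis $d\geq\Delta$ guarantees there is \emph{room} to add at least one pendant copy, but it does not give the granularity needed to land exactly on $\deg_T(u_i)$.

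The paper's key trick, which is missing from your proposal, is to increment in steps of exactly $1$. Rather than adding a copy that shares only the single vertex $u_i=x_j$, the paper adds a copy $H_{a-1+i}$ that shares the \emph{entire} subgraph $H^{\hat{x}}$ --- all of $V(H_x)$, including the already-present attached piece --- with an existing copy $H_j$ containing $x_j$, and is otherwise disjoint. Only the $V(H_y)$-vertices are fresh, so the only new edge at $x_j$ is the cut edge to the new $\hat{y}$, raising $\deg_F(x_j)$ by exactly one while adding a fresh $H_y\cong H-V(H^{\hat{x}})$ component to $F[R]$. Iterating until $\deg_F(u_i)=\deg_T(g(u_i))$ for all $i$ yields the balanced $(F,R)$ you need. (The paper also handles the boundary case $d=\Delta$ by choosing $a=2$, $b=2\Delta$ in \Cref{lem:BukhConlon}, so the base construction already satisfies $\deg_{F'}(u_i)\leq\Delta\leq\deg_T(g(u_i))$.) With this replacement for your calibration step, the rest of your argument --- verifying the $H$-tree conditions via \Cref{lemma iteratively building H-trees}, applying \Cref{observation easy balanced condition}, and concluding via \Cref{theorem d large tree implies dadmissible} --- goes through as planned.
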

	
	\begin{proof}

		Throughout this proof, if $H_i$ is a copy of $H$ and if $H'\sub H$ is an induced subgraph, then we write $H'_i$ to denote the subgraph of $H_i$ corresponding to $H'$.  
		Let $H^{\hat{x}}$ and $H^{\hat{y}}$ denote the components of $H-e$ containing $\hat{x}$ and $\hat{y}$ respectively.
		
		By Theorem~\ref{theorem d large tree implies dadmissible}, it suffices to prove there exists a balanced $H$-tree of rooted density $d$ whose root set contains only components in $2^H\setminus\{H\}$, and we will try to design such an $H$-tree to resemble the balanced trees from \Cref{lem:BukhConlon}.  To this end, let $a,b$ be positive integers with $a$ even such that $d=b/a$, and let $(T,R_T)$ be the balanced rooted tree of rooted density $d$ with $a$ unrooted vertices $u_1,u_2,\dots,u_a$ guaranteed by \Cref{lem:BukhConlon}.  For technical reasons, if $d=\Delta$ we will further enforce that $a=2$ and $b=2\Delta$.
		
		To build our $H$-tree, we start with $H_1$ being an arbitrary copy of $H$.  Iteratively, given that we have defined $H_1,\ldots,H_{i-1}$ for some even $2\le i\le a-1$, we define $H_i$ to be a copy which has $H^{\hat{y}}_i=H^{\hat{y}}_{i-1}$ and which is disjoint from every other vertex of the $H_j$ copies.  Similarly if $2\le i\le a-1$ is odd we define $H_i$ to be a copy with $H^{\hat{x}}_i=H^{\hat{x}}_{i-1}$ and which is disjoint from every other vertex of the other $H_j$. Let $F'$ be the union of these $H_1,\ldots,H_{a-1}$ copies. 
		
		For each odd integer $1\le i\le a-1$, let $x_i$ denote the vertex of $H_i$ which plays the role of $\hat{x}$ and also let $y_{i+1}$ denote the vertex of $H_{i}$ which plays the role of $\hat{y}$.  With this naming convention, we see by construction that each $x_i$ with $i\ne 1$ plays the role of $\hat{x}$ in both $H_i$ and $H_{i-1}$, and similarly each $y_i$ with $i\ne a$ plays the role of $\hat{y}$ in both $H_i$ and $H_{i-1}$. Let $U\sub V(F')$ (with ``$U$'' standing for ``unrooted''') denote the set $U:=\{x_1,y_2,\ldots,x_{a-1},y_a\}$, and define $g:U \to V(T)\setminus R_T$ by $g(x_i)=u_i$ for odd $i$ and $g(y_i)=u_i$ for even $i$.

		\begin{figure}
			\begin{center}
				\begin{subfigure}[t]{0.2\textwidth}
					\begin{tikzpicture}
						\draw[thick] (0,0)--(-0.5,-1)--(.5,-1)--(0,0)--(1.5,0)--(2,-0.55)--(1.8,-1.2);
						
						\draw[dashed, rounded corners] (-.7, 0.6) rectangle (.7,-1.2);
						\draw[dashed, rounded corners] (1.3, 0.6) rectangle (2.2,-1.4);
						
						\node (0) at (0,0) {};
						\node (1) at (1.5,0) {};
						\node (2) at (-0.5,-1) {};
						\node (3) at (0.5,-1) {};
						\node (4) at (2,-0.55) {};
						\node (5) at (1.8,-1.2) {};
						
						\node (6) at (0,0.85) {$H_{\hat{x}}$};
						\node (7) at (1.75,0.85) {$H_{\hat{y}}$};
						
						\fill (0) circle (0.1) node [above] {$\hat{x}$};
						\fill (1) circle (0.1) node [above] {$\hat{y}$};
						\fill (2) circle (0.1) node [below] {};
						\fill (3) circle (0.1) node [below left] {};
						\fill (4) circle (0.1) node [below] {};
						\fill (5) circle (0.1) node [below] {};
					\end{tikzpicture}
					\caption{The graph $H$. Subgraphs $H_{\hat{x}}$ and $H_{\hat{y}}$ are circled.}\label{figure tree proof H}
				\end{subfigure}\begin{subfigure}[t]{0.4\textwidth}
					\begin{tikzpicture}
						\draw[thick] (0,0.1)--(-0.35,-.8)--(.35,-.8)--(0,0.1)--(1,0.1)--(1.2,-0.65)--(1.1,-1.3);
						\draw[thick] (2,0.1)--(1.65,-.8)--(2.35,-.8)--(2,0.1)--(3,0.1)--(3.2,-0.65)--(3.1,-1.3);
						\draw[thick] (4,0.1)--(3.65,-.8)--(4.35,-.8)--(4,0.1)--(5,0.1)--(5.2,-0.65)--(5.1,-1.3);
						\draw[thick] (0,0.1)--(5,0.1);
						\draw[thick,dashed] (-0.65,-0.3)--(5.65,-0.3);
						
						\node (0) at (0,0.1) {};
						\node (1) at (1,0.1) {};
						\node (2) at (2,0.1) {};
						\node (3) at (3,0.1) {};
						\node (4) at (4,0.1) {};
						\node (5) at (5,0.1) {};
						\node (6) at (1.65,-.8) {};
						\node (7) at (2.35,-.8) {};
						\node (8) at (3.2,-0.65) {};
						\node (9) at (3.1,-1.3) {};
						\node (10) at (-0.35,-.8) {};
						\node (11) at (.35,-.8) {};
						\node (12) at (1.2,-0.65) {};
						\node (13) at (1.1,-1.3) {};
						\node (14) at (3.65,-.8) {};
						\node (15) at (4.35,-.8) {};
						\node (16) at (5.2,-0.65) {};
						\node (17) at (5.1,-1.3) {};
						
						\fill (0) circle (0.06) node [above] {$x_1$};
						\fill (1) circle (0.06) node [above] {$y_2$};
						\fill (2) circle (0.06) node [above] {$x_3$};
						\fill (3) circle (0.06) node [above] {$y_4$};
						\fill (4) circle (0.06) node [above] {$x_5$};
						\fill (5) circle (0.06) node [above] {$y_6$};
						\fill (6) circle (0.06) node [above] {};
						\fill (7) circle (0.06) node [above] {};
						\fill (8) circle (0.06) node [above] {};
						\fill (9) circle (0.06) node [above] {};
						\fill (10) circle (0.06) node [above] {};
						\fill (11) circle (0.06) node [above] {};
						\fill (12) circle (0.06) node [above] {};
						\fill (13) circle (0.06) node [above] {};
						\fill (14) circle (0.06) node [above] {};
						\fill (15) circle (0.06) node [above] {};
						\fill (16) circle (0.06) node [above] {};
						\fill (17) circle (0.06) node [above] {};
					\end{tikzpicture}
					\caption{The graph $F'=F_0$.}\label{figure tree proof F'=F_0}
				\end{subfigure}\begin{subfigure}[t]{0.4\textwidth}
					\begin{tikzpicture}
						\draw[thick] (0,0.1)--(-0.35,-.8)--(.35,-.8)--(0,0.1)--(1,0.1)--(1.2,-0.65)--(1.1,-1.3);
						\draw[thick] (2,0.1)--(1.55,-.8)--(2.2,-.8)--(2,0.1)--(3,0.1);
						\draw[thick] (4,0.1)--(3.65,-.8)--(4.35,-.8)--(4,0.1)--(5,0.1)--(5.2,-0.65)--(5.1,-1.3);
						\draw[thick] (0,0.1)--(5,0.1);
						\draw[thick,dashed] (-0.65,-0.3)--(5.65,-0.3);
						\draw[thick,red] (2.74,-.45)--(2.5,-0.95)--(3.05,-0.95)--(2.74,-0.45)--(3,0.1)--(3.35,-0.65)--(3.275,-1.3);
						
						\node (0) at (0,0.1) {};
						\node (1) at (1,0.1) {};
						\node (2) at (2,0.1) {};
						\node (3) at (3,0.1) {};
						\node (4) at (4,0.1) {};
						\node (5) at (5,0.1) {};
						\node (6) at (1.55,-.8) {};
						\node (7) at (2.2,-.8) {};
						\node (8) at (3.35,-0.65) {};
						\node (9) at (3.275,-1.3) {};
						\node (10) at (-0.35,-.8) {};
						\node (11) at (.35,-.8) {};
						\node (12) at (1.2,-0.65) {};
						\node (13) at (1.1,-1.3) {};
						\node (14) at (3.65,-.8) {};
						\node (15) at (4.35,-.8) {};
						\node (16) at (5.2,-0.65) {};
						\node (17) at (5.1,-1.3) {};
						\node (18) at (2.74,-0.45) {};
						\node (19) at (2.5,-0.95) {};
						\node (20) at (3.05,-0.95) {};
						
						\fill (0) circle (0.06) node [above] {$x_1$};
						\fill (1) circle (0.06) node [above] {$y_2$};
						\fill (2) circle (0.06) node [above] {$x_3$};
						\fill (3) circle (0.06) node [above] {$y_4$};
						\fill (4) circle (0.06) node [above] {$x_5$};
						\fill (5) circle (0.06) node [above] {$y_6$};
						\fill (6) circle (0.06) node [above] {};
						\fill (7) circle (0.06) node [above] {};
						\fill (8) circle (0.06) node [above] {};
						\fill (9) circle (0.06) node [above] {};
						\fill (10) circle (0.06) node [above] {};
						\fill (11) circle (0.06) node [above] {};
						\fill (12) circle (0.06) node [above] {};
						\fill (13) circle (0.06) node [above] {};
						\fill (14) circle (0.06) node [above] {};
						\fill (15) circle (0.06) node [above] {};
						\fill (16) circle (0.06) node [above] {};
						\fill (17) circle (0.06) node [above] {};
						\fill[color=blue] (18) circle (0.06) node [above] {};
						\fill[color=blue] (19) circle (0.06) node [above] {};
						\fill[color=blue] (20) circle (0.06) node [above] {};
					\end{tikzpicture}
					\caption{The graph $F_1$. The edges of $H_{a-1+1}$ are red, vertices in $V(H_{a-1+i})\setminus V(F_0)$ are blue.}\label{figure tree proof F_1}
				\end{subfigure}
				\caption{The graphs $F'=F_0$ and $F_1$ for the example graph $H$ given in Figure~\ref{figure tree proof H} in the case $d>\Delta$, $a=6$.  Adding $H_{a-1+1}$ to $F_0$ to form $F_1$ increases the degree of $y_4$ by $1$ while maintaining properties~\ref{property tree proof H tree},\ref{property tree proof connected components} and~\ref{property tree proof degree of spine}. The set $U$ consists of the vertices above the dashed lines; all vertices below the dashed lines end up in the root in the final rooted $H$-tree $F$.}\label{figure tree proof main figure}
			\end{center}
		\end{figure}
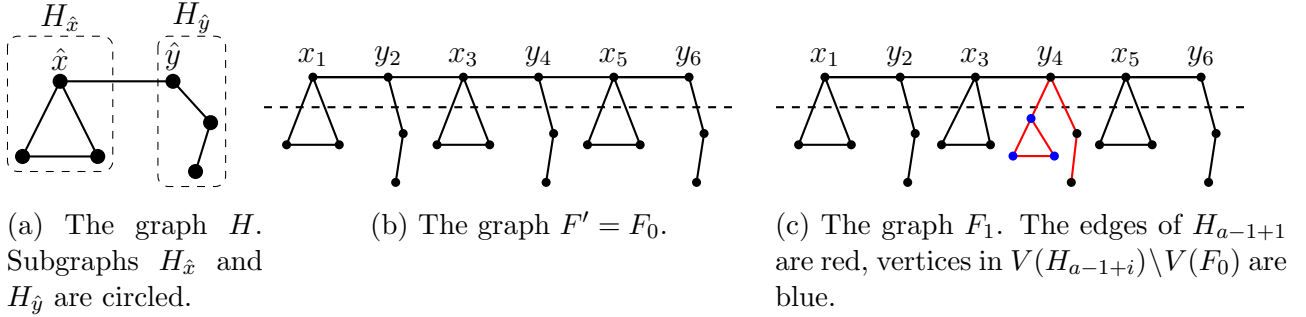

		We will now inductively define a sequence of graphs $F_0\subseteq F_1\subseteq \dots$ with $F'=F_0$ 
		together with a sequence of copies $H_a,H_{a+1},\ldots$ of $H$ that satisfy the following properties for all $i\geq 0$:
		\begin{enumerate}[label=(T\arabic*)]
			\item $F_i$ is an $H$-tree with defining copies $\{H_1,H_2,\dots,H_{a-1+i}\}$,\label{property tree proof H tree}
			\item The connected components of $F_i-U$ are all proper induced subgraphs of $H$,\label{property tree proof connected components}
			\item The map $g:U\to V(T)\setminus R_T$ defined by $g(x_i)=u_i$ for odd $i$ and $g(y_i)=u_i$ for even $i$ is an isomorphism from $F_i-U$ to $T-R_T$ and satisfies
			\begin{equation}\label{equation tree proof degree inequality}
				\deg_{F_i}(u)\le \deg_T(g(u))
			\end{equation}
			for all $u\in U$.\label{property tree proof degree of spine}
		\end{enumerate}
		We will construct this sequence such that the final graph in the sequence will satisfy~\eqref{equation tree proof degree inequality} with equality for all $u\in U$.
		
		Before we describe the sequence of graphs, let us show that $F_0=F'$ satisfies~\ref{property tree proof H tree},~\ref{property tree proof connected components} and~\ref{property tree proof degree of spine}. By repeated applications of Lemma~\ref{lemma iteratively building H-trees}, we note that $F'$ is an $H$-tree with defining copies $\{H_1,H_2,\dots,H_{a-1}\}$, so \ref{property tree proof H tree} is satisfied. Similarly, it is not difficult to see that $F'-U$ is equal to the disjoint union of copies of $H-\hat{x}-\hat{y}$, giving~\ref{property tree proof connected components}. By construction, $F'[U]$ is the path $x_1y_2\dots x_{a-1}y_a$, so $g$ is an isomorphism from $F'-(V(F')\setminus U)$ to $T-R_T$.  Now consider some $u\in U$.  If $d=\Delta$, then since in this case $a=2$, we have $F'\cong H$, so  $\deg_{F'}(u)\leq \Delta=d\leq \deg_T(g(u))$ with us using \Cref{lem:BukhConlon} for the bound on $\deg_T(g(u))$ here.  If instead $d>\Delta$, then we have $\deg_{F'}(u)\leq \Delta+1\leq \lceil d\rceil\leq \deg_T(g(u))$, so $F'$ satisfies~\ref{property tree proof degree of spine}.
		
		Inductively, let us assume that for some $i\geq 1$, we have that $F_{i-1}$ satisfies~\ref{property tree proof H tree},~\ref{property tree proof connected components} and~\ref{property tree proof degree of spine}, and has $F'$ as a subgraph. Let us further assume that there exists $x_j\in U$ such that $\deg_{F_{i-1}}(x_j)<\deg_T(g(x_j))$ (the case where $\deg_{F_{i-1}}(y_j)<\deg_T(g(y_j))$ is symmetric). Define $H_{a-1+i}$ to be a copy of $H$ such that  $H_{a-1+i}^{\hat{x}}=H_j^{\hat{x}}$ and such that it is otherwise disjoint from $F_{i-1}$, and let $F_i:=F_{i-1}\cup H_{a-1+i}$. See Figure~\ref{figure tree proof main figure} for an example of building $F_1$ from $F_0$. By~\Cref{lemma iteratively building H-trees}, $F_i$ is a $H$-tree with defining copies $\{H_1,\dots,H_{a-1+i}\}$, so \ref{property tree proof H tree} holds. Furthermore, $F_i-U\cong (F_{i-1}-U)\sqcup (H-V(H^{\hat{x}}))$, giving \ref{property tree proof connected components}. Finally, for $u\in U$, we have that
		\[
		\deg_{F_i}(u)=\begin{cases}
			\deg_{F_{i-1}}(u)&\text{ if }u\neq x_j,\\
			\deg_{F_{i-1}}(u)+1&\text{ if }u=x_j.
		\end{cases}
		\]
		In either case, using our inductive hypothesis and the fact that $\deg_{F_{i-1}}(x_j)<\deg_T(g(x_j))$, we have that $\deg_{F_i}(u)\leq \deg_T(g(u))$ for all $u\in U$, satisfying \ref{property tree proof degree of spine}.
		
		We can build this sequence of graphs until we reach a graph, say $F$, that satisfies~\ref{property tree proof H tree},~\ref{property tree proof connected components} and~\ref{property tree proof degree of spine}, and also satisfies~\eqref{equation tree proof degree inequality} with equality for all $u\in U$. Let $R:=V(F)\setminus U$. Then,~\ref{property tree proof H tree} and~\ref{property tree proof connected components} give that $(F,R)$ is a rooted $H$-tree with the connected components of $F[R]=F-U$ being proper induced subgraphs of $H$. By Observation~\ref{observation easy balanced condition} along with~\ref{property tree proof degree of spine}, we have further that $(F,R)$ is balanced and of rooted density $d$. Thus, by \Cref{theorem d large tree implies dadmissible} and the hypothesis that all proper induced subgraphs of $H$ are $d$-admissible, $H$ is $d$-admissible.
	\end{proof}
	
	A spiritually similar strategy to the proof of \Cref{lemma cut edge dadmissible}  can be made to work for arbitrary edges, though the argument must be more careful in order to  ensure that each component of $F[R]$ is a subgraph of $H$.
	
	\begin{lemma}\label{lemma arbitrary edge admissible}
		Let $k\in\mathbb{N}$. If $H$ is a connected graph which contains an edge $\hat{x}\hat{y}$ such that $\deg_{H}(\hat{x}),\deg_H(\hat{y})\le k$, and if $d\ge \max\{2k^2,\Delta(H)\}$ is a rational number such that every proper induced subgraph of $H$ is $d$-admissible, then $H$ is $d$-admissible.
	\end{lemma}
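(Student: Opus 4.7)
The plan is to reduce to \Cref{theorem d large tree implies dadmissible} by constructing a balanced rooted $H$-tree $(F,R)$ of rooted density $d$ whose components of $F[R]$ are all proper induced subgraphs of $H$. We mimic the strategy of \Cref{lemma cut edge dadmissible}: write $d = b/a$ with $a$ large and even, take the balanced rooted tree $(T,R_T)$ from \Cref{lem:BukhConlon} with unrooted path $u_1 \cdots u_a$, and try to produce an $H$-tree whose unrooted vertices form a path $v_1 \cdots v_a$ with $\deg_F(v_i) = \deg_T(u_i)$, so that \Cref{observation easy balanced condition} certifies balancedness. The key difficulty compared with \Cref{lemma cut edge dadmissible} is that $H - \hat{x}\hat{y}$ may now be connected, so we cannot split each copy of $H$ into natural halves $H^{\hat{x}}$ and $H^{\hat{y}}$ for gluing.

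To build the skeleton $F_0$, we take copies $H_1,\dots,H_{a-1}$ of $H$ in which the edge $v_i v_{i+1}$ plays the role of $\hat{x}\hat{y}$ in $H_i$; we alternate the orientation of these $\hat{x}\hat{y}$-edges along the spine (so that each interior $v_i$ plays the same role, either $\hat{x}$ or $\hat{y}$, in both $H_{i-1}$ and $H_i$) and keep the non-spine parts of the copies pairwise disjoint. Iterating \Cref{lemma iteratively building H-trees} shows $F_0$ is an $H$-tree, and each component of $F_0[R_0]$ (with $R_0 := V(F_0) \setminus \{v_1,\dots,v_a\}$) is a copy of a component of $H - \hat{x} - \hat{y}$, hence a proper induced subgraph of $H$. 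At this point each spine degree is at most $2k$, while the target $\deg_T(u_i)$ may be as large as $\Theta(d) = \Theta(k^2)$.

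To boost degrees without violating the $H$-tree property or the ``proper induced subgraph'' condition on $F[R]$-components, we repeatedly use two operations, each attaching a fresh copy of $H$ otherwise disjoint from $F$: operation (\textbf{A}) shares only a spine vertex $v_i$ in its existing role, raising $\deg_F(v_i)$ by $\deg_H(\hat{x})$ or $\deg_H(\hat{y})$ and contributing components of $H - \hat{x}$ or $H - \hat{y}$ to the root; operation (\textbf{B}) shares both endpoints of a spine edge $v_i v_{i+1}$ in their existing roles, raising the two relevant degrees by $\deg_H(\hat{x})-1$ and $\deg_H(\hat{y})-1$ and contributing components of $H - \hat{x} - \hat{y}$. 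Both operations preserve the $H$-tree property through \Cref{lemma iteratively building H-trees} (using the identity automorphism of $H$) and introduce only proper induced subgraphs of $H$ as new root components. The main obstacle is the number-theoretic task of choosing counts of (\textbf{A}) and (\textbf{B}) so that $\deg_F(v_i) = \deg_T(u_i)$ exactly for every $i$. Since at an $\hat{x}$-role spine vertex $v_i$ operation (\textbf{A}) adds $\deg_H(\hat{x}) \le k$ while (\textbf{B}) adds $\deg_H(\hat{x}) - 1$, and these are consecutive hence coprime positive integers, every integer exceeding roughly $k^2$ is a non-negative integer combination of them (Frobenius); the hypothesis $d \ge 2k^2$ then ensures the required gap $\deg_T(u_i) - \deg_{F_0}(v_i) \ge d - 2k$ comfortably exceeds this bound. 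The mild coupling of (\textbf{B}) across adjacent spine vertices is resolved by first applying a uniform number of (\textbf{B})-attachments to each spine edge to fix residues modulo $\deg_H(\hat{x})$ and $\deg_H(\hat{y})$, and then finishing each vertex independently with (\textbf{A}). \Cref{theorem d large tree implies dadmissible} applied to the resulting balanced rooted $H$-tree $(F,R)$ then yields that $H$ is $d$-admissible.
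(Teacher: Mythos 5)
Your overall blueprint---build a rooted $H$-tree whose unrooted vertices form a spine matching the unrooted path of a Bukh--Conlon tree, then add copies of $H$ to boost spine degrees until they match $\deg_T(u_i)$, and invoke \Cref{observation easy balanced condition} and \Cref{theorem d large tree implies dadmissible}---is the same as the paper's. However, there is a genuine gap in the number-theoretic step where you try to hit the target degrees $\deg_T(u_i)$ exactly with operations (A) and (B).

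Operation (A) raises a single spine degree by $\deg_H(\hat{x})$ or $\deg_H(\hat{y})$, while (B) raises \emph{two adjacent} spine degrees simultaneously, by $\deg_H(\hat{x})-1$ and $\deg_H(\hat{y})-1$. The Frobenius observation handles representability at each vertex \emph{in isolation}, but the coupling through (B) is not mild: the count $m_j$ of (B)'s on the spine edge $v_jv_{j+1}$ must fix a residue at $v_j$ (mod $\deg_H(\hat{x})$, say) and simultaneously a residue at $v_{j+1}$ (mod $\deg_H(\hat{y})$). A ``uniform number of (B)-attachments'' shifts every interior spine vertex by the same amount, so it cannot correct vertex-dependent residues; and when the $m_j$ are allowed to vary you get a chain of coupled congruences along the spine whose boundary condition at $v_a$ is overdetermined and need not be satisfiable.

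A concrete obstruction: take $H$ with $\deg_H(\hat{x})=\deg_H(\hat{y})=2$ (so $k=2$), for instance $H=C_4$, and take $d=9\ge\max\{2k^2,\Delta(H)\}$. Then (A) changes a single spine degree by $2$ and (B) changes two spine degrees by $1$ each, so \emph{every} operation changes $\sum_i\deg_F(v_i)$ by an even amount. On the other hand, since the roots of the Bukh--Conlon tree form an independent set, $\sum_i\deg_T(u_i)=(a-1)+b$, while in your skeleton $\sum_i\deg_{F_0}(v_i)=4a-4$, so the total required increase is $(a-1+b)-(4a-4)=a(d-3)+3$, which for $d=9$ equals $6a+3$ and is odd for every $a$. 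Hence no choice of $m_j,\beta_i\ge 0$ can realize the target degree vector: the construction cannot be completed.

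The paper avoids this by introducing a second, finer operation that you are missing. Its skeleton places $k$ parallel copies of $H$ on each spine edge (all sharing the $\hat{x}\hat{y}$-edge). The finer operation takes one of these pre-placed copies $H'$ and attaches a fresh copy $H''$ agreeing with $H'$ on $V(H)\setminus\{\hat{y}\}$; this raises $\deg(v_i)$ by exactly $1$, touches no other spine vertex, and replaces a root component $H-\hat{x}-\hat{y}$ with one isomorphic to $H-\hat{x}$, so $F[R]$ stays within $2^H\setminus\{H\}$. Crucially this operation may be applied to each pre-placed $H'$ at most once (otherwise the root component would acquire two clones of $\hat{y}$), which is why the paper first runs (A) until the remaining gap at each spine vertex is at most $k-1$ and pre-assigns to each spine vertex a disjoint collection of $k-1$ copies for the unit bumps. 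This single-vertex, unit-increment move is exactly what breaks the residue coupling and the parity obstruction above; without it, the Frobenius argument on (A) and (B) alone is insufficient.
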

	
	Before getting into the details, let us briefly describe the high-level ideas of the proof.  Similar to \Cref{lemma cut edge dadmissible}, our $H$-tree $F$ will have a path of alternating $x_i,y_i$ vertices which corresponds to the path of some Bukh-Conlon tree $(T,R_T)$.  To this path we will add a large number of copies of $H$ that uses some edge of the path, giving some $H$-tree $F'$, say with $\c{H}$ the collection of copies of $H$ that we used in this construction. It remains to add copies of $H$ to this graph to increase the degrees of the $x_i,y_i$ vertices to match the degrees of $T$, and we can do this in two basic ways.
	
	The simplest way to increase the degree of some $x_i$ vertex is to add a new copy of $H$ which only intersects at $x_i$.  This move adds a new component to $F'[R]$ isomorphic to $H-\hat{x}$ (which is allowed), and increase the degree of $x_i$ by exactly $\deg_H(\hat{x})$, and hence is only allowed when $\deg_T(g(x_i))-\deg_{F'}(x_i)$ is large (i.e.\ when it is at least $\deg_H(\hat{x})$).  
	
	In order to handle the case where this degree difference is small, we will take some $H'\in \c{H}$ which contains $x_i$ and then add some new copy $H''$ of $H$ which agrees with $H'$ on all of $V(H)\sm \{\hat{y}\}$, increasing the degree of $x_i$ by exactly 1 (from the new copy of $\hat{y}$ in $H''$).  This moves removes a component of $F'[R']$ which was isomorphic to $H-\hat{x}-\hat{y}$ (namely that of $H'-x_i-y_{i+1}$ for some $i$) and replaces it with a component isomorphic to $H-\hat{x}$.  Note crucially that if we tried performing this same operation on the same $H'$, then this $H-\hat{x}$ component would gain an additional copy of $\hat{y}$, creating a component in $F'[R']$ which is not contained in $2^H\sm \{H\}$.  In order to avoid this situation, we will pre-assign each vertex $u'$ in the path a distinct collection $\c{H}_{u'}\sub \c{H}$ of $k-1$ copies for it to use for this second kind of move, which will suffice for us to perform this operation to distinct $H'\in \c{H}$ each time.  We now move onto the formal details of the proof.
	
	\begin{proof}
		Let $a,b$ be positive integers with $d=b/a$ such that $a$ is even and $a>k$. Let $(T,R_T)$ be a balanced tree of rooted density $d$ with $a$ unrooted vertices $u_1,u_2,\dots,u_a$, as given in \Cref{lem:BukhConlon}. Our goal is to construct an $H$-tree satisfying the conditions of \Cref{theorem d large tree implies dadmissible} to complete the proof.
		
		We begin by defining a set of $(a-1)k$ copies of $H$, indexed as $H_{i,j}$ with $i\in [a-1]$ and $j\in [k]$, as follows. First let us define each copy of the form $H_{i,1}$ for some $i\in [a-1]$. We start with $H_{1,1}$ an arbitrary copy of $H$. Iteratively, let us assume we have defined $H_{1,1},\ldots,H_{i-1,1}$. If $i$ is even with $2\le i\le a-1$, we define $H_{i,1}$ to be a copy of $H$ whose vertex playing the role of $\hat{y}$ is the same as that of $H_{i-1,1}$ and which is otherwise disjoint from all other vertices of every previous $H_{i',1}$.  Similarly, if $i$ is for odd and $2\le i\le a-1$ we define $H_{i,1}$ to be a copy whose vertex playing the role of $\hat{x}$ is the same as that of $H_{i-1,1}$ and which is otherwise disjoint from all other vertices of every previous $H_{i',1}$.   As in the previous proof, we define for each odd integer $1\le i\le a-1$ the vertex $x_i$ and $y_{i+1}$ to be the vertices of $H_{i,1}$ which plays the role of $\hat{x}$ and $\hat{y}$, respectively.  Let $U:=\{x_1,y_2,\ldots,x_{a-1},y_a\}$, and define $g:U \to V(T)\setminus R_T$ by $g(x_i)=u_i$ for odd $i$ and $g(y_i)=u_i$ for even $i$. 
		
		Having defined $H_{i,1}$ for all $1\le i\le a-1$, we then define $H_{i,j}$ for all $1\le i\le a-1$ and $2\le j\le k$ to be a copy of $H$ whose subgraph corresponding to $e=\hat{x}\hat{y}$ is equal to the subgraph of $H_{i,1}$ corresponding to $e$ and is otherwise pairwise disjoint from all other copies of $H$, i.e. we do this in such a way that the subgraphs $H_{i,j}-(U\cap V(H_{i,j})$ for all $i\in [a-1],j\in [k]$ are pairwise disjoint. We let 
		\[
		F':=\bigcup_{i\in [a-1],j\in [k]}^{}H_{i,j}
		\]
		denote the union of all of these copies. 
		
		It will be useful to have a linear ordering on the set of $H_{i,j}$'s. Towards this end, in a slight abuse of notation, for each $i\in [(a-1)k]$, let $H_i\subseteq \{H_{i',j'}\mid i'\in [a-1],j'\in [k]\}$ denote one of the copies of $H$ we added, in such a way that $H_i$ and $H_{i'}$ are distinct copies for each $i\neq i'$, and such that $H_i=H_{i,1}$ for each $i\in [a-1]$ (the remaining $H_{i,j}$'s can be ordered in any order).
		
		Similar to the proof of \Cref{lemma cut edge dadmissible}, we now want to add more copies of $H$ to $F'$ to increase the degrees of the vertices in $U$ to more closely match the degrees of the vertices in $g(U)$. We will do this in two rounds. In the first round, if there are any vertices $u\in U$ with 
		\begin{equation}\label{equation general edge degree bound less by k}
			\deg_{F'}(u)\leq \deg_{T}(g(u))-k,
		\end{equation}
		then we will add copies of $H$ to increase the degree of $u$ until~\eqref{equation general edge degree bound less by k} is not satisfied for any vertex $u\in U$. Formally, we will define a sequence $F_0\subseteq F_1\subseteq\dots$ with $F'=F_0$ together with a sequence $H_{(a-1)k+1},\ldots$ of copies of $H$ such that the following are satisfied for each $i$:
		
		\begin{enumerate}[label=(G\arabic*)]
			\item $F_i$ is an $H$-tree with defining copies $\{H_1,H_2,\dots,H_{(a-1)k+i}\}$,\label{property general graph proof H tree}
			\item The connected components of $F_i-U$ are all proper induced subgraphs of $H$,\label{property general graph proof connected components}
			\item The map $g$ is an isomorphism from $F_i'-U$ to $T-R_T$ and satisfies
			\begin{equation}\label{equation general graph proof degree inequality}
				\deg_{F_i}(u)\le \deg_T(g(u))
			\end{equation}
			for all $u\in U$.\label{property general graph proof degree of spine}
			\item For $j\in [(a-1)k]$, we have $\displaystyle V(H_j)\cap \bigcup_{j'\in [(a-1)k+i]\setminus \{j\}}H_{j'}\subseteq U$.\label{property general graph proof intersection of H_j}
		\end{enumerate}
		
		Before we describe how to form $F_i$ from $F_{i-1}$, let us prove that $F'=F_0$ satisfies~\ref{property general graph proof H tree} through~\ref{property general graph proof intersection of H_j}. That $F'$ is an $H$-tree follows from iterations of \Cref{lemma iteratively building H-trees}.  It is straightforward to see that the components of $F'-U$ are exactly the disjoint union of each $H_{i,j}-U$ which are all isomorphic to $H-\hat{x}-\hat{y}$. That $g$ is an isomorphism follows from $x_1y_2\cdots x_{a-1}y_a$ being a path.  For the degree condition, we see that for any $u\in U$,
		\[
		\deg_{F'}(u)\leq 2k(\max\{\deg_H(\hat{x}),\deg_H(\hat{y})\}-1)+2\le 2k^2\leq \deg_{T}(g(u)),
		\]
		with this last step using that each unrooted vertex in $T$ has degree at least $d\ge 2k^2$. Finally, the subgraphs $H_{i,j}-(U\cap V(H_{i,j})$ for all $i\in [a-1],j\in [k]$ are pairwise disjoint by construction. Thus, $F_0$ satisfies~\ref{property general graph proof H tree} through~\ref{property general graph proof intersection of H_j}.
		
		Now, via induction, for some $i\geq 1$, let us assume that $F_{i-1}$ satisfies~~\ref{property general graph proof H tree} through~\ref{property general graph proof intersection of H_j}. Furthermore, let us assume there exists some $u\in U$ such that $\deg_{F_{i-1}}(u)\leq \deg_T(g(u))-k$. We will further assume that $u=x_j$ for some odd $j$ (the case when $u=y_j$ is symmetric). Let $H_{(a-1)k+i}$ denote a copy of $H$ with $x_j$ playing the role of $\hat{x}$, but is otherwise disjoint from $F_{i-1}$, and let $F_i:=F_{i-1}\cup H_{(a-1)k+i}$. By \Cref{lemma iteratively building H-trees}, $F_i$ satisfies~\ref{property general graph proof H tree}. Furthermore, it is straightforward to see that $F_i-U\cong (F_{i-1}-U)\sqcup (H-\hat{x})$, so $F_i$ satisfies~\ref{property general graph proof connected components}. Furthermore, we have that for all $u\neq x_j$, $\deg_{F_i}(u)=\deg_{F_{i-1}}(u)\le\deg_{T}(g(u))$ by our inductive hypothesis, and for $x_j$, we have \[\deg_{F_i}(x_j)=\deg_{F_{i-1}}(x_j)+\deg_T(\hat{x})\leq \deg_{F_{i-1}}(x_j)+k\leq \deg_T(g(u)),\] so $F_i$ satisfies~\ref{property general graph proof degree of spine}. Finally, $H_{(a-1)k+i}$ intersects $F_{i-1}$ only in $x_j\in U$, so $F_i$ satisfies~\ref{property general graph proof intersection of H_j}.
		
		We can continue building our sequence of $F_i$'s until we reach some graph $F_{i^*}$ that satisfies~\ref{property general graph proof H tree} through~\ref{property general graph proof intersection of H_j}, and further has the property that $\deg_{F_{i^*}}(u)>\deg_T(g(u))-k$ for all $u\in U$. 
		
		At this point we begin the second round of adding copies of $H$. In this second round, we will add copies until~\eqref{equation general graph proof degree inequality} is satisfied with equality for all $u\in U$. Before we formally describe the process of adding more copies of $H$ to $F_{i^*}$, we need the following claim, which will allow us to guarantee that~\ref{property general graph proof connected components} stays satisfied.
		
		\begin{claim}\label{claim general edge proof we have disjoint collections}
			For all $1\le i\le a$, there exists pairwise disjoint collections $\c{H}_i\sub \{H_{i,j}\mid i\in [a-1],j\in [k]\}$ of size $k-1$ such that for each $i$, every copy in $\c{H}_i$ contains $x_i$ or $y_i$ if $i$ is odd or even, respectively.
		\end{claim}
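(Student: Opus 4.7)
The plan is to exhibit the collections explicitly by reducing the claim to an elementary bipartite allocation problem. First I would record the basic structural fact that for each $i\in[a-1]$, every copy in the pool $P_i:=\{H_{i,j}:j\in[k]\}$ contains exactly two vertices of $U$, namely $u_i$ and $u_{i+1}$, where we write $u_j:=x_j$ for odd $j$ and $u_j:=y_j$ for even $j$. This follows by construction: all $H_{i,j}$ for fixed $i$ agree on the edge playing the role of $\hat{x}\hat{y}$, whose endpoints are $u_i,u_{i+1}$, while the remaining vertices of each $H_{i,j}$ lie outside $U$. Consequently, a copy from $P_i$ may be assigned to $\mathcal{H}_i$ or $\mathcal{H}_{i+1}$, but to no other collection.

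Next I would reformulate the claim as an allocation: for each $i\in[a-1]$, choose disjoint subsets $A_i,B_i\subseteq P_i$, and set $\mathcal{H}_1:=A_1$, $\mathcal{H}_a:=B_{a-1}$, and $\mathcal{H}_j:=A_j\cup B_{j-1}$ for $2\le j\le a-1$. Pairwise disjointness of the $\mathcal{H}_j$ is automatic (copies in distinct pools are distinct, and within a pool the selected subsets $A_i,B_i$ are disjoint by construction), so the only constraints are $|A_i|+|B_i|\le k$ for all $i\in[a-1]$, together with the size requirements $|A_1|=k-1$, $|B_{a-1}|=k-1$, and $|A_j|+|B_{j-1}|=k-1$ for $2\le j\le a-1$.

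Finally I would verify the allocation exists. Setting $|B_i|:=\min(i,k-1)$ and $|A_i|:=k-1-\min(i-1,k-1)$, one checks directly that $|A_i|+|B_i|\le k$ for every $i\in[a-1]$ and that the boundary and adjacency equalities listed above are satisfied; in particular, achieving $|B_{a-1}|=k-1$ requires only $a-1\ge k-1$, which is guaranteed by the hypothesis $a>k$ from the surrounding proof. Since $|P_i|=k$, for each $i$ one can pick arbitrary disjoint subsets $A_i,B_i\subseteq P_i$ of the prescribed cardinalities, completing the construction.

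The only potential obstacle is bookkeeping: confirming that every size constraint is met simultaneously. But the recursion $|B_i|\le 1+|B_{i-1}|$ forced by $|A_i|+|B_i|\le k$ and $|A_i|=k-1-|B_{i-1}|$ shows that the ramp-up $|B_i|=\min(i,k-1)$ is exactly tight enough, and the hypothesis $a>k$ leaves room for $|B_{a-1}|$ to reach $k-1$. No genuinely hard step is involved.
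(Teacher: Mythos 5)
Your proof is correct and, at the level of the actual construction, coincides with the paper's: the sizes you prescribe, $|A_i|=k-1-\min(i-1,k-1)$ and $|B_i|=\min(i,k-1)$, produce exactly the collections the paper writes out explicitly ($\c{H}_1=\{H_{1,j}:j\le k-1\}$, $\c{H}_i=\{H_{i-1,j}:k-i+2\le j\le k\}\cup\{H_{i,j}:j\le k-i\}$ for $2\le i\le k$, and $\c{H}_i=\{H_{i-1,j}:j\le k-1\}$ for $i>k$). You simply recast the bookkeeping as a small allocation problem with constraints $|A_1|=k-1$, $|B_{a-1}|=k-1$, $|A_j|+|B_{j-1}|=k-1$, and $|A_i|+|B_i|\le k$, which makes the role of the hypothesis $a>k$ and the tightness of the ``ramp-up'' more transparent than the paper's direct listing; this is a modest presentational gain, not a different argument.
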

		
		\begin{proof}
			Let $\c{H}_1$ consist of all the copies of the form $H_{1,j}$ with $1\le j\le k-1$.  For each $i$ with $2\le i \le k$ we let $\c{H}_i$ consist of the copies $H_{i-1,j}$ for $k-i+2\le j\le k$ together with the copies $H_{i,j}$ for $1\le j\le k-i$.  For all $k<i\le a$ we let $\c{H}_i$ consist of the copies $H_{i-1,j}$ for all $1\le j\le k-1$.  Note that in this definition we implicitly used that $a>k$ in order to guarantee that $H_{i,j}$ exists for all $2\le i\le k$.
			
			It is straightforward to check that each of the collections defined are pairwise disjoint (since e.g.\ each $\c{H}_{i-1}$ collection only uses $H_{i,j}$ with $1\le j \le k-i+1$) and have size exactly $k-1$ (since e.g.\ the $i$ with $2\le i\le k$ have size $(i-1)+(k-i)=k-1$). Furthermore, each $\mathcal{H}_i$ contains only copies of the form $H_{i,j}$ and $H_{i-1,j'}$, which intersect in $x_i$ or $y_i$, if $i$ is odd or even respectively.
		\end{proof}
		
		Now, starting from $F_{i^*}$, we continue to build a sequence of graphs $F_0\subseteq \dots\subseteq F_{i^*}\subseteq F_{i^*+1}\subseteq\dots$and copies $H_{(a-1)k+i}$; however for $i>i^*$, we no longer require $F_i$ to satisfy~\ref{property general graph proof intersection of H_j}. Instead, we still require $F_i$ to satisfy~\ref{property general graph proof H tree},~\ref{property general graph proof connected components},~\ref{property general graph proof degree of spine}, and also the following property:
		
		\begin{enumerate}[label=(G\arabic*), start=5]
			\item For each $j\in [a]$, with $u=x_j$ or $u=y_j$ depending on the parity of $j$, there are at least $\deg_T(g(u))-\deg_{F_i}(u)$ copies $H'\in \mathcal{H}_j$ of $H$ such that $V(H')\cap V(H'')\subseteq U$ for all $H''\in \{H_1,\dots,H_{(a-1)k+i}\}\setminus\{H'\}$.\label{property general graph proof keeping the disjoint choices}
		\end{enumerate}
		
		We first note that $\deg_T(g(u))-\deg_{F_{i^*}}(u)\leq k-1=|\mathcal{H}_j|$ for all $j\in [a]$ by the definition of $i^*$. This, along with the fact that $F_{i^*}$ satisfies~\ref{property general graph proof intersection of H_j} yields that $F_{i^*}$ also satisfies~\ref{property general graph proof keeping the disjoint choices}.
		
		Now, let us assume by induction that for some $i>i^*$, $F_{i-1}$ satisfies~\ref{property general graph proof H tree},~\ref{property general graph proof connected components},~\ref{property general graph proof degree of spine} and~\ref{property general graph proof keeping the disjoint choices}. Further, let us assume that there exists a vertex $u\in U$ such that $\deg_{F_{i-1}}(u)<\deg_{T}(g(u))$. Let $j\in [a]$ be such that $u=x_j$ or $u=y_j$, and let us assume that $u=x_j$ (the case where $u=y_j$ is symmetric). Let $H'\in \mathcal{H}$ be such that $V(H')\cap V(H'')\subseteq U$ for all $H''\in \{H_1,\dots,H_{(a-1)k+i}\}\setminus\{H'\}$ (which is guaranted to exist by~\ref{property general graph proof keeping the disjoint choices} and the fact that $\deg_{F_{i-1}}(u)<\deg_{T}(g(u))$), and define $H_{a(k-1)+i}$ to be a copy of $H$ such that $H_{a(k-1)+i}$ intersects $H'$ in the subgraph of $H'$ corresponding to $H-\hat{y}$, and is otherwise disjoint from $F_{i-1}$. Let $F_i=F_{i-1}\cup H_{(a-1)k+i}$.
		
		\Cref{lemma iteratively building H-trees} gives~\ref{property general graph proof H tree}. Furthermore, since $V(H')\cap V(H'')\subseteq U$ for all $H''\in \{H_1,\dots,H_{(a-1)k+i}\}\setminus\{H'\}$, we have that $H'$ is the only copy of $H$ in $\{H_1,\dots,H_{(a-1)k+i-1}\}$ that intersects $H_{(a-1)k+i}$ outside of $U$, giving us two consequences. The first consequence is that the components of $F_i-U$ are exactly the components of $F_{i-1}-U$, except in place of $H'-(U\cap V(H'))$, we have $H_{(a-1)k+i}-x_j\cong H-\hat{x}$, so~\ref{property general graph proof connected components} is satisfied. The second consequence being that ~\ref{property general graph proof keeping the disjoint choices} holds; for $j'\in [a]\setminus \{j\}$, the copies of $H$ in $\mathcal{H}_{j'}$ were not affected outside $U$, and only one copy in $\mathcal{H}_j$ was affected, while the degree of $x_j$ in $F_i$ is one higher than in $F_{i-1}$. Finally,~\ref{property general graph proof degree of spine} holds for $F_i$ since $H_{a(k-1)+i}$ does not affect the degree of any $u'\in U$, $u'\neq x_j$, and it increases the degree of $x_j$ by one (compared to $\deg_{F_{i-1}}(x_j)$), so since we had by assumption that $\deg_{F_{i-1}}(x_j)<\deg_T(g(x_j))$, we have that $\deg_{F_{i}}(x_j)\leq \deg_T(g(x_j))$.
		
		Thus, we can continue building this sequence in the second round until we reach a graph, call it $F$, that satisfies~\ref{property general graph proof H tree},~\ref{property general graph proof connected components} and~\ref{property general graph proof degree of spine}, and furthermore such that each vertex $u\in U$ satisfies~\eqref{equation general graph proof degree inequality} with equality. Let $R:=V(F)\setminus U$. We finish the proof by noting that $F$ satisfies the conditions for \Cref{theorem d large tree implies dadmissible}.  Indeed, \ref{property general graph proof H tree} and~\ref{property general graph proof connected components} give that $(F,R)$ is a rooted $H$-tree with the connected components of $F[R]=F-U$ being proper induced subgraphs of $H$. By Observation~\ref{observation easy balanced condition} along with~\ref{property general graph proof degree of spine}, we have further that $(F,R)$ is balanced and of rooted density $d$.  Finally by hypothesis all proper induced subgraphs of $H$ are $d$-admissible, so $H$ is $d$-admissible by \Cref{theorem d large tree implies dadmissible}.
	\end{proof}
	
	We will now show how to use \Cref{lemma cut edge dadmissible} and \Cref{lemma arbitrary edge admissible} to prove Theorems~\ref{theorem:maxDegreeGeneral} and \ref{theorem:maxDegreeTrees}.  In fact, we can prove a common generalization of these two theorems via the following technical definition.
	\begin{definition}
		We say that a graph $H$ is \textbf{$k$-edge-degenerate} for some non-negative integer $k$ if every non-empty $H'\sub H$ either contains a cutedge or contains an edge $e=\hat{x}\hat{y}$ with $\deg_{H'}(\hat{x}),\deg_{H'}(\hat{y})\le k$.
	\end{definition}
	
	This definition can be thought of as a technical variant of the usual notion a graph being $k$-degenerate which is defined as saying that every induced subgraph contains a vertex of degree at most $k$.  We note for our forthcoming proof that if $H$ is $k$-edge degenerate then every subgraph of $H$ is also $k$-edge degenerate.
	
	\begin{theorem}\label{thm:degenerate}
		If $H$ is a graph of maximum degree at most $\Delta$ which is $k$-edge-degenerate, then every rational in
		\[
		\left[v(H)-\frac{e(H)}{\max\{2k^2,\Delta\}},\ v(H)\right]
		\]
		is realizable for $H$.
	\end{theorem}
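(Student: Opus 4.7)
The plan is to prove by induction on $v(H)$ that every graph $H$ with $\Delta(H) \leq \Delta$ which is either $k$-edge-degenerate or edgeless is $d$-admissible for every rational $d \geq M := \max\{2k^2,\Delta\}$. Given this, the theorem will follow almost immediately: for any rational $r \in [v(H) - e(H)/M,\ v(H))$ with $e(H) > 0$, setting $d := e(H)/(v(H)-r)$ gives a rational $\geq M$, so $H$ is $d$-admissible and $r = v(H) - e(H)/d$ is realizable by Observation \ref{observation admissible implies realizable}. For the endpoint $r = v(H)$, I would take $\c{F} = \{K_{v(H)+1}\}$: the Tur\'an graph $T_{v(H)}(n)$ is $K_{v(H)+1}$-free and contains $\Theta(n^{v(H)})$ copies of $H$ (by sending each vertex of $H$ to a distinct part), while trivially no $n$-vertex graph has more than $O(n^{v(H)})$ copies of $H$.

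The induction will use Observation \ref{observation K1 is easy} as the base case $v(H) = 1$, since $K_1$ is edgeless. For the inductive step I would use that each of the three properties---being edgeless, being $k$-edge-degenerate, and having maximum degree at most $\Delta$---is inherited by every induced subgraph and by every connected component, which is immediate from the definitions, so that the inductive hypothesis applies freely. If $H$ is edgeless, then $H$ is a disjoint union of copies of $K_1$ and is $d$-admissible by iterating Observation \ref{observation disjoint union of dadmissible}. If $H$ is $k$-edge-degenerate but disconnected, each component has strictly fewer vertices and is either $k$-edge-degenerate or equal to $K_1$, so by induction each component is $d$-admissible and $H$ itself is $d$-admissible by the same observation.

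The substantive case is when $H$ is connected and $k$-edge-degenerate. Here every proper induced subgraph has strictly fewer vertices and satisfies the inductive hypothesis, so by induction every proper induced subgraph of $H$ is $d$-admissible. Applying the $k$-edge-degeneracy hypothesis to $H$ itself yields either a cut-edge of $H$ or an edge $\hat{x}\hat{y}$ with $\deg_H(\hat{x}), \deg_H(\hat{y}) \leq k$. In the first case I would invoke Lemma \ref{lemma cut edge dadmissible} (using $d \geq M \geq \Delta$) to conclude $H$ is $d$-admissible, and in the second case Lemma \ref{lemma arbitrary edge admissible} (using $d \geq M \geq \max\{2k^2,\Delta(H)\}$) gives the same conclusion.

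I do not expect any significant obstacle here, since all of the substantive combinatorics has already been packaged into Lemmas \ref{lemma cut edge dadmissible} and \ref{lemma arbitrary edge admissible}, and the notion of $k$-edge-degeneracy is precisely engineered so that at least one of these two lemmas is available at every inductive step on a connected graph. The only small care required is to broaden the inductive statement to cover edgeless graphs, which are not themselves $k$-edge-degenerate under the stated definition but do arise as induced subgraphs and as connected components along the induction.
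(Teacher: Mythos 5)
Your proof is correct and takes essentially the same approach as the paper: both prove the stronger claim that $H$ is $d$-admissible for every rational $d\ge\max\{2k^2,\Delta\}$, reduce to connected $H$ via \Cref{observation disjoint union of dadmissible}, use $K_1$ as the base case, and then invoke \Cref{lemma cut edge dadmissible} or \Cref{lemma arbitrary edge admissible} according to which kind of edge the $k$-edge-degeneracy condition produces (the paper phrases the induction as a minimal counterexample argument). Two small remarks: edgeless graphs are in fact vacuously $k$-edge-degenerate under the paper's definition (``non-empty'' there means having at least one edge, so the condition is void), so the broadening of the inductive statement is unnecessary; and your explicit treatment of the endpoint $r=v(H)$, which $d$-admissibility alone does not supply for any finite $d$, is a detail the paper leaves implicit.
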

	
	We note that this result immediately implies Theorems~\ref{theorem:maxDegreeGeneral} and \ref{theorem:maxDegreeTrees}, as every graph $H$ of maximum degree $\Delta$ is trivially $\Delta$-edge-degenerate, and every tree is trivially $0$-edge-degenerate since every edge of a tree is a cutedge.  More generally, \Cref{thm:degenerate} can be used to prove that a number of other graphs besides just trees have every rational above $v(H)-e(H)/\Delta$ being realizable.  For example, this holds for the graph $H$ consisting of $t\ge 4$ triangles all sharing a common vertex, as this graph can easily be seen to have maximum degree $\Delta=2t$ and to be $2$-edge-degenerate.
	
	\begin{proof}
		We will prove the stronger fact that every such $H$ is $d$-admissible for every rational $d\ge \max\{2k^2,\Delta\}$, which implies this statement by \Cref{observation admissible implies realizable}.  Indeed, assume there exist some parameters $k,d,\Delta$ such that this fails for some $H$, and choose such an $H$ with as few vertices as possible.  In particular, because each induced subgraph of $H$ is $k$-edge-degenerate and has maximum degree at most $\Del$, each proper induced subgraph of $H$ must be $d$-admissible by the minimality of $H$.
		
		Note that if $H$ is disconnected then each connected component is $d$-admissible by our comments above, so \Cref{observation disjoint union of dadmissible} implies that $H$ is $d$-admissible, a contradiction.  We may thus assume that $H$ is connected.  Similarly, if $H$ is a single vertex then it is $d$-admissible for all $d$, so we may assume that $H$ contains at least two vertices.
		
		Since $H$ is $k$-edge-degenerate, there exists some edge $\hat{x}\hat{y}$ of $H$ satisfying either (a) $\deg_H(\hat{x})\leq k$ and $\deg_H(\hat{y})\leq k$ or (b) the edge is a cutedge. If (a) occurs then $H$ is $d$-admissible by \Cref{lemma arbitrary edge admissible}, while if (b) occurs, then \Cref{lemma cut edge dadmissible} implies that $H$ is $d$-admissible, giving the desired contradiction.
	\end{proof}
	
	\section{Non-Realizable Exponents}\label{section non-realizable}
	
	This section is organized as follows.  In \Cref{subsection non-realizable tools} we state without proof the main tools that we need for showing the non-realizability of rational exponents.  We prove almost all of our results in \Cref{subsection non-realizable main} except for one technical result \Cref{theorem:treeReduction} which we postpone until \Cref{subsection tree reduction}.
	
	\subsection{Key Tools}\label{subsection non-realizable tools}
	
	Our approach for non-realizability of exponents for graphs $H$ is inspired by the proof of non-realizability for stars given in~\cite{EHK2025}. In their proof, they make the simple but crucial observation that if $\c{F}$ is a family which does not contain a subgraph of a star, then $\ex(n,K_{1,t-1},\c{F})=\Om(n^{t-1})$ since the graph $G=K_{1,n-1}$ is $\c{F}$-free.  On the other hand, if $\c{F}$ does contain a star, then $\ex(n,K_{1,t-1},\c{F})=O(n)$ since each vertex will be in at most $O(1)$ stars.  From this one deduces that no exponent in $(1,t-1)$ is realizable for $K_{1,t-1}$.
	
	Similar to the argument above, we will prove the non-realizability of exponents for graphs $H$ by constructing a ``simple'' family of graphs $\c{F}_H$ such that the behavior of $\ex(n,H,\c{F})$ will largely depend on whether or not $\c{F}$ contains (a subgraph of) each graph in $\c{F}_H$.   To this end, we make the following crucial definition.
	
	\begin{definition}\label{definition flower powers}
		Given a graph $H$, a set of vertices $R\sub V(H)$, and an integer $\pow$, we define the graph $H_R^\pow$ to be the graph obtained by taking the union of $\pow$ copies of $H$ such that each of the copies agree on the set of vertices $R$ and are otherwise disjoint.  For each integer $k\ge 1$ we define
		\[
		\c{F}_{H,k}^\pow:=\{H_R^\pow: R\subsetneq V(H),\ H-R\textrm{ has at least }k\textrm{ connected components}\}.
		\]
	\end{definition}
	
	See Figure~\ref{figure flower power definition full figure} for an example of a graph of the form $H_R^\pow$.
	
	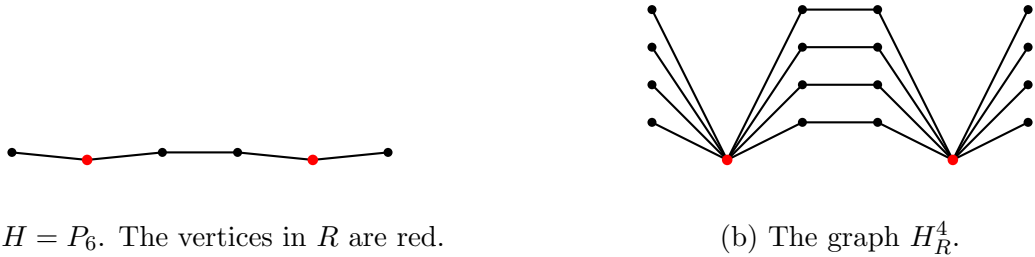
\begin{figure}[hb!]
		\begin{center}
			\begin{subfigure}[t]{0.4\textwidth}
				\begin{center}\begin{tikzpicture}
						\draw[thick] (0,0.1)--(1,0)--(2,0.1)--(3,0.1)--(4,0)--(5,0.1);
						
						\node (0) at (0,0.1) {};
						\node (1) at (1,0) {};
						\node (2) at (2,0.1) {};
						\node (3) at (3,0.1) {};
						\node (4) at (4,0) {};
						\node (5) at (5,0.1) {};
						
						\fill (0) circle (0.06) node [above] {};
						\fill[color=red] (1) circle (0.07) node [above] {};
						\fill (2) circle (0.06) node [above] {};
						\fill (3) circle (0.06) node [above] {};
						\fill[color=red] (4) circle (0.07) node [above] {};
						\fill (5) circle (0.06) node [above] {};
					\end{tikzpicture}
				\end{center}
				\caption{$H=P_6$. The vertices in $R$ are red.}\label{figure flower power definition path}
			\end{subfigure}\hspace{0.1\textwidth}\begin{subfigure}[t]{0.4\textwidth}
				\begin{center}
					\begin{tikzpicture}
						\draw[thick] (0,0.5)--(1,0)--(2,0.5)--(3,0.5)--(4,0)--(5,0.5);
						\draw[thick] (0,1)--(1,0)--(2,1)--(3,1)--(4,0)--(5,1);
						\draw[thick] (0,1.5)--(1,0)--(2,1.5)--(3,1.5)--(4,0)--(5,1.5);
						\draw[thick] (0,2)--(1,0)--(2,2)--(3,2)--(4,0)--(5,2);
						
						\node (0) at (0,0.5) {};
						\node (1) at (1,0) {};
						\node (2) at (2,0.5) {};
						\node (3) at (3,0.5) {};
						\node (4) at (4,0) {};
						\node (5) at (5,0.5) {};
						
						\node (6) at (0,1.5) {};
						\node (7) at (2,1.5) {};
						\node (8) at (3,1.5) {};
						\node (9) at (5,1.5) {};
						
						\node (10) at (0,1) {};
						\node (11) at (2,1) {};
						\node (12) at (3,1) {};
						\node (13) at (5,1) {};
						
						\node (14) at (0,2) {};
						\node (15) at (2,2) {};
						\node (16) at (3,2) {};
						\node (17) at (5,2) {};
						
						\fill (0) circle (0.06) node [above] {};
						\fill[color=red] (1) circle (0.07) node [above] {};
						\fill (2) circle (0.06) node [above] {};
						\fill (3) circle (0.06) node [above] {};
						\fill[color=red] (4) circle (0.07) node [above] {};
						\fill (5) circle (0.06) node [above] {};
						\fill (6) circle (0.06) node [above] {};
						\fill (7) circle (0.06) node [above] {};
						\fill (8) circle (0.06) node [above] {};
						\fill (9) circle (0.06) node [above] {};
						\fill (10) circle (0.06) node [above] {};
						\fill (11) circle (0.06) node [above] {};
						\fill (12) circle (0.06) node [above] {};
						\fill (13) circle (0.06) node [above] {};
						\fill (14) circle (0.06) node [above] {};
						\fill (15) circle (0.06) node [above] {};
						\fill (16) circle (0.06) node [above] {};
						\fill (17) circle (0.06) node [above] {};
					\end{tikzpicture}
				\end{center}
				\caption{The graph $H_R^4$.}\label{figure flower power definition H_R^4}
			\end{subfigure}
		\end{center}
		\caption{An example of the graph $H_R^4$ from \Cref{definition flower powers} when $H$ is the path on $6$ vertices and $R$ consists of the set of red vertices in Figure~\ref{figure flower power definition path}. $H_R^4$ is a member of the family $\mathcal{F}_{H,3}^4$ since $H-R$ has three connected components.}\label{figure flower power definition full figure}
	\end{figure}

	Note that $H_R^\pow$ can also be viewed as the element of $(H,R)^\pow$ which has the maximum number of vertices.  The key fact we need regarding non-realizable exponents is the following.
	\begin{proposition}\label{proposition:keyObservation}
		For every graph $H$, integer $k\ge 1$, and family of graphs $\c{F}$, either $\ex(n,H,\c{F})=\Om_{H,k}(n^k)$ or there exists an integer $q$ such that every $\c{F}$-free graph is also $\c{F}_{H,k}^q$-free.  In particular, either $\ex(n,H,\c{F})=\Om_{H,k}(n^k)$ or $\ex(n,H,\c{F}) \leq \ex(n,H,\c{F}_{H,k}^\pow)$.
	\end{proposition}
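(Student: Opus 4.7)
The plan is to prove the contrapositive: assuming that no integer $q$ causes every $\c{F}$-free graph to be $\c{F}_{H,k}^q$-free, I will derive $\ex(n,H,\c{F})=\Om_{H,k}(n^k)$. The ``in particular'' conclusion is then immediate, since the second alternative exhibits every $\c{F}$-free $n$-vertex graph as an $\c{F}_{H,k}^q$-free graph, bounding its number of $H$-copies by $\ex(n,H,\c{F}_{H,k}^q)$. Under the contrapositive hypothesis, for every $q$ some $\c{F}$-free graph contains a member $H_{R_q}^q$ of $\c{F}_{H,k}^q$ as a subgraph; since subgraphs of $\c{F}$-free graphs are themselves $\c{F}$-free, the graph $H_{R_q}^q$ is $\c{F}$-free for every $q$.

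The next step is a pigeonhole argument. There are only finitely many $R\subsetneq V(H)$ such that $H-R$ has at least $k$ components, so some particular $R^*$ arises as $R_q$ for arbitrarily large values of $q$. Observing that $H_{R^*}^{q'}$ is a subgraph of $H_{R^*}^q$ whenever $q'\le q$, this upgrades to the statement that $H_{R^*}^q$ is $\c{F}$-free for every positive integer $q$.

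The main combinatorial estimate is that $H_{R^*}^q$ contains at least $q^k$ copies of $H$. Writing the connected components of $H-R^*$ as $C_1,\dots,C_\ell$ with $\ell\ge k$, the graph $H_{R^*}^q$ is built from $R^*$ together with $q$ vertex-disjoint ``branches'' (one for each of the $q$ copies of $H$ used in its construction), where the $i$-th branch contains a copy $C_{i,j}$ of each $C_j$ attached to $R^*$ in the same way $C_j$ is attached in $H$. For any indices $i_1,\dots,i_k\in [q]$, the subgraph of $H_{R^*}^q$ on $R^*\cup\bigcup_{j=1}^k V(C_{i_j,j})$ has precisely the edges of $H$ (since components of $H-R^*$ have no edges between them and distinct branches share no vertices outside $R^*$), yielding a copy of $H$. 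Distinct tuples $(i_1,\dots,i_k)$ produce distinct vertex sets, so we obtain at least $q^k$ distinct copies of $H$.

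Finally, given $n$, choose $q$ maximal with $v(H_{R^*}^q)\le n$, so that $q=\Theta_H(n)$. Padding $H_{R^*}^q$ with isolated vertices to reach exactly $n$ vertices produces an $n$-vertex $\c{F}$-free graph containing $\Om_{H,k}(q^k)=\Om_{H,k}(n^k)$ copies of $H$, completing the argument. (If $\c{F}$ happens to contain an edgeless graph, then the paper's convention forces $\ex(n,H,\c{F})=0$ for large $n$ and the statement is vacuous.) The delicate step is the pigeonhole argument, which crucially exploits the monotonicity $H_R^{q'}\subseteq H_R^q$ to extract a single $R^*$ serving all $q$ simultaneously; the counting step, while central, is straightforward once this $R^*$ is in hand.
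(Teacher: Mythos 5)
Your proof is correct in substance and follows the same approach as the paper's, which rather than arguing ``if not B then A'' simply splits into two exhaustive cases according to whether, for every $R\subsetneq V(H)$ with $H-R$ having at least $k$ components, $\c{F}$ contains a subgraph of $H_R^q$ for some $q$; the content is identical. The one slip to fix is in your counting step: you write the components of $H-R^*$ as $C_1,\dots,C_\ell$ with $\ell\ge k$, but then select only $k$ indices $i_1,\dots,i_k$ and take the subgraph on $R^*\cup\bigcup_{j=1}^{k}V(C_{i_j,j})$; when $\ell>k$ this omits $\ell-k$ of the components and is therefore not a copy of $H$. You should pick an index $i_j\in[q]$ for each of the $\ell$ components (this is what the paper's auxiliary counting lemma does), which yields $q^\ell\ge q^k$ distinct copies and leaves the rest of the argument intact. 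Finally, the padding with isolated vertices at the end is sound under your pigeonhole conclusion: any $F\in\c{F}$ embedding into the padded graph would embed its non-isolated core into $H_{R^*}^q$, and then $F$ itself would embed into $H_{R^*}^{q''}$ for a sufficiently large $q''$, contradicting the $\c{F}$-freeness of $H_{R^*}^{q''}$ that you derived; so the parenthetical about edgeless graphs, while not phrased quite precisely (the proposition is not vacuous there, the second alternative just holds automatically), is not actually needed.
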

	
	This result immediately gives the following, which will be the foundation of every non-realizability result in this paper.
	\begin{corollary}\label{corollary:keyObservation}
		If $H$ is a graph, $k\ge 1$ is an integer, and $\re>0$ is real such that for all $\pow$ we have $\ex(n,H,\c{F}_{H,k}^\pow)=o_\pow(n^{\re})$, then no rational in $[\re,k)$ is realizable for $H$.
	\end{corollary}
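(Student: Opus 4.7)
The plan is to argue by contradiction using \Cref{proposition:keyObservation} directly. Suppose some rational $r'\in [\re,k)$ is realizable for $H$, so there exists a finite family of graphs $\c{F}$ with $\ex(n,H,\c{F})=\Theta(n^{r'})$; in particular $\ex(n,H,\c{F})=\Omega(n^{r'})$ and $\ex(n,H,\c{F})=O(n^{r'})$.

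Apply \Cref{proposition:keyObservation} to $H$, the integer $k$, and the family $\c{F}$, yielding two cases. In the first case, $\ex(n,H,\c{F})=\Omega_{H,k}(n^{k})$. Since $r'<k$, this growth rate is incompatible with the upper bound $\ex(n,H,\c{F})=O(n^{r'})$, giving a contradiction for all sufficiently large $n$. In the second case, there exists an integer $\pow$ such that every $\c{F}$-free graph is also $\c{F}_{H,k}^{\pow}$-free, which yields $\ex(n,H,\c{F})\leq \ex(n,H,\c{F}_{H,k}^{\pow})$. By the hypothesis of the corollary applied to this particular $\pow$, we have $\ex(n,H,\c{F}_{H,k}^{\pow})=o_{\pow}(n^{\re})$. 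Combined with $\re\leq r'$, this gives $\ex(n,H,\c{F})=o(n^{r'})$, contradicting the lower bound $\ex(n,H,\c{F})=\Omega(n^{r'})$ coming from realizability.

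In both cases we reach a contradiction, so no rational in $[\re,k)$ is realizable for $H$. There is no genuine obstacle here: \Cref{proposition:keyObservation} does all of the real work, and the corollary is essentially just the dichotomy of the proposition packaged as a non-realizability criterion. The only points that require any care are keeping track of the dependence of the hidden constant in $o_{\pow}(\cdot)$ on $\pow$ (which is harmless since $\pow$ is fixed once the dichotomy is applied) and noting that the strict inequality $r'<k$ is what rules out the first case.
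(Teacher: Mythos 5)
Your argument is correct and is precisely the routine dichotomy argument the paper intends when it says \Cref{proposition:keyObservation} ``immediately gives'' the corollary (the paper provides no explicit proof). The use of $r'<k$ to rule out the $\Omega(n^k)$ branch and $r'\ge \re$ to contradict the lower bound in the other branch is exactly right.
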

	
	We have now reduced the problem of showing non-realizability of exponents to proving upper bounds on $\ex(n,H,\c{F}_{H,k}^\pow)$.   For example, if $H=K_{1,t-1}$ then one can check that $\c{F}_{H,t-1}^\pow$ consists only of a star $K_{1,q'}$, so by \Cref{corollary:keyObservation} one can show that no exponent in $(1,t-1)$ is realizable simply by showing that $\ex(n,K_{1,t-1},K_{1,\pow'})=O_{\pow'}(n)$, which is easy to do.
	
	Estimating $\ex(n,H,\c{F}_{H,k}^\pow)$ seems to be a difficult problem in general.  However, in the case of trees we can significantly reduce the difficulty through the following.
	
	\begin{theorem}\label{theorem:treeReduction}
		If $T\ne K_1$ is a tree and if $\c{F}$ is a family of graphs such that every $\c{F}$-free graph is also $\c{F}_{T,k}^q$-free for some integers $k,\pow\ge 1$, then
		\[
		\ex(n,T,\c{F})=O_{T,k,q}(\ex(n,\c{F})^{k-1}).
		\]
	\end{theorem}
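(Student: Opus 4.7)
The plan is to induct on $v(T)$. The base case $T=K_2$ is direct: $\mon(K_2,G)=2e(G)\le 2\ex(n,\c{F})$, which is $O(\ex(n,\c{F})^{k-1})$ for $k\ge 2$. For $k=1$, the only elements of $\c{F}_{K_2,1}^q$ up to isomorphism are the star $K_{1,q}$ and the matching $qK_2$, so $\c{F}$-freeness forces both $\Delta(G)<q$ and $\nu(G)<q$, yielding $e(G)=O(1)$. For the inductive step with $v(T)\ge 3$, the case $k=1$ follows from a direct sunflower argument: if $\mon(T,G)$ exceeds a constant $C(T,q)$, then the Erd\H{o}s--Ko--Rado sunflower lemma applied to the sets $\phi(V(T))\subseteq V(G)$, together with pigeonhole on the bijection between the sunflower kernel and a subset of $V(T)$, produces $q$ monomorphisms that agree on some $R\subsetneq V(T)$ and are pairwise disjoint off $R$, giving a copy of $T_R^q$ in $G$ and contradicting $\c{F}_{T,1}^q$-freeness.

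For the main case $k\ge 2$, pick a leaf $\hat{v}$ of $T$ whose neighbor $\hat{u}$ satisfies $\deg_T(\hat{u})\ge k$, and set $T':=T-\hat{v}$. The elementary inequality
\[
\mon(T,G)\le\sum_{\psi\in\mon(T',G)}\deg_G(\psi(\hat{u}))=\sum_{y\in V(G)}\deg_G(y)\cdot M(y),
\]
where $M(y):=|\{\psi\in\mon(T',G):\psi(\hat{u})=y\}|$, is then decomposed along a degree threshold $D=C\cdot\ex(n,\c{F})$. The low-degree part contributes at most $D\cdot\mon(T',G)$, which I would bound by $O(\ex(n,\c{F})^{k-2})$ via the induction hypothesis applied to $T'$ with parameter $k-1$; to close the induction I would verify that $\c{F}$-freeness implies $\c{F}_{T',k-1}^{q'}$-freeness via a lifting argument, promoting a copy of $(T')_{R'}^{q'}$ in $G$ (when the shared image of $\hat{u}$ has sufficiently large degree) to a copy of $T_{R'}^{q}$ by attaching $q$ distinct pendants at that image, with the low-degree failure mode of the lifting handled by the ambient $\c{F}_{T,k}^q$-free constraint. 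Combined with $D=O(\ex(n,\c{F}))$, the low-degree contribution is $O(\ex(n,\c{F})^{k-1})$.

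For the high-degree part, $M(y)$ for $y$ with $\deg_G(y)>D$ is bounded by a constant: applying the sunflower lemma to these $\psi$'s and pigeonholing so the sunflower kernel is exactly $\{\hat{u}\}$ produces $q$ monomorphisms of $T'$ that agree only on $\hat{u}$ and are pairwise disjoint elsewhere; using $\deg_G(y)>D$ we pick $q$ distinct neighbors of $y$ that avoid $\bigcup_i\psi_i(V(T'))$ to play the role of $\hat{v}$, forming a copy of $T_{\{\hat{u}\}}^q$ in $G$. Since $\hat{v}$ becomes isolated in $T-\{\hat{u}\}$ and $\deg_T(\hat{u})\ge k$, the graph $T-\{\hat{u}\}$ has $\deg_T(\hat{u})\ge k$ components, contradicting $\c{F}_{T,k}^q$-freeness. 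Hence $M(y)=O_{T,q}(1)$ for high-degree $y$, and the high-degree contribution is $\le O(1)\cdot\sum_{y}\deg_G(y)=O(e(G))=O(\ex(n,\c{F}))$, absorbed into the target bound since $k\ge 2$.

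The main obstacle is that not every tree $T$ admits a leaf whose neighbor has degree $\ge k$; spiders with long legs and large $k$ are the standard counterexample. For such trees the inductive reduction must remove an entire pendant subpath rather than a single leaf, and I expect this is where the Helly-type theorem for trees hinted at in the paper's abstract enters, organizing the many monomorphisms of $T$ in $G$ around a common core subtree so that the sunflower-plus-extension step still forces enough branching in $T-R$ to invoke the forbidden $T_R^q$. A careful tracking of the parameters in the low-degree lifting claim (to handle the case when the shared image of $\hat{u}$ is itself low-degree) will likewise be required for the induction to close cleanly.
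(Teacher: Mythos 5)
Your outline is genuinely different from the paper's proof, but it has more than just the one gap you flagged; let me explain. The paper does not induct on $v(T)$. Instead it first proves a standalone bound (\Cref{theorem:treeBoundEG}): any $\c{F}_{T,k}^q$-free graph $G$ contains at most $O(e(G)^{k-1})$ copies of $T$, and this directly implies \Cref{theorem:treeReduction}. The heart of that bound is the edge-Helly theorem for leaf-cuttable subtrees (\Cref{thm:Helly}): for every monomorphism $\phi$ of $T$ into $G$ one collects the ``highly-extendable'' subtrees $\c{T}_\phi$, proves this family is $k$-Helly (using the $\c{F}_{T,k}^q$-free hypothesis via a sunflower step), applies the Helly theorem to find at most $k-1$ edges of $T$ that hit every member of $\c{T}_\phi$, and then shows the corresponding $k-1$ edges of $G$ determine $\phi$ up to $O(1)$ choices. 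No leaf is ever deleted and no induction on $T$ occurs.

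You correctly identify one fatal obstruction: there may be no leaf of $T$ whose neighbor has degree $\ge k$ (spiders with long legs), and the paper's Helly theorem is exactly what replaces the ``cut one leaf'' reduction in that situation. But even in the favorable case where such a leaf $\hat{v}$ exists, your high-degree estimate $M(y)=O(1)$ is not established. The sunflower lemma gives you $q$ monomorphisms of $T'$ agreeing on some kernel $R'\ni\hat{u}$ and disjoint elsewhere, but you have no control over $R'$: it can strictly contain $\hat{u}$, and then $T'-R'$ (hence $T-R'$) may have fewer than $k$ components, so the resulting $T_{R'}^q$ need not lie in $\c{F}_{T,k}^q$, and no contradiction arises. (Take $T'$ a star centered at $\hat{u}$ and a kernel containing the center plus most of the leaves: $T'-R'$ is connected.) You cannot simply shrink $R'$ back to $\{\hat{u}\}$, because the petals need not be disjoint off the smaller set. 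The paper sidesteps this precisely because it only invokes the sunflower lemma to certify a copy of $T_R^q$ when the subtrees of $\c{T}_\phi$ that are being combined are already leaf-cuttable and edge-disjoint, at which point \Cref{cl:XFacts} and \Cref{claim amalgamation}\ref{claim amalgamation (iii'') T-R has k components} guarantee $T-R$ has $\ge k$ components.

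The low-degree reduction is a second independent gap. You need ``$\c{F}$-free $\Rightarrow$ $\c{F}_{T',k-1}^{q'}$-free'' to invoke the inductive hypothesis on $T'$, but you only know ``$\c{F}$-free $\Rightarrow$ $\c{F}_{T,k}^q$-free.'' The proposed lifting of a $(T')_{R'}^{q'}$ to a $T_{R'}^q$ requires attaching a pendant to the image of $\hat{u}$, which fails when $\hat{u}\notin R'$ (each petal has its own $\hat{u}$-image, possibly all of low degree), and it is not clear that the ``ambient $\c{F}_{T,k}^q$-free constraint'' rescues this — indeed one can contain a $(T')_{R'}^{q'}$ with all relevant vertices of low degree without forcing any $T_R^q\in \c{F}_{T,k}^q$. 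So the implication you would need to close the induction is not established and seems false as stated. The base cases ($T=K_2$, and $k=1$ via sunflower) are fine, but the inductive step is where the argument breaks; the Helly-for-trees machinery is not an optional add-on for pathological trees but the load-bearing idea throughout.
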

	That is, for trees we can reduce upper bounding the \textit{generalized} Tur\'an number $\ex(n,T,\c{F})$ to upper bounding the \textit{classical} Tur\'an number $\ex(n,\c{F})$, which is often significantly easier to work with.
	
	There are some cases where the bounds of \Cref{theorem:treeReduction} can be sharp, such as when\footnote{In this case avoiding $\c{F}_{H,k}^\pow$ is essentially equivalent to avoiding $K_{2,t}$, so \Cref{theorem:treeReduction} gives $\ex(n,P_5,\c{F}_{T,k}^\pow)=O((n^{3/2})^2)=O(n^3)$, which is best possible by considering any nearly-regular $C_4$-free graph on $\Theta(n^{3/2})$ edges.} $H=P_5$ and $k=3$.  However, it is worth noting that in general \Cref{theorem:treeReduction} can be far from sharp since, in particular, it can never give a bound stronger than that of $O(n^{k-1})$ even when the true behavior of $\ex(n,H,\c{F})$ may be as small as $O(n)$; such as when $H=K_{1,t-1}$, $k=t-1$, and $\c{F}=\c{F}_{H,k}^\pow$, for example.
	
	\subsection{Proof of Main Results}\label{subsection non-realizable main}
	In this subsection we prove all of our main results except for \Cref{theorem:treeReduction} which we defer until \Cref{subsection tree reduction} due to its rather technical proof.  We begin by proving our key observation \Cref{proposition:keyObservation}, which to a large extent boils down to showing the following basic fact.
	
	\begin{lemma}\label{lemma powers many copies}
		If $H$ is a graph and $R\subsetneq V(H)$ is such that $H-R$ contains $k$ connected components, then the graph $H_R^n$ contains at least $n^k$ copies of $H$.
	\end{lemma}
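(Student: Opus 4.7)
The plan is to construct $n^k$ distinct copies of $H$ in $H_R^n$ explicitly, one for each element of $[n]^k$, by independently choosing which of the $n$ copies to ``live in'' for each connected component of $H - R$.

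Let $C_1, \dots, C_k$ denote the connected components of $H - R$, and for each $i \in [n]$ let $(V(H) \setminus R)_i$ denote the vertex set of the $i$-th copy of $H$ in $H_R^n$ (so that $V(H_R^n) = R \sqcup \bigsqcup_{i=1}^n (V(H) \setminus R)_i$). For a vertex $v \in V(H) \setminus R$, write $v_i$ for the corresponding vertex in the $i$-th copy. For each tuple $(i_1, \dots, i_k) \in [n]^k$, define a map $\phi_{i_1,\dots,i_k} : V(H) \to V(H_R^n)$ by $\phi(r) = r$ for $r \in R$ and $\phi(v) = v_{i_j}$ whenever $v \in V(C_j)$.

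The main verification is that each $\phi_{i_1,\dots,i_k}$ is a monomorphism. Injectivity is immediate: distinct vertices of $H$ either live in different parts of $R \sqcup V(C_1) \sqcup \cdots \sqcup V(C_k)$ (and are sent to disjoint vertex classes) or live in the same $C_j$ (and are sent to distinct vertices of the $i_j$-th copy). For the homomorphism property, one classifies an edge $uv \in E(H)$ by where its endpoints lie. The key observation, which is exactly the content of $C_1, \dots, C_k$ being the components of $H - R$, is that no edge of $H$ joins $C_j$ to $C_{j'}$ for $j \ne j'$; hence every edge of $H$ either lies entirely in $R$, joins $R$ to some single $C_j$, or lies entirely inside some single $C_j$. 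In each case $\phi$ sends it into an edge of the $i_j$-th copy of $H$ inside $H_R^n$ (or just into $H_R^n[R]$ in the first case), which exists by construction.

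Finally, distinct tuples produce distinct copies of $H$: the image of $\phi_{i_1,\dots,i_k}$ is $R \cup \bigcup_{j=1}^k V((C_j)_{i_j})$, and since each $C_j$ is nonempty and the sets $(V(H) \setminus R)_1, \dots, (V(H) \setminus R)_n$ are pairwise disjoint, the $j$-th coordinate $i_j$ is recoverable from the image. Thus the $n^k$ tuples yield $n^k$ distinct subgraphs of $H_R^n$ isomorphic to $H$, giving the bound. There is no real obstacle here beyond carefully tracking the case analysis on edges of $H$, which is forced by the component decomposition of $H - R$.
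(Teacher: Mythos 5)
Your proof is correct and takes essentially the same approach as the paper: both fix the embedding on $R$ and then independently choose which of the $n$ copies each connected component of $H-R$ maps into, giving $n^k$ choices. You spell out the edge-case verification that the paper compresses into ``it is not difficult to see,'' but the underlying argument is identical.
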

	\begin{proof}
		Recall that $H_R^n$ is defined to be the union of $n$ copies $H_1,\ldots,H_n$ of $H$ which all agree on $R$ and which otherwise are disjoint from each other.  Consider the following way of embedding a copy of $H$ into $H_R^n$.  We first embed $R\sub V(H)$ into the common intersection of the $H_i$.  Then, to embed each connected component of $H-R$, we choose some integer $i$ and embed this component into the corresponding connected component of $H_i - R$.  
		
		It is not difficult to see that this always defines an embedding of $H$ with distinct embeddings corresponding to distinct copies of $H$.  Moreover, for this embedding we have a total of $n$ choices for each of the $k$ connected components of $H-R$, showing that the number of copies of $H$ is at least $n^{k}$ as desired.
	\end{proof}

	\begin{proof}[Proof of \Cref{proposition:keyObservation}]
		Let $H$, $k \geq 1$, and $\c{F}$ be given. For $R \subseteq V(H)$, let $w(R)$ denote the number of connected components of $H - R$. 
		We consider two cases.
		
		\textbf{Case 1:} for every $R \subsetneq V(H)$ with $w(R) \geq k$, there exists some integer $\pow(R)$ such that the family $\c{F}$ contains a graph which is a subgraph of $H_R^{\pow(R)}$.  Letting $\pow:=\max_R \pow(R)$, this means that any $\c{F}$-free graph is also $\c{F}_{H,k}^\pow$-free as desired.
		
		\textbf{Case 2:} there exists $R \subsetneq V(H)$ with $w(R) \geq k$ such that $\c{F}$ does not contain any subgraph of $H_R^\pow$ for any $\pow$. Let $n' = \lfloor{n/|V(H)|}\rfloor$ and consider $G = H_R^{n'}$.  This graph has at most $n$ vertices, is $\c{F}$-free by hypothesis, and it contains at least $\Om_{H,k}(n^k)$ copies of $H$ by  \Cref{lemma powers many copies}, proving $\ex(n,H,\c{F}) = \Om(n^k)$. 
		
	\end{proof}
	
	We next prove \Cref{proposition no 01 exponents}, which we recall says that every rational in $(0,1)$ is non-realizable for every $H$ with $0,1$ being realizable under certain hypothesis.  For this, we establish two basic lemmas that will further help us in proving \Cref{theorem:treeReduction}.
	
	\begin{lemma}\label{lemma:canonicalReduction}
		If $G$ and $H$ are graphs and if $\Phi\subseteq \mathrm{Mon}(H,G)$ is a collection of monomorphisms, then there exists a partition of $V(G)$ into sets $\{V_{\hat{x}}:\hat{x}\in V(H)\}$ and a subcollection $\Phi'\sub \Phi$ with $|\Phi'|\ge v(H)^{-v(H)}|\Phi|$ such that $\phi(\hat{x})\in V_{\hat{x}}$ for all $\hat{x}\in V(H)$ and $\phi\in \Phi'$.
	\end{lemma}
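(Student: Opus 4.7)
The plan is to use a simple random partition / probabilistic argument. Assign to each vertex $v \in V(G)$ independently and uniformly at random a label $\ell(v) \in V(H)$, and define $V_{\hat{x}} := \{v \in V(G) : \ell(v) = \hat{x}\}$. This gives a random partition (where some parts may be empty, which is allowed) of $V(G)$ indexed by $V(H)$.

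For each fixed $\phi \in \Phi$, the event that $\phi$ respects the partition, meaning $\phi(\hat{x}) \in V_{\hat{x}}$ for every $\hat{x} \in V(H)$, is equivalent to the event $\ell(\phi(\hat{x})) = \hat{x}$ for every $\hat{x} \in V(H)$. Because $\phi$ is injective, the $v(H)$ vertices $\phi(\hat{x})$ are distinct, so their labels are mutually independent. Each individual label is correct with probability $1/v(H)$, and hence the probability that $\phi$ respects the random partition is exactly $v(H)^{-v(H)}$.

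Summing over $\phi \in \Phi$ and using linearity of expectation, the expected number of $\phi \in \Phi$ that respect the random partition is $|\Phi| \cdot v(H)^{-v(H)}$. There must therefore exist a specific outcome of the random labeling, giving a concrete partition $\{V_{\hat{x}} : \hat{x} \in V(H)\}$, with at least this many partition-respecting monomorphisms. Taking $\Phi'$ to be the set of such $\phi$ gives the required subcollection. There is no real obstacle here: the argument is entirely standard, the only mild subtlety being to note that the injectivity of each $\phi$ is what makes the $v(H)$ label events independent, yielding the exact $v(H)^{-v(H)}$ factor.
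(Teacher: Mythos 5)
Your proposal is correct and follows essentially the same random partitioning plus linearity of expectation argument as the paper; you spell out the role of injectivity a bit more explicitly, but the approach is identical.
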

	
	\begin{proof}
		Construct the sets $V_{\hat{x}}$ randomly by assigning each $u\in V(G)$ to each possible set uniformly and independently.  Letting $\Phi'\sub \Phi$ be the (random) collection of $\phi\in \Phi$ satisfying $\phi(\hat{x})\in V_{\hat{x}}$ for all $\hat{x}\in V(H)$, we see that $\Pr[\phi\in \Phi']=v(H)^{-v(H)}$ and hence $\E[|\Phi'|]=v(H)^{-v(H)}|\Phi|$ by linearity of expectation.  In particular, there exists some choice of $V_{\hat{x}}$ sets such that $|\Phi'|\ge v(H)^{-v(H)}|\Phi|$, giving the result.
	\end{proof}

	\begin{lemma}\label{lemma:sunflower}
		For every graph $H$ and integer $\pow \ge 2$, there exists some integer $\Pow=\Pow(H,q)$ such that $\Pow\ge \pow$ and such that if $G$ is a graph and if $\Phi\subseteq \mathrm{Mon}(H,G)$ satisfies $|\Phi|\ge \Pow$, then there exists a set $R\subsetneq V(H)$ and distinct maps $\phi_1,\ldots,\phi_\pow\in \Phi$ such that for all $i\ne j$, we have $\phi_i(\hat{x})=\phi_j(\hat{y})$ if and only if $\hat{x}=\hat{y}$ and $\hat{x}\in R$.
	\end{lemma}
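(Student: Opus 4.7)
The plan is to combine \Cref{lemma:canonicalReduction} with the classical Erd\H{o}s--Rado sunflower lemma. First I would apply \Cref{lemma:canonicalReduction} to pass from $\Phi$ to a subcollection $\Phi' \subseteq \Phi$ of size at least $v(H)^{-v(H)} |\Phi|$ together with a partition $\{V_{\hat{x}} : \hat{x} \in V(H)\}$ of $V(G)$ into pairwise disjoint parts such that every $\phi \in \Phi'$ sends $\hat{x}$ into $V_{\hat{x}}$. This canonicalization is the key trick: it ensures that whenever two maps $\phi_i, \phi_j \in \Phi'$ satisfy $\phi_i(\hat{x}) = \phi_j(\hat{y})$, the value must lie in $V_{\hat{x}} \cap V_{\hat{y}}$, forcing $\hat{x} = \hat{y}$.

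Next I would encode each $\phi \in \Phi'$ as its graph $S_\phi := \{(\hat{x}, \phi(\hat{x})) : \hat{x} \in V(H)\} \subseteq V(H) \times V(G)$, a $v(H)$-element set, with distinct $\phi$ giving distinct $S_\phi$. Applying the Erd\H{o}s--Rado sunflower lemma to the family $\{S_\phi : \phi \in \Phi'\}$, as long as $|\Phi'| > v(H)!\cdot(q-1)^{v(H)}$, we can extract $q$ distinct maps $\phi_1, \ldots, \phi_q$ whose associated sets form a sunflower with some core $K$, meaning $S_{\phi_i} \cap S_{\phi_j} = K$ for all $i \ne j$. Accordingly, the right choice of threshold is $Q := \max\bigl(q,\ v(H)^{v(H)}(v(H)!\cdot (q-1)^{v(H)} + 1)\bigr)$.

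I would then define $R := \{\hat{x} \in V(H) : (\hat{x},y) \in K \text{ for some } y\}$ and show this $R$ witnesses the claim. If $R = V(H)$, then $K$ would be the graph of a map on all of $V(H)$ contained in each $S_{\phi_i}$, forcing $\phi_1 = \cdots = \phi_q$ and contradicting their distinctness, so $R \subsetneq V(H)$ as required. For the characterization: if $\phi_i(\hat{x}) = \phi_j(\hat{y})$ with $i \ne j$, the canonicalization forces $\hat{x} = \hat{y}$, and then $(\hat{x}, \phi_i(\hat{x})) \in S_{\phi_i} \cap S_{\phi_j} = K$ gives $\hat{x} \in R$. Conversely, if $\hat{x} = \hat{y} \in R$ then the (unique) pair $(\hat{x}, y) \in K$ lies in every $S_{\phi_i}$, forcing $\phi_i(\hat{x}) = \phi_j(\hat{x})$.

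I do not anticipate a serious obstacle here: the canonicalization lemma makes the coordinate structure compatible with a set-theoretic sunflower, after which Erd\H{o}s--Rado provides everything in one shot. The only mild care needed is verifying $R \ne V(H)$ and that the disjointness of the $V_{\hat{x}}$ correctly handles the ``$\hat{x} \ne \hat{y}$'' case of the biconditional, both of which fall out of the construction.
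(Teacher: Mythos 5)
Your proof is correct, but it takes a genuinely different route from the paper's. The paper does not invoke \Cref{lemma:canonicalReduction} here; instead it first prunes $\Phi$ to a subcollection $\Phi'$ on which all \emph{images} $\phi(V(H))\subseteq V(G)$ are distinct (losing a factor $v(H)!$), applies the Erd\H{o}s--Rado sunflower lemma to these image sets to get a core $K\subseteq V(G)$, and then runs a second pigeonhole argument on the tuples $(\phi_i^{-1}(v_1),\ldots,\phi_i^{-1}(v_t))$ to select $q$ maps that all restrict to the same injection $R\to K$, finally setting $R:=\phi_1^{-1}(K)$. Your version instead canonicalizes via \Cref{lemma:canonicalReduction} (losing $v(H)^{v(H)}$) and applies the sunflower lemma to the \emph{graphs} $S_\phi\subseteq V(H)\times V(G)$; the disjointness of the parts $V_{\hat{x}}$ then makes the ``$\hat{x}=\hat{y}$'' direction of the biconditional automatic, so no second pigeonhole is needed, and $R$ falls out as the projection of the core. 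Both arguments are sound. Yours is arguably more streamlined and gives a much smaller threshold $Q$ (the paper applies the sunflower lemma with $k=q\cdot v(H)!$ petals, which appears in a factorial, whereas you only need $q$ petals); the paper's is self-contained in the sense of not depending on the canonicalization lemma. One small point worth making explicit if you wrote this up: $K\subseteq S_{\phi_1}$ is automatically the graph of a \emph{partial} function since $S_{\phi_1}$ is, which is what lets you conclude $|K|=v(H)\Rightarrow K=S_{\phi_1}$ in the $R\subsetneq V(H)$ argument; you gesture at this but don't say why $K$ is single-valued.
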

	In other words, the maps $\phi_1,\ldots,\phi_q$ are such that their image is a copy of $H_R^\pow$ in $G$.
	\begin{proof}
		This result is a consequence of the Erd\H{o}s-Rado sunflower lemma~\cite{ER1960}, which states\footnote{We note in passing that it is a major open problem to improve upon the bounds of the Erd\H{o}s-Rado sunflower lemma, though we will only need that some finite bound exists.  For a more thorough treatment of this result and best known bounds we refer the reader to~\cite{ALWZ2021}.} that if $\c{H}$ is an $r$-uniform hypergraph with $|\c{H}|\ge k!(r-1)^k$, then there exist edges $e_1,\ldots,e_k\in \c{H}$ and a set $K\sub V(\c{H})$ such that $e_i\cap e_j=K$ for every $i\ne j$. We will ultimately apply this result with $r:=v(H)$ and $k:=q\cdot v(H)!$, with us in the end proving the lemma for the value $\Pow(H,G):=v(H)!\cdot k!(r-1)^k$.
		
		Given a collection of at least $\Pow(H,G)$ monomorphisms $\Phi\subseteq \mathrm{Mon}(H,G)$, we first take a subcollection $\Phi'\subseteq \Phi$ such that $\phi(V(H))\neq \psi(V(H))$ for all $\phi,\psi\in \Phi'$. Since at most $v(H)!$ monomorphisms can have the same image, we can find such a collection with $|\Phi'|\geq |\Phi|/v(H)!\geq k!(r-1)^k$.
		
		Consider the $r$-uniform hypergraph $\mathcal{H}$ with $V(\mathcal{H})=V(G)$ and
		\[
		E(\mathcal{H})=\{\phi(V(H))\mid \phi\in \Phi'\}.
		\]
		Note that $|\mathcal{H}|=|\Phi'|$ by the defining property of $\Phi'$. By the sunflower lemma, there are $k$ edges of $\mathcal{H}$, say $\phi_1(V(H)),\dots,\phi_k(V(H))$, and a set $K\subseteq V(G)$ such that $\phi_i(V(H))\cap \phi_j(V(H))=K$ for all $i\neq j$. Let $K=\{v_1,v_2,\dots,v_t\}$, and associate to each $\phi_i$ the tuple $(\phi_i^{-1}(v_1),\phi_i^{-1}(v_2),\dots,\phi_i^{-1}(v_t))$. Since the $\phi_i$'s are injective and $t\leq v(H)$, we have that there are at most $v(H)!$ possible tuples, so by pigeonhole there must exist a collection of at least $k/v(H)!=q$ of the $\phi_i$'s which correspond to the same tuple.  Without loss of generality we can assume that $\phi_1,\dots,\phi_q$ have this property. Let $R:=\phi_1^{-1}(K)$, noting that by definition we have for $1\leq i,j\leq q$, $i\neq j$  that $\phi_i(\hat{x})=\phi_j(\hat{y})$ if and only if $\hat{x}=\hat{y}$ and $\hat{x}\in R$.  This implies that $R$ and the $\phi_1,\ldots,\phi_q$ satisfy all the properties of the lemma except possibly the property that $R\subsetneq V(H)$, but this follows from the fact that the $\phi_i$ maps are distinct and $q\ge 2$.

	\end{proof}
	
	Putting these two results together gives the following.
	
	\begin{lemma}\label{proposition FH1 Bounded} 
		For any graph $H$, we have $\ex(n,H,\c{F}_{H,1}^q)=O_q(1)$.
	\end{lemma}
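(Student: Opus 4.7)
The plan is to combine Lemma~\ref{lemma:sunflower} directly with the definition of $\c{F}_{H,1}^q$. Observe that the family $\c{F}_{H,1}^q$ contains $H_R^q$ for every proper subset $R \subsetneq V(H)$, since the condition ``$H-R$ has at least one component'' is automatic whenever $R \neq V(H)$. So a $\c{F}_{H,1}^q$-free graph $G$ is one that contains no copy of $H_R^q$ for any $R \subsetneq V(H)$.

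First I would let $G$ be any $n$-vertex $\c{F}_{H,1}^q$-free graph, and let $\Phi := \mathrm{Mon}(H,G)$. I claim that $|\Phi| < \Pow(H,q)$, where $\Pow(H,q)$ is the constant supplied by Lemma~\ref{lemma:sunflower}. Indeed, suppose for contradiction that $|\Phi| \geq \Pow(H,q)$. Then Lemma~\ref{lemma:sunflower} produces some $R \subsetneq V(H)$ and distinct $\phi_1,\ldots,\phi_q \in \Phi$ such that $\phi_i(\hat{x}) = \phi_j(\hat{y})$ if and only if $\hat{x} = \hat{y} \in R$ for all $i \neq j$. By the remark immediately after the statement of Lemma~\ref{lemma:sunflower}, the union $\bigcup_i \phi_i(V(H))$ is a copy of $H_R^q$ in $G$; since $R \subsetneq V(H)$, this $H_R^q$ belongs to $\c{F}_{H,1}^q$, contradicting that $G$ is $\c{F}_{H,1}^q$-free.

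Therefore $|\mathrm{Mon}(H,G)| < \Pow(H,q)$, and by Observation~\ref{observation monomorphisms same as copies} the number of copies of $H$ in $G$ is at most $\Pow(H,q)/\mathrm{aut}(H) = O_q(1)$, which is independent of $n$. Taking the maximum over all $\c{F}_{H,1}^q$-free $n$-vertex graphs yields $\ex(n,H,\c{F}_{H,1}^q) = O_q(1)$, as desired.

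There is essentially no obstacle here: the entire argument is a one-line reduction to Lemma~\ref{lemma:sunflower}, with the only observation being that the ``sunflower'' of $q$ monomorphisms produced by that lemma literally is a copy of some $H_R^q \in \c{F}_{H,1}^q$. The main intellectual content was already placed in Lemma~\ref{lemma:sunflower} itself, whose proof relied on the Erd\H{o}s–Rado sunflower bound.
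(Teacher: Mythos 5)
Your proof is correct and follows essentially the same route as the paper: both apply Lemma~\ref{lemma:sunflower} to $\mathrm{Mon}(H,G)$, observe that the resulting sunflower is a copy of some $H_R^q \in \c{F}_{H,1}^q$ (using that $H-R$ automatically has at least one component when $R \subsetneq V(H)$), and then pass from monomorphisms to copies via Observation~\ref{observation monomorphisms same as copies}. No gaps, no meaningful difference in approach.
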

	\begin{proof}
		Letting $\Pow=O_{\pow}(1)$ be as in \Cref{lemma:sunflower}, we see that if $G$ is a graph such that $\mathrm{mon}(H,G)\geq \Pow$, then $G$ contains a copy of $H_R^\pow$ for some $R\subsetneq V(H)$. Since $H-R$ has at least 1 connected component, we have $H_R^\pow\in \c{F}_{H,1}^\pow$. Thus, by Observation~\ref{observation monomorphisms same as copies}, no $\c{F}_{H,1}^\pow$-free graph can have more than $\Pow/\mathrm{aut}(H)$ copies of $H$, implying that $\ex(n,H,\c{F}_{H,k}^\pow)=O_q(1)$ as desired.
	\end{proof}
	
	We now give our proof of \Cref{proposition no 01 exponents}.
	\begin{proof}[Proof of \Cref{proposition no 01 exponents}]
		The fact that no rational in $(0,1)$ is realizable for any graph $H$ follows from \Cref{proposition FH1 Bounded} and \Cref{corollary:keyObservation}.  It remains to show that $0$ and $1$ are realizable for appropriate graphs $H$.  For both of these results, let $\c{F}_H$ be the (finite) family consisting of every graph which can be obtained by adding a new vertex $v$ to $H$ such that $v$ is made adjacent to exactly one vertex of $H$.  
		
		Observe that in any $\c{F}_H$-free graph, no connected component containing a copy of $H$ can have more than $v(H)$ vertices.  If $H$ is connected then this implies $\ex(n,H,\c{F}_H)=O(n)$ since each copy of $H$ in an $\c{F}_H$-free graph must lie within one of the at most $n$ connected components of size $O(1)$.  On the other hand, we have $\ex(n,H,\c{F}_H)=\Om(n)$ by considering the disjoint union of cliques of size $v(H)$, proving that 1 is realizable in this case.
		
		To show that $0$ is realizable for any graph $H$ without isolated vertices, we consider the family $\c{F}=\c{F}_H\cup \{H\cup K_2\}$.  By construction, any $\c{F}$-free graph $G$ containing a copy of $H$ on some set of vertices $S$ must have all of its edges lie between two vertices of $S$, implying that every copy of $H$ must be contained in this set $S$ since $H$ has no isolated vertices.  Since $|S|=v(H)$ this implies $\ex(n,H,\c{F})=O(1)$.  On the other hand, for $n\ge v(H)$ the graph $G$ consisting of one copy of $H$ and $n-v(H)$ isolated vertices is $\c{F}$-free, showing $\ex(n,H,\c{F})=\Om(1)$.
	\end{proof}

	It remains to prove our results for trees assuming the validity of \Cref{theorem:treeReduction}.  We begin by deriving our weak version of this result.
	\begin{proof}[Proof of \Cref{theorem weak tree reduction}]
		Recall that we aim to prove that if $T\ne K_1$ is a tree and if $\c{F}$ is a family of graphs, then for every integer $k\ge 1$ either $\ex(n,T,\c{F})=\Om_{T,k}(n^k)$ or $\ex(n,T,\c{F})=O_{T,k,\c{F}}(\ex(n,\c{F})^{k-1})$.  And indeed, by \Cref{proposition:keyObservation}, if $\ex(n,T,\c{F})=\Om_{T,k}(n^k)$ does not hold then there exists some $q\ge 1$ such that every $\c{F}$-free family is also $\c{F}_{T,k}^q$-free.  It follows from \Cref{theorem:treeReduction} that  $\ex(n,T,\c{F})=O_{T,k,q}(\ex(n,\c{F})^{k-1})$, proving the result.
	\end{proof}
	
	For our next result, we make the following basic observation.
	\begin{lemma}\label{lemma alpha empty}
		If $H$ is a graph and $k>\al(H)$, then $\c{F}_{H,k}^q=\emptyset$ for all $q$.
	\end{lemma}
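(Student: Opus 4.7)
The plan is to show that for any proper subset $R\subsetneq V(H)$, the graph $H-R$ has at most $\alpha(H)$ connected components. Combined with $k>\alpha(H)$, this forces every $R\subsetneq V(H)$ to fail the defining condition ``$H-R$ has at least $k$ connected components,'' so the family $\c{F}_{H,k}^q$ contains no graphs $H_R^q$ at all.

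The key step is the observation that the component structure of $H-R$ yields an independent set in $H$. Specifically, suppose $H-R$ has $c$ connected components $C_1,\ldots,C_c$. I would pick one vertex $v_i\in V(C_i)$ from each component and claim that $\{v_1,\ldots,v_c\}$ is an independent set in $H$. Indeed, for any $i\ne j$, the vertices $v_i,v_j$ both lie in $V(H)\setminus R$, so any edge of $H$ between them would also be an edge of $H-R$; but since $v_i,v_j$ lie in distinct components of $H-R$, no such edge exists. Hence $c\le \alpha(H)$.

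Given this, the lemma follows immediately: for every $R\subsetneq V(H)$, the number of components of $H-R$ is at most $\alpha(H)<k$, so no such $R$ satisfies the condition in the definition of $\c{F}_{H,k}^q$, and $\c{F}_{H,k}^q=\emptyset$ for every $q$.

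There is no real obstacle to overcome here; the argument is essentially a one-line observation about picking representatives from components. The only thing to be slightly careful about is the edge case $R=V(H)$, but this is excluded by the requirement $R\subsetneq V(H)$ in \Cref{definition flower powers}, so $H-R$ has at least one vertex and at least one component, and the argument above applies verbatim.
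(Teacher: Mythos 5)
Your argument is correct and is essentially identical to the paper's proof: both observe that choosing one vertex from each connected component of $H-R$ yields an independent set in $H$, so $H-R$ can have at most $\alpha(H)<k$ components. The only cosmetic difference is that you argue directly while the paper phrases it as a contradiction.
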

	\begin{proof}
		Assume for contradiction that this set is non-empty.  This implies that there exists a set $R\sub V(H)$ such that $H-R$ has at least $k$ connected components.  Taking one vertex from each of these components defines an independent set in $H$ of size at least $k>\al(H)$, a contradiction to the definition of $\al(H)$.
	\end{proof}
	
	We next derive our first corollary of \Cref{theorem weak tree reduction}. 
	\begin{proof}[Proof of \Cref{corollary trees vs forests}]
		Recall that we aim to prove that if $T\ne K_1$ is a tree and if $\c{F}$ is a family of graphs which contains a forest, then either $\ex(n,T,\c{F})=\Theta(n^k)$ for some integer $k\le \al(T)$ or $\ex(n,T,\c{F})=0$ for all sufficiently large $n$.
		
		First, observe that if $\c{F}$ contains any graph which is a subgraph of $T$ together with some number of isolated vertices, then $\ex(n,T,\c{F})=0$ for all $n$ sufficiently large.  We can thus assume that this is not the case, and hence $\ex(n,T,\c{F})=\Om(1)$ by considering the $n$-vertex graph $G$ which consists of a single copy of $T$ together with $n-v(T)$ isolated vertices.
		
		Let $k\ge 0$ denote the smallest integer such that $\ex(n,T,\c{F})=O(n^k)$.  We claim that $\ex(n,T,\c{F})=\Theta(n^k)$.  Indeed, if $\ex(n,T,\c{F})=\Om(n^k)$ then we are done, so we can assume this is not the case.  This together with our assumption $\ex(n,T,\c{F})=\Om(1)$ and \Cref{proposition:keyObservation} implies both that $k\ge 1$ and that every $\c{F}$-free graph is also $\c{F}_{T,k}^q$ for some $q\ge 1$. Thus by  \Cref{theorem:treeReduction} we find that
		\[\ex(n,T,\c{F})=O(\ex(n,\c{F})^{k-1})=O(n^{k-1}),\]
		where this last equality used our hypothesis that $\c{F}$ contains a forest together with the standard fact that Tur\'an numbers of forests are at most $O(n)$.  This inequality contradicts the minimality of our choice of $k$, giving the claim.
		
		It only remains now to show that $k\le \alpha(T)$.  For this, because $\c{F}_{H,\alpha(H)+1}^q=\emptyset$ by \Cref{lemma alpha empty}, we have that every $\c{F}$-free graph is trivially $\c{F}_{H,\alpha(H)+1}^q$-free for any $q$.  It then follows from \Cref{theorem:treeReduction} that $\ex(n,T,\c{F})=O(\ex(n,\c{F})^{\alpha(T)})=O(n^{\alpha(T)})$, completing the proof.
		
	\end{proof}
	
	Lastly, we prove our stability-type result \Cref{theorem:leafGap}.

	\begin{proof}[Proof of \Cref{theorem:leafGap}]
		Recall that we aim to prove that if $T\ne K_2$ is a tree with $\ell\ge 2$ leaves and $\ex(n,T,\c{F})=O(n^{\ell})$, then $\ex(n,T,\c{F})=\Theta(n^k)$ for some integer $k$ or is identically 0 for large $n$.  If $\c{F}$ contains a forest then the result follows from \Cref{corollary trees vs forests}, so we can assume every element of $\c{F}$ contains a cycle.  
		
		Let $n':=\lfloor n/|V(T)|\rfloor$ and consider the graph $G=T_R^{n'}$ with $R$ the set of non-leaves of $T$ (i.e.\ $G$ is the graph obtained by duplicating each leaf of $T$ a total of $n'$ times).  It is not difficult to see that $G$ is a tree and hence is $\c{F}$-free, and by \Cref{lemma powers many copies} it contains at least $\Omega(n^\ell)$ copies of $T$ since every leaf of $T$ is a connected component of $T-R$ (here we implicitly use that $T\ne K_2$ to ensure that the unique neighbor of each leaf is not a leaf and hence in $R$).  This combined with the assumption $\ex(n,T,\c{F})=O(n^{\ell})$ implies $\ex(n,T,\c{F})=\Theta(n^\ell)$, proving the result.
	\end{proof}
	
	\subsection{Proof of \texorpdfstring{\Cref{theorem:treeReduction}}{Theorem 5.3}}\label{subsection tree reduction}
	Our main goal for this section will be to prove the following technical result.
	
	\begin{theorem}\label{theorem:treeBoundEG}
		If $T\ne K_1$ is a tree and if $k,\pow\ge 2$ are integers, then any $n$-vertex graph $G$ which is $\c{F}_{T,k}^\pow$-free contains at most $O(e(G)^{k-1})$ copies of $T$. 
	\end{theorem}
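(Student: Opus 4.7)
The plan is to prove the theorem by establishing two lemmas and combining them. The first is an \emph{extension lemma}: for any $R \subsetneq V(T)$ such that $T-R$ has at least $k$ connected components, and any partial monomorphism $\phi \colon R \to V(G)$, the number of extensions of $\phi$ to a full monomorphism of $T$ into $G$ is bounded by a constant $C = C(T,k,q)$. To establish this, apply \Cref{lemma:sunflower} to the set of extensions (viewed as monomorphisms of $T-R$ into $V(G)\setminus \phi(R)$): if there were at least $Q(T-R,q)$ extensions, the sunflower lemma would yield some $R' \subsetneq V(T)\setminus R$ together with $q$ extensions agreeing exactly on $R \cup R'$, forming a copy of $T_{R\cup R'}^q$ in $G$. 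Since removing more vertices from a tree can only increase the component count, $T-(R\cup R')$ still has at least $k$ components, so this copy lies in $\mathcal{F}_{T,k}^q$, contradicting the hypothesis.

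The second lemma is a purely combinatorial fact about trees: whenever some $R \subsetneq V(T)$ yields $T-R$ with $\geq k$ components, one can find such an $R$ with $|R|\leq k-1$. Starting from any valid $R$, iteratively remove any $x\in R$ for which $T-(R-x)$ still has $\geq k$ components. At a minimal $R$, every $x\in R$ must be adjacent (in $T$) to at least $w-k+2$ components of $T-R$, where $w\geq k$ is the current component count, for otherwise $x$ could be removed. Counting the edges of $T$ between $R$ and $V(T)\setminus R$ through the ``contracted tree'' (obtained by shrinking each component of $T-R$ to a single vertex) yields $\sum_{x\in R}\mathrm{adj}_x = |R|+w-1-e(T[R])\leq |R|+w-1$. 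Combining this with the lower bound $\sum \mathrm{adj}_x \geq |R|(w-k+2)$ gives $|R|(w-k+1)\leq w-1$, which for every $w\geq k$ and $k\geq 2$ implies $|R|\leq k-1$.

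Putting these together: if $\mathcal{F}_{T,k}^q \neq \emptyset$, choose $R$ of size at most $k-1$ with $T-R$ having $\geq k$ components, and the extension lemma bounds the total count as $\mathrm{mon}(T,G) \leq n^{|R|}\cdot C = O(m^{k-1})$, after WLOG assuming $G$ has no isolated vertices so that $n\leq 2m$. When instead $\mathcal{F}_{T,k}^q = \emptyset$, the $\mathcal{F}_{T,k}^q$-free hypothesis is vacuous; observing that the maximum number of components of $T-R$ over $R\subsetneq V(T)$ equals $\alpha(T)$ (taking one vertex per component gives an independent set, and $R = V(T)\setminus I$ for a maximum independent set $I$ achieves this), this case occurs precisely when $k>\alpha(T)$, and the desired bound $\mathrm{mon}(T,G) \leq O(m^{\alpha(T)}) \leq O(m^{k-1})$ for arbitrary $G$ is the classical tree-embedding bound, provable by a direct induction on $v(T)$ removing a suitably chosen leaf and invoking \Cref{observation monomorphisms same as copies} and Cauchy--Schwarz.

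The main obstacle is verifying the combinatorial fact: the edge-count on the contracted tree has to be set up precisely, and the algebraic inequality $|R|(w-k+1)\leq w-1$ must be verified to imply $|R|\leq k-1$ for every $w\geq k$ and $k\geq 2$, with the tightest case being $w=k$. A secondary technical point is that the iterative removal procedure changes the $\mathrm{adj}$ values of the remaining vertices of $R$, so one must be careful that the minimality condition is applied to the current tree structure rather than the original one.
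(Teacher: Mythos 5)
Your approach is genuinely different from the paper's, but the first lemma (the ``extension lemma'') is false. Take $T=P_5$ on $\hat{x}_1\hat{x}_2\hat{x}_3\hat{x}_4\hat{x}_5$, $k=3$, and $R=\{\hat{x}_2,\hat{x}_4\}$ (the unique $R\subsetneq V(T)$ for which $T-R$ has $\geq 3$ components). Let $G$ consist of a vertex $c$ adjacent to $b$ and $d$, plus $m$ pendant neighbors attached to $b$ and $m$ pendant neighbors attached to $d$, and no other edges. No two vertices of $G$ have two common neighbors, so $G$ is $K_{2,2}$-free and hence $\c{F}_{P_5,3}^q$-free for every $q\geq 2$. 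Yet the partial monomorphism sending $\hat{x}_2\mapsto b$, $\hat{x}_4\mapsto d$ has exactly $m^2$ extensions to a monomorphism of $P_5$ ($\hat{x}_3$ is forced to $c$, while $\hat{x}_1$, $\hat{x}_5$ can each be any of the $m$ pendants at $b$, $d$ respectively), which is unbounded. The error is the claim that ``removing more vertices from a tree can only increase the component count'': this fails for vertices of degree at most one in the remaining forest. In the example above, the sunflower lemma may return the kernel $R'=\{\hat{x}_1,\hat{x}_5\}$ (say, if one fixed pendant at $b$ and one at $d$ recur across the sunflower), and then $T-(R\cup R')=\{\hat{x}_3\}$ has a single component, so $T_{R\cup R'}^q$ lies in $\c{F}_{T,1}^q$ but not in $\c{F}_{T,3}^q$, and no contradiction with $\c{F}_{T,k}^q$-freeness is obtained.

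The deeper problem is that your scheme commits to a single vertex set $R$ up front and then asks for a uniform $O(1)$ bound on the number of extensions of every partial map, whereas the ``high-multiplicity directions'' in which a copy of $T$ can be extended depend on the particular monomorphism $\phi$ and the structure of $G$. This is precisely what the paper's proof is designed to handle: via the edge-Helly theorem for subtrees, it selects, for each monomorphism $\phi$ separately, a set of at most $k-1$ \emph{edges} of $G$ that hit every ``highly extendable'' subtree, and then shows that each such edge set is used by only $O(1)$ monomorphisms. Your second lemma (that an $R$ with $T-R$ having $\ge k$ components may always be taken with $|R|\le k-1$) is a correct and pleasant observation, and the contracted-tree edge count is sound, but it cannot rescue the argument: in the $P_5$ example the only valid $R$ already has size $k-1$ and the extension count there is $\Theta(m^2)$, so the decomposition $\mathrm{mon}(T,G)\le n^{|R|}\cdot C$ with constant $C$ simply does not hold.
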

	
	Our motivation for this is that it quickly implies \Cref{theorem:treeReduction}.
	\begin{proof}[Proof of \Cref{theorem:treeReduction}]
		The case $k=1$ follows from \Cref{proposition FH1 Bounded}. For $k,\pow\ge 2$; by taking $G$ to be an $\c{F}$-free graph with $\ex(n,T,\c{F})$ copies of $T$, we find by \Cref{theorem:treeBoundEG} that
		\[\ex(n,T,\c{F})=O(e(G)^{k-1})=O(\ex(n,\c{F})^{k-1}),\]
		with this last step holding because $G$ (being an $\c{F}$-free graph) is $\c{F}_{T,k}^q$-free by hypothesis. The only remaining case then is $\pow=1$, which follows from the case $\pow=2$ and monotonicity.
	\end{proof}

	\textbf{Proof Intuition for \Cref{theorem:treeBoundEG}}.  The statement of \Cref{theorem:treeBoundEG} suggests how it might be proven; namely by taking each copy of $T$ in a $\c{F}_{T,k}^\pow$-free graph $G$ and having each copy ``correspond'' to a set of $k-1$ edges $E$ of $G$ in such a way that each set of edges is corresponded to by at most $O(1)$ copies of $T$.  
	
	For example, consider $P_5$ the path graph on $x_1x_2x_3x_4x_5$ and $G:=(P_5)_{\{x_3,x_4\}}^n$ (i.e.\ $G$ is obtained by taking $n$ copies of $P_5$ which all agree on the edge $x_3x_4$ and are otherwise disjoint).   In this example, each copy $P$ of $P_5$ in $G$ can be uniquely identified by specifying the edges of $G$ which play the role of $x_1x_2$ and $x_4x_5$ in $P$, implying that $G$ contains at most $e(G)^2$ copies of $P_5$.  The same argument works if we alternatively specified which edges play the role of $x_2x_3$ and $x_4x_5$ in $P$.
	
	In the proof above it is  critical that we specify the edge of $G$ playing the role of $x_4x_5$, as even if we specify every other edge of a copy of $P_5$, there would still exist $n$ ways to extend these specified edges into distinct copies of $P_5$.  More generally, the edge set $E$ that we wish to choose for each copy must be capable of ``detecting'' every way in which its copy has ``many options'' for being extended.
	
	Motivated by the above, for each monomorphism $\phi:V(T)\to V(G)$ of a tree $T$, we wish to define a set of subtrees $\c{T}_\phi$ of $T$ which have ``many extensions'' with respect to $\phi$, in the sense that for each $T'\in \c{T}_\phi$, there exist many $\phi'$ which agree with $\phi$ outside of $T'$ (i.e.\ there exist many ways of changing how $\phi$ maps $T'$ while maintaining that $\phi$ is a monomorphism).  For example, if $G=(P_5)^n_{\{x_3,x_4\}}$ then for every choice of $\phi$ we will define $\c{T}_\phi$ to consist of the subtrees which either contain $x_1x_2x_3$ or $x_4x_5$, i.e.
	\[
	\c{T}_{\phi}=\{x_1x_2x_3,x_1x_2x_3x_4,x_1x_2x_3x_4x_5,x_2x_3x_4x_5,x_3x_4x_5,x_4x_5\}.
	\]
	More generally, if $G=T_R^n$, then $\c{T}_\phi$ will always consist of the subtrees that contain some connected component of $T-R$ together with the vertices of $R$ adjacent to this component.
	
	The key insight about working with these sets $\c{T}_\phi$ is that whatever edge set $E$ we ultimately choose to identify each copy of $\phi$ with, this set $E$ must have the property that every $T'\in \c{T}_\phi$ contains an edge which ``corresponds'' to an edge in $E$.  Indeed, if this were not the case then $E$ would necessarily be contained in ``many'' copies of $T$, namely those which agree with $\phi$ outside of the subtree $T'\in \c{T}_\phi$ which does not contain an edge corresponding to an edge in $E$.
	
	Given all this, we are led to the following sub-question: what conditions can one impose on a set of subtrees $\c{T}$ to guarantee the existence of a set of $k-1$ edges $E$ that contains an edge from each element of $\c{T}$?  In particular, when $k=2$, this question asks when we can guarantee that a collection of subtrees $\c{T}$ contains an edge which lies in every subtree of $\c{T}$.  Such a statement is reminiscent of the classical Helly property for trees, which states that if $\c{T}$ is a collection of subtrees of a given tree $T$ whose \textit{vertex} sets pairwise intersect, then there exists some \textit{vertex} which is contained in every element of $\c{T}$.   For our setting, we will need to prove an analogous Helly-type statement with respect to the \textit{edge} sets of subtrees rather than vertex sets, and proving such a result result will be the starting point of our argument.
	
	\textbf{Formal Details}.  We begin with some technical definitions.
	
	\begin{definition}
		Given a tree $T$, we say that a subgraph $T'\sub T$ is a \textbf{subtree} of $T$ if $T'$ itself is a tree.  We say that a subtree $T'$ is \textbf{leaf-cuttable} if $e(T')\ge 1$ and if every edge in $E(T)\setminus E(T')$ which is incident to a vertex of $T'$ is incident to a leaf of $T'$.  We say that a collection of subtrees $\c{T}$ of a tree $T$ is \textbf{$k$-Helly} if for any $T_1,\ldots,T_k\in \c{T}$, there exists some $i\ne j$ such that $E(T_i)\cap E(T_j)\ne \emptyset$.  Here we adopt the convention that $\c{T}$ is $1$-Helly if and only if $\c{T}=\emptyset$.
	\end{definition}
	
	The name ``leaf-cuttable'' is meant to convey the fact that if $L'$ is the set of leaves of $T'$, then $T-L'$ contains $T'-L'$ as a connected component (that is, $T'$ can be ``separated'' from the rest of $T$ by cutting off its leaves).  With these definitions we can prove our Helly-type result for subtrees with respect to edges.
	\begin{theorem}\label{thm:Helly}
		Let $k\geq 1$ be an integer. If $T$ is a tree and if $\c{T}$ is a $k$-Helly collection of leaf-cuttable subtrees, then there exist a set $E$ of at most $k-1$ edges of $T$ such that every $T_i\in \c{T}$ contains at least one edge of $E$.
	\end{theorem}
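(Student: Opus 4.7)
I plan to prove \Cref{thm:Helly} by strong induction on the pair $(|V(T)|, k)$ in the lexicographic order, with trivial base cases at $k=1$ (where the convention forces $\c{T}=\emptyset$) and at $|V(T)|\le 2$ (where the only candidate subtree is $T$ itself, so the claim is immediate).

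For the inductive step with $|V(T)|\ge 3$ and $k\ge 2$, I fix a leaf $\ell$ of $T$ together with its unique incident edge $e=\ell u$, and set $\c{T}_\ell = \{T'\in\c{T}:\ell\in V(T')\}$, which equals $\{T'\in\c{T}:e\in E(T')\}$ since $e$ is the only $T$-edge at $\ell$. The analysis then breaks into three cases. If every $T'\in \c{T}$ contains $e$, then $E=\{e\}$ is a cover of size $1\le k-1$. If instead some $T^{**}\in\c{T}$ equals the single edge $\{e\}$, I would argue that $\c{T}\setminus\c{T}_\ell$ is $(k-1)$-Helly: any $k-1$ pairwise edge-disjoint subtrees in $\c{T}\setminus\c{T}_\ell$ would, together with $T^{**}$, constitute $k$ pairwise edge-disjoint subtrees of $\c{T}$, contradicting $k$-Helly. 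Applying the inductive hypothesis at smaller $k-1$ yields a cover $E'$ of $\c{T}\setminus\c{T}_\ell$ of size $\le k-2$, and $E'\cup\{e\}$ then covers $\c{T}$ with at most $k-1$ edges.

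In the remaining case, some $T'\in\c{T}$ avoids $e$ and no $T'\in\c{T}$ equals $\{e\}$. Here I would reduce to the smaller tree $T-\ell$ by replacing each $T'\in\c{T}_\ell$ with $T'-\ell$, obtaining a family $\c{T}'$ of leaf-cuttable subtrees of $T-\ell$ (a routine check using that the leaves of $T'-\ell$ other than $\ell$ still absorb all the edges of $T\setminus T'$ incident to $V(T')\setminus\{\ell\}$), each still containing at least one edge since $T'\ne\{e\}$. Any cover of $\c{T}'$ in $T-\ell$ is automatically a cover of $\c{T}$ in $T$, so by the inductive hypothesis (at smaller $|V(T)|$) it suffices to verify that $\c{T}'$ is still $k$-Helly.

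I expect this $k$-Helly preservation to be the main obstacle. The only obstruction would be two subtrees $T_1,T_2\in\c{T}_\ell$ whose only common edge in $\c{T}$ is $e$, so that $T_1-\ell$ and $T_2-\ell$ share no edge. I would rule this out using leaf-cuttability: since $T_1,T_2\ne\{e\}$, each $T_i-\ell$ has at least one edge, so $u$ has degree at least $1$ in $T_i-\ell$ and thus degree at least $2$ in $T_i$, making $u$ an internal vertex of both $T_1$ and $T_2$. Leaf-cuttability then forces every $T$-edge incident to $u$ to lie in both $T_1$ and $T_2$, and since $|V(T)|\ge 3$ the vertex $u$ has some $T$-neighbor other than $\ell$. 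This produces an edge $e'\ne e$ in $E(T_1)\cap E(T_2)$, which survives in $\c{T}'$, so $T_1-\ell$ and $T_2-\ell$ still share $e'$, preserving the required $k$-Helly property and closing the induction.
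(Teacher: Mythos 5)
Your proof is correct and follows essentially the same route as the paper's: the paper phrases it as a minimal counterexample over $(k, v(T))$ rather than explicit lexicographic induction, removes a leaf $x$ with neighbor $y$, and splits into the cases $xy\in\c{T}$ (your Case B, where $\c{T}\setminus\c{T}_\ell$ is shown to be $(k-1)$-Helly and $e$ is added to the resulting cover) and $xy\notin\c{T}$ (your Case C, where one descends to $T-x$ and checks $k$-Helly is preserved by finding the edge $yz$ with $z\ne x$ via leaf-cuttability). Your extra Case A is a harmless shortcut, and your degree argument for why $u$ is internal in $T_1,T_2$ is a minor rephrasing of the paper's observation that $y$ being a leaf of $T_i'$ would force $T_i'=xy\notin\c{T}$; the substance is the same.
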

	We note that this result is entirely false if we do not impose that the subtrees are leaf-cuttable.  Indeed, if $T=K_{1,2t-1}$ and if $\c{T}$ is the set of copies of $K_{1,t}$ in $T$, then one can check that $\c{T}$ has the 2-Helly property, but any set of edges $E$ which intersects every subtreee of $\c{T}$ must have size at least $t$ (which is in stark contrast to \Cref{thm:Helly} which ensures only 1 edge is required if the subtrees were leaf-cuttable).
	
	\begin{proof}
		Assume to the contrary that there exists a counterexample. Out of all such counterexamples let $k$, $T$ and $\mathcal{T}$ be chosen to minimize $k$, and then subject to this, minimize $v(T)$. We may assume that $k\ge 2$ and $v(T)\geq 3$ as the result is trivial if $k=1$ or if $v(T)\leq 2$. Let $x$ be an arbitrary leaf of $T$ and $y$ its unique neighbor. 
		
		First consider the case that the single edge tree $xy$ lies in $\c{T}$ and define a set of subtrees of $T$ by $\c{T}':=\{T'\in \c{T}:x\notin V(T')\}$.  We observe that $\c{T}'$ has the $(k-1)$-Helly property, since if there existed trees $T_1,\ldots,T_{k-1}\in \c{T}'$ which were all pairwise disjoint, then the $k$ trees $T_1,\ldots,T_{k-1},xy\in \c{T}$ would also be pairwise disjoint by definition of $\c{T}$, a contradiction to $\c{T}$ having the $k$-Helly property.  Because $\c{T}'$ has the $(k-1)$-Helly property, we have by our choice of minimal counterexample that there exist a set of $k-2$ edges of $T$ such that every tree of $\c{T}'$ contains at least one of these edges.  Adding $xy$ to this set of $k-2$ edges gives a set of $k-1$ edges such that every tree of $\c{T}$ contains at least one of these edges, proving the result in this case.
		
		Now consider the case that $xy\notin \c{T}$ and define $\c{T}':=\{T'-x:T'\in \c{T}\}$ which we view now as a collection of subtrees of $T-x$.  Note that by hypothesis of $xy\notin \c{T}$ that each of the trees in $\c{T}'$ has at least one edge. Furthermore, we claim for each $T'\in \c{T}$ that $T'-x$ is leaf-cuttable in $T-x$. Indeed, any edge $e$ in $E(T-x)\setminus E(T'-x)$ must also be in $E(T)\setminus E(T')$, and since $T'$ is leaf-cuttable, $e$ must be incident to a leaf of $T'$. This leaf cannot be $x$, so $e$ is also incident to a leaf of $T'-x$.
		
		We claim that $\c{T}'$ has the $k$-Helly property.  Indeed, assume for contradiction that there existed trees $T_1'-x,\ldots,T_k'-x\in \c{T}'$ which were pairwise edge disjoint with $T'_i\in \c{T}$ for all $i$.  Because $\c{T}$ had the $k$-Helly property, it must be that some $T'_i,T'_j$ share an edge, and because $T'_i-x$ and $T'_j-x$ are assumed to be edge disjoint, the edge they share must be $xy$. Furthermore, since $xy\not\in \mathcal{T}$, $y$ is not a leaf in either $T'_i$ or $T'_j$.  Let $z$ be a neighbor of $y$ in $T-x$. Since $y$ is not a leaf in $T'_i$, $z$ must be in $V(T'_i)$, as otherwise $yz$ would be in $E(T)\setminus E(T'_i)$ but not adjacent to a leaf of $T'_i$, contradicting the fact that $T'_i$ is leaf-cuttable. Similarly we must have $z\in V(T'_j)$. Thus, $yz\in E(T'_i-x)\cap E(T'_j-x)$, so $\mathcal{T}'$ has the $k$-Helly property.
		
		Since $T$ is a smallest counterexample with respect to $k$ and since $\mathcal{T}$ is a $k$-Helly collection of leaf-cuttable subtrees of $T-x$, there exists a set $E$ of at most $k-1$ edges of $T-x$ such that each $T'-x\in \mathcal{T}'$ contains an edge of $E$. This same set $E$ intersects every tree in $\mathcal{T}$, contradicting the assumption that $k$, $T$ and $\mathcal{T}$ give a counterexample.
	\end{proof}
	
	We now wish to define the set of subtrees $\c{T}$ for which we will apply \Cref{thm:Helly} to, and this will require a number of technical definitions motivated in part by the partition of $V(G)$ guaranteed by \Cref{lemma:canonicalReduction}.  
	
	\begin{definition}
		Let $T$ be a tree, $G$ a graph, and $\{V_{\hat{x}}:\hat{x}\in V(T)\}$ a partition of $V(G)$ indexed by $V(T)$.
		\begin{itemize}
			\item For each $u\in V(G)$, we define $\hat{u}$ to be the unique vertex in $T$ such that $u\in V_{\hat{u}}$, and for $S\sub V(G)$ we define $\widehat{S}=\{\hat{u}:u\in S\}$.  To avoid potential confusion with notation, we will by default denote generic vertices of $G$ by $u,v,w$ and generic vertices of $T$ by $\hat{x},\hat{y},\hat{z}$.  
			\item We say that a map $\psi:X\to V(G)$ with $X\sub V(T)$ is \textbf{canonical} if $\psi(\hat{x})\in V_{\hat{x}}$ for all $\hat{x}\in X$.  Note that canonical maps are necessarily injective since the $V_{\hat{x}}$ sets are pairwise disjoint. 
			\item If $S\sub V(G)$ is a set and if $T'\sub T$ is a subtree, then we define $\Psi(T';S)$ to be the set of canonical monomorphisms $\psi:V(T')\to V(G)$ which have $S$ in the image of $\psi$.
		\end{itemize}
	\end{definition}
	Here we think of $\Psi(T';S)$ as encoding the ways that one can ``extend'' the set $S$ into a copy of $T'$. With this we can formally define the set of subtrees that we want.
	
	\begin{definition}
		Let $T$ be a tree, $G$ a graph, $\{V_{\hat{x}}:\hat{x}\in V(T)\}$ a partition of $V(G)$ indexed by $V(T)$, and $k,\pow\ge 2$ a pair of integers.  We say that a subtree $T'\sub T$ is a \textbf{highly-extendable} with respect to some set $S\sub V(G)$ if the following conditions hold:
		\begin{enumerate}[label=(\alph*)]
			\item Every vertex of $\widehat{S}\sub V(T)$ is a leaf of $T'$,\label{condition highly extendable definition (a) leaf of T'}
			\item Every edge of $E(T)\sm E(T')$ which is incident to a vertex of $T'$ is incident to a vertex of $\widehat{S}$, \label{condition highly extendable defintion (b) edges incident}
			\item We have $|\Psi(T';S)|\geq \Pow(T',q)$ where $\Pow(T',q)$ is the real number guaranteed by \Cref{lemma:sunflower}.\label{condition highly extendable definition (c) many canonical morphisms}
		\end{enumerate}
		For each canonical monomorphism $\phi:V(T)\to V(G)$, we define $\c{T}_\phi$ to be the set of all subtrees $T'$ which are highly-extendable with respect to some non-empty subset of the image of $\phi$.
	\end{definition}
	
	The key technical result regarding these subtrees is the following.
	
	\begin{proposition}\label{proposition:HellyTechnical}
		Let $T\ne K_1$ be a tree, $G$ a graph, $\{V_{\hat{x}}:\hat{x}\in V(T)\}$ a partition of $V(G)$ indexed by $V(T)$, and $k,\pow \ge 2$ a pair of integers.  If $G$ is $\c{F}_{T,k}^\pow$-free and if $\phi:V(T)\to V(G)$ is a canonical monomorphism, then every $T'\in \c{T}_\phi$ is leaf-cuttable and $\c{T}_\phi$ has the $k$-Helly property.
	\end{proposition}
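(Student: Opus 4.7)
The plan is to handle the two assertions separately. The leaf-cuttable claim is essentially immediate: if $T'$ is highly-extendable with respect to some non-empty $S$, then condition (b) says every edge of $E(T)\setminus E(T')$ incident to $T'$ is incident to $\widehat{S}$, and condition (a) says $\widehat{S}$ consists only of leaves of $T'$, so together these give the required property on edges in the definition of leaf-cuttable. The requirement $e(T')\geq 1$ follows from (c), since if $T'$ were a single vertex then $|\Psi(T';S)|\leq 1$, whereas (c) demands $|\Psi(T';S)|\geq \Pow(T',q)\geq q\geq 2$.

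For the $k$-Helly claim, the strategy will be to assume for contradiction that $T_1,\ldots,T_k\in \mathcal{T}_\phi$ are pairwise edge-disjoint, each highly-extendable with respect to some $S_i$, and then to construct a copy of a graph from $\mathcal{F}_{T,k}^q$ inside $G$, contradicting the hypothesis. Applying \Cref{lemma:sunflower} to $\Psi(T_i;S_i)$, whose size is at least $\Pow(T_i,q)$ by (c), will yield $R_i\subsetneq V(T_i)$ and canonical monomorphisms $\phi_{i,1},\ldots,\phi_{i,q}\in \Psi(T_i;S_i)$ whose images form a copy of $(T_i)_{R_i}^q$. Because each $\phi_{i,a}$ is canonical with $S_i$ in its image, all of these maps send each $\hat{y}\in \widehat{S_i}$ to the unique element of $S_i$ in $V_{\hat{y}}$, namely $\phi(\hat{y})$; hence $\widehat{S_i}\subseteq R_i$ and $\phi_{i,a}$ agrees with $\phi$ on $\widehat{S_i}$.

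The core structural input will be an \emph{edge-trapping} observation: for each $i$, any edge of $T$ incident to $V(T_i)\setminus \widehat{S_i}$ in fact lies in $E(T_i)$. Indeed, if $\hat{x}\in V(T_i)\setminus \widehat{S_i}$ and $\hat{x}\hat{y}\in E(T)\setminus E(T_i)$, then (b) forces $\hat{y}\in \widehat{S_i}\subseteq V(T_i)$, but then the $T_i$-path from $\hat{x}$ to $\hat{y}$ together with this edge would create a cycle in the tree $T$, contradiction. Writing $L_i:=V(T_i)\setminus \widehat{S_i}$, the trapping observation combined with edge-disjointness of the $T_i$'s immediately yields pairwise disjointness of the $L_i$'s (a common vertex would have all its $T$-edges simultaneously in both $T_i$ and $T_j$), and it also forbids any edge of $T$ between distinct $L_i$ and $L_j$ (such an edge would lie in both $E(T_i)$ and $E(T_j)$).

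The final step is to set $R':=V(T)\setminus \bigcup_i (V(T_i)\setminus R_i)$ and, for each $a\in [q]$, to define $\Phi_a:V(T)\to V(G)$ by $\Phi_a(\hat{x})=\phi_{i,a}(\hat{x})$ if $\hat{x}\in L_i$ and $\Phi_a(\hat{x})=\phi(\hat{x})$ otherwise; this is well-defined by the disjointness of the $L_i$'s. Each $\Phi_a$ should then be verified to be a canonical monomorphism of $T$: edges inside some $L_i$ lie in $T_i$ by trapping and are mapped correctly by $\phi_{i,a}$; edges with one endpoint in $L_i$ and the other outside all $L_j$'s likewise lie in $T_i$ by trapping, with the outside endpoint forced into $\widehat{S_i}$ where $\phi_{i,a}$ and $\phi$ coincide; and edges disjoint from every $L_i$ are handled by $\phi$. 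On $R'$ all the $\Phi_a$'s coincide (since the sunflower agrees on $R_i$), while on each $V(T_i)\setminus R_i$ the sunflower disjointness guarantees the $\Phi_a$-images across $a$ are pairwise disjoint. Hence $\Phi_1,\ldots,\Phi_q$ realize a copy of $T_{R'}^q$ in $G$, and since each $V(T_i)\setminus R_i$ is non-empty and the trapping observation forbids edges of $T$ between distinct such sets, $T-R'$ has at least $k$ components, so $T_{R'}^q\in \mathcal{F}_{T,k}^q$, giving the contradiction. The main obstacle in executing this plan will be verifying that the piecewise-defined $\Phi_a$ is a homomorphism, which hinges on the trapping observation together with the alignment of the sunflower with $\phi$ on $\widehat{S_i}$.
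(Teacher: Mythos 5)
Your proof is correct and follows essentially the same route as the paper: verify leaf-cuttability directly from conditions (a)--(c), then, assuming $k$ pairwise edge-disjoint $T_i\in\c{T}_\phi$, apply the sunflower lemma to each $\Psi(T_i;S_i)$ and glue the resulting maps with $\phi$ to produce a copy of $T_{R'}^q\in\c{F}_{T,k}^q$. The one genuine streamlining you make is structural: the paper chooses each $S_j$ \emph{minimal} and uses that minimality to prove that the sets $X_j$ (vertices all of whose $T$-edges lie in $T_j$) are disjoint from $\widehat{S}_j$, whereas you avoid minimality altogether by working directly with $L_i := V(T_i)\setminus\widehat{S}_i$, extracting the inclusion $\widehat{S}_i\subseteq R_i$ for free from the sunflower (all maps in $\Psi(T_i;S_i)$ agree on $\widehat{S}_i$) and establishing the needed ``edge-trapping'' property of $L_i$ from (a), (b), and acyclicity of $T$. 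Both routes yield the same set $R' = V(T)\setminus\bigcup_i(V(T_i)\setminus R_i)$ and the same piecewise-defined monomorphisms $\Phi_a$; your version just trims the minimality bookkeeping.
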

	We postpone the proof of this result and show how it can be used to prove our main result for this section.
	
	\begin{proof}[Proof of \Cref{theorem:treeBoundEG} assuming \Cref{proposition:HellyTechnical}]
		Let $G$ be an $n$-vertex $\c{F}_{T,k}^\pow$-free graph. By \Cref{lemma:canonicalReduction} there exists a partition of $V(G)$ into sets $\{V_{\hat{x}}:\hat{x}\in V(T)\}$ and a subset $\Phi\subseteq \mathrm{Mon}(T,G)$ with $|\Phi|=\Theta(\mathrm{mon}(T,G))$ such that $\phi(\hat{x})\in V_{\hat{x}}$ for all $\phi\in \Phi$ and $\hat{x}\in V(T)$, i.e.\ such that each $\phi\in \Phi$ is canonical.  Our ultimate goal will be to show that $\mathrm{mon}(T,G)=O(e(G)^{k-1})$, which by \Cref{observation monomorphisms same as copies} implies that $G$ contains at most this many copies of $T$ as desired. Since $|\Phi|=\Theta(\mathrm{mon}(T,G))$, it will suffice to prove $|\Phi|=O(e(G)^{k-1})$.  
		
		By Proposition~\ref{proposition:HellyTechnical} and \Cref{thm:Helly}, for every $\phi\in \Phi$, there exists a set of at most $k-1$ edges $\widehat{E}_\phi\sub E(T)$ such that every $T'\in \c{T}_\phi$ contains an edge of $\widehat{E}_\phi$, and without loss of generality we may assume that $\widehat{E}_\phi$ is non-empty (since if $\widehat{E}_\phi$ were empty we could add an arbitrary edge of $T\ne K_1$ to it while keeping its size $1\le k-1$).  Define 
		\[
		E_\phi:=\{\phi(\hat{x})\phi(\hat{y}):\hat{x}\hat{y}\in \widehat{E}_\phi\},
		\] 
		noting that $E_\phi$ is a non-empty set of at most $k-1$ edges of $E(G)$ since $\phi$ is a homomorphism, and that for every $T'\in \c{T}_\phi$ there exists an edge $uv\in E_\phi$ with $\hat{u}\hat{v}\in E(T')$. 
		
		We aim to show that for each non-empty set of at most $k-1$ edges $E\sub E(G)$, the number of $\phi\in \Phi$ with $E_{\phi}=E$ is $O(1)$, from which it will follow that $|\Phi|=O(e(G)^{k-1})$ since there are at most $e(G)^{k-1}$ choices for $E$.
		
		From now on we fix some $E$ as above.	Let $V_E\sub V(G)$ denote the set of vertices which are contained in at least one edge of $E$, let $T'_1,\ldots,T'_c$ denote the connected components of $T-\widehat{V}_E$, let $S_j\sub V_E$ denote the set of vertices $u\in V_E$ such that $\hat{u}$ is adjacent to a vertex of $T'_j$, and let $T''_j$ denote the subtree consisting of $T'_j$ and $\widehat{S}_j$.   We begin by observing some basic facts.
		
		\begin{claim}\label{cl:ETFacts}
			The following holds for each $j\ge 1$:
			\begin{enumerate}[label=(\roman*)]
				\item The set $S_j$ is non-empty,\label{claim ETFacts (i) non-empty S_j}
				\item Each $\hat{x}\in \widehat{S}_j$ is a leaf in $T''_j$\label{claim ETFacts (ii) each element of hat S_j is a leaf}
				\item No edge $uv\in E$ has $\hat{u}\hat{v}\in E(T''_j)$,\label{claim ETFacts (iii) no edge is in E(T''_j)}
				\item If $|\Psi(T''_j;S_j)|\ge \Pow$, then $T''_j$ is a highly-extendable subtree with respect to $S_j$.\label{claim ETFacts (iv) highly extentable}
			\end{enumerate}
		\end{claim}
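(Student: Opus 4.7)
The plan is to establish the four parts of \Cref{cl:ETFacts} by unpacking the definitions and repeatedly exploiting the fact that $T$ is a tree, so that any putative cycle in an induced subgraph of $T$ is an immediate contradiction.

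For (i), since $E$ is non-empty we have $\widehat{V}_E\ne\emptyset$, and since $T$ is connected while $T'_j$ is a proper connected component of $T-\widehat{V}_E$, some edge of $T$ must join $V(T'_j)$ to $\widehat{V}_E$. The endpoint in $\widehat{V}_E$ equals $\hat{u}$ for some $u\in V_E$, and by the definition of $S_j$ this $u$ lies in $S_j$.

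For (ii), the key observation is that $T''_j$ is the induced subgraph of $T$ on $V(T'_j)\cup \widehat{S}_j$, and is in fact a tree: each $\hat{x}\in \widehat{S}_j$ has at most one neighbor in $V(T'_j)$ (two such neighbors would create a cycle through the connected $T'_j$), and no two vertices of $\widehat{S}_j$ are adjacent in $T$ (two adjacent such vertices, each with its own neighbor in $T'_j$, would again close a cycle through $T'_j$). The same case analysis shows every $\hat{x}\in \widehat{S}_j$ has exactly one neighbor in $T''_j$, namely a vertex of $T'_j$, so $\hat{x}$ is a leaf of $T''_j$.

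For (iii), suppose $uv\in E$ and $\hat{u}\hat{v}\in E(T''_j)$. Then $\hat{u},\hat{v}\in \widehat{V}_E$, and since $V(T'_j)\cap \widehat{V}_E=\emptyset$, both must lie in $\widehat{S}_j$. But (ii) then forces both to be leaves of $T''_j$, and two adjacent leaves in a tree force the tree to have exactly two vertices, contradicting the fact that $V(T'_j)\ne\emptyset$ together with $|\widehat{S}_j|\ge 1$ from (i) already yields $|V(T''_j)|\ge 3$.

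Finally, for (iv), condition~(a) of highly-extendable is exactly (ii), and condition~(c) is the hypothesis on $|\Psi(T''_j;S_j)|$. To verify condition~(b), I would take any edge $\hat{a}\hat{b}\in E(T)\setminus E(T''_j)$ with, say, $\hat{a}\in V(T''_j)$ and $\hat{b}\notin V(T''_j)$, and rule out $\hat{a}\in V(T'_j)$: if $\hat{b}\in \widehat{V}_E$ then $\hat{b}\in \widehat{S}_j\subseteq V(T''_j)$, a contradiction; if instead $\hat{b}\notin \widehat{V}_E$ then $\hat{b}$ belongs to some other component $T'_i$ of $T-\widehat{V}_E$, and then $\hat{a}\hat{b}$ joins two distinct components of $T-\widehat{V}_E$, also a contradiction. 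Hence $\hat{a}\in \widehat{S}_j$, as required. The whole claim is essentially careful bookkeeping under the partition $V(T)=\widehat{V}_E\sqcup \bigsqcup_i V(T'_i)$, and no substantive obstacle is anticipated.
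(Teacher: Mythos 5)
Your proof is correct and takes essentially the same approach as the paper's, going definition by definition and using the tree structure of $T$ to rule out cycles and extra adjacencies. The only cosmetic difference is in part (iii), where the paper notes directly that $\hat{u}\in\widehat{S}_j$ would fail to be a leaf (being adjacent both to $\hat{v}$ and to a vertex of $T'_j$), whereas you reach the same contradiction by observing that two adjacent leaves would force $T''_j$ to have exactly two vertices.
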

		\begin{proof}
			For~\ref{claim ETFacts (i) non-empty S_j}, since $E$ is non-empty we have that $V_E$ is non-empty.  This means that each connected component of $T-\widehat{V}_E$ is a proper subgraph of $T$, and in particular the connected component $T'_j$ must have some vertex in $\widehat{V}_E$ which is adjacent to it in $T$, showing that $S_j$ is non-empty.

			For~\ref{claim ETFacts (ii) each element of hat S_j is a leaf}, we have by definition that each $\hat{x}\in \widehat{S}_j$ is adjacent to at least one vertex $\hat{y}$ of $T'_j\sub T''_j$.  If $\hat{x}$ were also adjacent to some $\hat{z}\ne \hat{y}$ in $T'_j$, then since $T'_j$ is connected there would exist a cycle in $T''_j\sub T$ formed by taking the path from $\hat{y}$ to $\hat{z}$ in $T'_j$ together with the edges from $\hat{x}$ to $\hat{y},\hat{z}$, a contradiction to $T$ being a tree.  Similarly if there existed some $\hat{x}'\in \widehat{S}_j$ which $\hat{x}$ was adjacent to in $T''_j$, then since $\hat{x}'\in \widehat{S}_j$ there exists some neighbor $\hat{y}'$ of $\hat{x}'$ in $T'_j$, and now the (possibly empty) path from $\hat{y}'$ to $\hat{y}$ in $T'_j$ together with the vertices $\hat{x},\hat{x}'$ would create a cycle in $T'_j$, giving a contradiction.

			For~\ref{claim ETFacts (iii) no edge is in E(T''_j)}, if $uv\in E$, then $\hat{u},\hat{v}\not\in V(T'_j)\sub V(T)\sm \widehat{V}_E$ by definition.  As such, having $\hat{u}\hat{v}\in E(T_j'')$ would imply that $\hat{u}\in \widehat{S}_j$ is not a leaf (since $\hat{u}$ is adjacent to some vertex in $T'_j$ by definition of $S_j$ in addition to the vertex $\hat{v}$), a contradiction to what we proved in~\ref{claim ETFacts (ii) each element of hat S_j is a leaf}.
			
			For~\ref{claim ETFacts (iv) highly extentable}, condition~\ref{condition highly extendable definition (c) many canonical morphisms} of being highly-extendable holds by hypothesis, and condition~\ref{condition highly extendable definition (a) leaf of T'} holds by~\ref{claim ETFacts (ii) each element of hat S_j is a leaf}. For~\ref{condition highly extendable defintion (b) edges incident}, let $\hat{u}\hat{v}\in E(T)\setminus E(T_j'')$ be an edge incident to $T_j''$, say with $\hat{u}\in V(T_j'')$, $\hat{v}\not\in V(T_j'')$. Assume to the contrary that $\hat{u}\not\in \widehat{S}$, so $\hat{u}\in V(T_j')$. If $\hat{v}\in \widehat{V}_E$, then $\hat{v}\in \widehat{S}\subseteq V(T_j'')$ by definition, a contradiction. If on the other hand $\hat{v}\not\in \widehat{V}_E$, then actually $\hat{u}$ and $\hat{v}$ must be in the same component of $T-\widehat{V}_E$, namely $T'_j$, a contradiction to $\hat{u}\hat{v}\notin E(T''_j)$.
		\end{proof}
		We next observe that the situation in \Cref{cl:ETFacts}\ref{claim ETFacts (iv) highly extentable} never occurs in the cases we care about.

		\begin{claim}\label{cl:ETSmall}
			If there exists some $\phi\in \Phi$ such that $E=E_\phi$, then $|\Psi(T''_j;S_j)|<\Pow(T''_j,q)$ for all $j$.
		\end{claim}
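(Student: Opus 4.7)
The plan is to prove the contrapositive, assuming for contradiction that $|\Psi(T''_j;S_j)|\ge \Pow(T''_j,q)$ for some index $j$, and then deriving a violation of Claim~\ref{cl:ETFacts}\ref{claim ETFacts (iii) no edge is in E(T''_j)}. The key observation that makes this work is that the hypothesis $E=E_\phi$ lets us translate the highly-extendable property of $T''_j$ directly into membership in $\mathcal{T}_\phi$, which must then interact with $\widehat{E}_\phi$ in the way demanded by Theorem~\ref{thm:Helly}.

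First I will verify that $S_j$ is contained in the image of $\phi$. This is straightforward: $S_j\subseteq V_E$ by definition, and every vertex of $V_E$ is an endpoint of some edge $uv\in E=E_\phi=\{\phi(\hat{x})\phi(\hat{y}):\hat{x}\hat{y}\in\widehat{E}_\phi\}$, so every vertex of $V_E$ lies in $\phi(V(T))$. Combined with Claim~\ref{cl:ETFacts}\ref{claim ETFacts (i) non-empty S_j}, the set $S_j$ is a non-empty subset of the image of $\phi$.

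Next, applying Claim~\ref{cl:ETFacts}\ref{claim ETFacts (iv) highly extentable} together with the contradiction hypothesis $|\Psi(T''_j;S_j)|\ge \Pow(T''_j,q)$ immediately yields that $T''_j$ is a highly-extendable subtree with respect to $S_j$. Since $S_j$ is a non-empty subset of the image of $\phi$, by the definition of $\mathcal{T}_\phi$ we conclude that $T''_j\in \mathcal{T}_\phi$.

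Now I invoke the edge set $\widehat{E}_\phi$ produced earlier in the proof of Theorem~\ref{theorem:treeBoundEG}, which by the application of Theorem~\ref{thm:Helly} to $\mathcal{T}_\phi$ (justified by Proposition~\ref{proposition:HellyTechnical}) has the property that every subtree in $\mathcal{T}_\phi$ contains at least one edge of $\widehat{E}_\phi$. In particular, $T''_j$ contains some edge $\hat{u}\hat{v}\in \widehat{E}_\phi$. Setting $u=\phi(\hat{u})$, $v=\phi(\hat{v})$, the edge $uv$ lies in $E_\phi=E$, and we have exhibited an edge $uv\in E$ with $\hat{u}\hat{v}\in E(T''_j)$, directly contradicting Claim~\ref{cl:ETFacts}\ref{claim ETFacts (iii) no edge is in E(T''_j)}. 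This contradiction completes the proof. The argument is essentially a bookkeeping step that marries the output of the Helly-type theorem to the technical definition of highly-extendable; I do not anticipate any subtle obstacle, provided the earlier claims in the list have been established as stated.
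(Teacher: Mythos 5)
Your proof is correct and follows essentially the same route as the paper's: assume some $|\Psi(T''_j;S_j)|\ge\Pow(T''_j,q)$, use Claim~\ref{cl:ETFacts}\ref{claim ETFacts (i) non-empty S_j} and \ref{claim ETFacts (iv) highly extentable} together with $E=E_\phi$ to conclude $T''_j\in\c{T}_\phi$, then use the defining property of $\widehat{E}_\phi$ (and canonicality of $\phi$) to exhibit an edge $uv\in E$ with $\hat{u}\hat{v}\in E(T''_j)$, contradicting Claim~\ref{cl:ETFacts}\ref{claim ETFacts (iii) no edge is in E(T''_j)}. The only addition you make is spelling out why $V_E$ lies in the image of $\phi$, which the paper treats as immediate from the definition of $E_\phi$.
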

		
		\begin{proof}
			If this were not the case for some $j$, then $T''_j$ would be a highly-extendable subtree with respect to $S_j$ by \Cref{cl:ETFacts}\ref{claim ETFacts (iv) highly extentable}.  Observe that having $E=E_\phi$ in particular implies (by definition of $E_\phi$) that $V_E$ is in the image of $\phi$. Since $S_j\sub V_E$ is a non-empty subset of the image of $\phi$ by \Cref{cl:ETFacts}\ref{claim ETFacts (i) non-empty S_j}, we conclude that $T''_j\in\c{T}_\phi$ by definition.  However, by definition of $E_\phi$ there must exist some edge $uv\in E_\phi$ with $\hat{u}\hat{v}\in E(T''_j)$, but this does not hold by \Cref{cl:ETFacts}\ref{claim ETFacts (iii) no edge is in E(T''_j)}.
			
		\end{proof}
		
		Let $\displaystyle\Pow=\max_{T'\sub T} \Pow(T',q)$.  We claim that the number of $\phi\in \Phi$ with $E=E_\phi$ is at most $\Pow^{v(T)}=O(1)$.  This is certainly true if no such $\phi$ exists, so we may assume that at least one such $\phi$ exists.  For such $\phi$, let $\phi_j:=\phi|_{V(T''_j)}$ for $j\ge 1$ and let $\phi_0:=\phi|_{\widehat{V}_E}$.  Observe that $\phi$ is uniquely determined by the vector $(\phi_0,\phi_1,\ldots,\phi_c)$ since every $\hat{x}\in V(T)$ is in either $\widehat{V}_E$ or one of the sets $V(T'_j)\subseteq V(T''_j)$ by definition of $T'_j$ being the connected components of $T-\widehat{V}_E$.  Also observe that $\phi_0$ is uniquely determined by $E$ since $\phi$ is canonical, and that each $\phi_j$ with $j\ge 1$ is an element of $\Psi(T''_j;S_j)$.  Thus by \Cref{cl:ETSmall}, there are at most $\Pow^{c}\le \Pow^{v(T)}$ choices of vectors $(\phi_0,\ldots,\phi_c)$, and hence at most this many choices of $\phi$, proving the claim.  It follows that $|\Phi|=O(e(G)^{k-1})$, giving the result.
	\end{proof}
	
	We now prove our man technical statement assumed in the proof of \Cref{theorem:treeBoundEG}.
	
	\begin{proof}[Proof of \Cref{proposition:HellyTechnical}]
		We begin by verifying that each element in $\c{T}_\phi$ is leaf-cuttable.  Consider some $T'\in \c{T}_\phi$, which by definition means it is a highly-extendable subtree with respect to some non-empty subset $S$ of the image of $\phi$.  
		
		First, note that if we had $e(T')<1$, then the subtree $T'$ would be a single vertex $\hat{x}$.  Because each $\psi \in \Psi(T';S)$ is in particular a map $\psi:V(T')\to V(G)$ with the non-empty set $S$ in its image, we must have $|\Psi(T';S)|\le 1$, a contradiction to condition~\ref{condition highly extendable definition (c) many canonical morphisms} of $T'$ being highly-extendable since by \Cref{lemma:sunflower}, we have $\Pow(T',q) \ge \pow\ge 2$.  We conclude that every $T'\in \c{T}_\phi$ has at least one edge.
		
		Second, by conditions~\ref{condition highly extendable definition (a) leaf of T'} and~\ref{condition highly extendable defintion (b) edges incident} for $T'$ being highly-extendable with respect to $S$, we have that every edge of $E(T)\sm E(T')$ which is incident to $T'$ is incident to a leaf of $T'$ (namely to a vertex of $\widehat{S}$).  This combined with the observation above implies that $T'$ is always a leaf-cuttable subtree. 
		
		It remains now to verify that $\c{T}_\phi$ is $k$-Helly.  To this end, we assume for contradiction that there exist some canonical homomorphism $\phi$ and $T_1',\ldots,T'_k\in \c{T}_\phi$ which are pairwise edge disjoint.  
		
		For each $j\ge 1$, let $S_j$ be a minimal (and possibly empty) set of vertices in the image of $\phi$ such that $T'_j$ is highly-extendable with respect to $S_j$, and this minimum is well-defined since having $T'_j\in \c{T}_\phi$ means $T'_j$ is highly-extendable with respect to at least one set.  For each $j\ge 1$, let $X_j\sub V(T)$ be the set of vertices $\hat{x}\in V(T)$ with the property that every edge containing $\hat{x}$ in $T$ lies in $T_j$, and also let $X_0=V(T)\sm \bigcup_{j\ge 1} X_j$.
		\begin{claim}\label{cl:XFacts}
			The following hold:
			\begin{enumerate}[label=(\roman*')]
				\item The $X_j$ sets for $j\ge 0$ partition $V(T)$.\label{cl:XFacts (i') partition}
				\item $\widehat{S}_j\sub X_0$ for all $j\ge 1$.\label{cl:XFacts (ii') hat S_j is in X_0}
				\item $X_j\cup \widehat{S}_j=V(T'_j)$ for all $j\ge 1$.\label{cl:XFacts (iii') X_j cup hat S_j is T'_j}
			\end{enumerate}
		\end{claim}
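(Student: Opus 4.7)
The plan is to verify the three statements of the claim by carefully unpacking the definitions of $X_j$, the minimality of $S_j$, and the conditions defining a highly-extendable subtree. I anticipate parts~\ref{cl:XFacts (i') partition} and~\ref{cl:XFacts (iii') X_j cup hat S_j is T'_j} to follow from routine applications of the pairwise edge-disjointness of the $T'_j$ and conditions~\ref{condition highly extendable definition (a) leaf of T'} and~\ref{condition highly extendable defintion (b) edges incident}, while~\ref{cl:XFacts (ii') hat S_j is in X_0} will be the main obstacle, requiring the minimality of $S_j$.

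For~\ref{cl:XFacts (i') partition}, the sets $X_j$ for $j\geq 1$ are disjoint from $X_0$ by definition, so it suffices to show that the $X_j$ for $j\ge 1$ are pairwise disjoint. Since $T\ne K_1$ is connected, every $\hat{x}\in V(T)$ has an incident edge; if $\hat{x}\in X_i\cap X_j$ for distinct $i,j\ge 1$, then this edge lies in both $E(T'_i)$ and $E(T'_j)$, contradicting that the $T'_j$ are pairwise edge-disjoint.

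For~\ref{cl:XFacts (iii') X_j cup hat S_j is T'_j}, first note $X_j\subseteq V(T'_j)$ because any vertex with every incident edge in $E(T'_j)$ is automatically in $V(T'_j)$, and $\widehat{S}_j\subseteq V(T'_j)$ by condition~\ref{condition highly extendable definition (a) leaf of T'}. For the reverse inclusion, I will take $\hat{x}\in V(T'_j)\setminus \widehat{S}_j$ and show every edge $\hat{x}\hat{y}\in E(T)$ lies in $E(T'_j)$. Otherwise, condition~\ref{condition highly extendable defintion (b) edges incident} would force $\hat{y}\in \widehat{S}_j\subseteq V(T'_j)$; but then both endpoints of $\hat{x}\hat{y}$ lie in the subtree $T'_j$, so combining $\hat{x}\hat{y}$ with the unique $\hat{x}\hat{y}$-path in $T'_j$ gives a cycle in the tree $T$, a contradiction. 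Hence $\hat{x}\in X_j$.

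For~\ref{cl:XFacts (ii') hat S_j is in X_0}, I need to show $\widehat{S}_j\cap X_i=\emptyset$ for every $i\ge 1$. The case $i\ne j$ is quick: any $\hat{x}\in \widehat{S}_j\cap X_i$ is a leaf of $T'_j$ with a unique neighbor $\hat{y}$ in $T'_j$, so the edge $\hat{x}\hat{y}\in E(T'_j)$ must also lie in $E(T'_i)$ (since $\hat{x}\in X_i$), contradicting pairwise edge-disjointness. The case $i=j$ is the real subtlety and is where I will invoke the minimality of $S_j$. Suppose $\hat{x}_0\in \widehat{S}_j\cap X_j$, and let $x_0\in S_j$ be the unique preimage under $u\mapsto \hat{u}$. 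Condition~\ref{condition highly extendable definition (a) leaf of T'} makes $\hat{x}_0$ a leaf of $T'_j$, while $\hat{x}_0\in X_j$ forces every edge of $T$ at $\hat{x}_0$ into $E(T'_j)$; together these imply $\hat{x}_0$ is a leaf of $T$. I will then check that $S_j\setminus\{x_0\}$ also satisfies all three conditions for $T'_j$ to be highly-extendable: condition~\ref{condition highly extendable definition (a) leaf of T'} is inherited trivially, condition~\ref{condition highly extendable definition (c) many canonical morphisms} follows from monotonicity $\Psi(T'_j;S_j)\subseteq \Psi(T'_j;S_j\setminus\{x_0\})$, and crucially condition~\ref{condition highly extendable defintion (b) edges incident} survives because $\hat{x}_0$ being a leaf of $T$ means no edge of $E(T)\setminus E(T'_j)$ was incident to $\hat{x}_0$ to begin with, so removing it from $\widehat{S}_j$ does not uncover any edge that was previously only covered by $\hat{x}_0$. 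This contradicts the minimality of $S_j$.
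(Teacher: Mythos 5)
Your proof is correct and follows essentially the same approach as the paper: parts~\ref{cl:XFacts (i') partition} and~\ref{cl:XFacts (iii') X_j cup hat S_j is T'_j} are derived from the pairwise edge-disjointness of the $T'_j$ together with conditions~\ref{condition highly extendable definition (a) leaf of T'} and~\ref{condition highly extendable defintion (b) edges incident}, and part~\ref{cl:XFacts (ii') hat S_j is in X_0} hinges on the minimality of $S_j$ exactly as in the paper. Your explicit cycle-closing step in~\ref{cl:XFacts (iii') X_j cup hat S_j is T'_j} and the observation that $\hat{x}_0$ is a leaf of $T$ in~\ref{cl:XFacts (ii') hat S_j is in X_0} simply make explicit details that the paper leaves to the reader.
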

		
		\begin{proof}
			For~\ref{cl:XFacts (i') partition}, the fact that the union of these sets equals $V(T)$ follows from the definition of $X_0$, which also implies $X_0$ is disjoint from every $X_j$.  If there existed some $i\ne j$ with $i,j\ge 1$ and a vertex $\hat{x}\in X_i\cap X_j$, then consider any edge of the form $\hat{x}\hat{y}\in E(T)$ (and such an edge always exists since $T\ne K_1$).  By definition of $\hat{x}\in X_i\cap X_j$ with $i,j\ge 1$, we must have that $\hat{x}\hat{y}$ is in both $T'_i$ and $T'_j$, a contradiction to our assumption that these trees are edge disjoint.  We conclude that these sets partition $V(T)$.
			
			For~\ref{cl:XFacts (ii') hat S_j is in X_0}, each $u\in S_j$ must have $\hat{u}$ contained in an edge of $T'_j$ by condition~\ref{condition highly extendable definition (a) leaf of T'} of $T'_j$ being highly-extendable with respect to $S_j$, so $\hat{u}\notin X_i$ for any $i\ne 0,j$.  If $\hat{u}\in X_j$, then we claim that $T'_j$ would be highly-extendable with respect to $S_j\sm \{u\}$.  Indeed, condition~\ref{condition highly extendable definition (a) leaf of T'} trivially holds since it held for $S_j$,~\ref{condition highly extendable defintion (b) edges incident} continues to hold since $\hat{u}\in X_j$ is assumed to only be in edges of $T'_j$, and~\ref{condition highly extendable definition (c) many canonical morphisms} holds since $\Psi(T'_j;S_j\sm \{u\})\supseteq \Psi(T'_j;S)$.  However, $T'_j$ can not be highly-extendable with respect to $S_j\sm \{u\}$ since we chose $S_j$ to be a minimal set with the property, a contradiction.  We conclude that $\hat{u}\notin X_j$, so we must have $\hat{u}\in X_0$ by~\ref{cl:XFacts (i') partition}.
			
			For~\ref{cl:XFacts (iii') X_j cup hat S_j is T'_j}, the inclusion $X_j\cup \widehat{S}_j\sub V(T'_j)$ follows from the fact that each vertex in $X_j$ is incident to at least one edge of $T'_j$ by definition, with this also holding for $\widehat{S}_j$ by condition~\ref{condition highly extendable definition (a) leaf of T'} of $T'_j$ being highly-extendable with respect to $S_j$.  On the other hand, every vertex $\hat{x}\in V(T'_j)\sm \widehat{S}_j$ can only be incident to edges of $T'_j$ by condition~\ref{condition highly extendable defintion (b) edges incident} of $T'_j$ being highly-extendable with respect to $S_j$, implying $\hat{x}\in X_j$ as desired.
		\end{proof}
		
		Define the map $\psi_0:X_0\to V(G)$ by having $\psi_0(\hat{x})=\phi(\hat{x})$ for all $\hat{x}\in X_0$, noting that this map is a canonical monomorphism of $T[X_0]$ into $G$ since $\phi$ is a canonical monomorphism of $T$.  We say that a tuple of maps $M=(\psi_0,\psi_1,\ldots,\psi_k)$ is \textbf{gluable} if $\psi_j\in \Psi(T'_j;S_j)$ for all $j\ge 1$.  We define the \textbf{amalgamation} $\psi$ of a gluable tuple of maps $M$ to be the map $\psi:V(T)\to V(G)$ with $\psi(\hat{x})=\psi_j(\hat{x})$ whenever $\hat{x}\in X_j$.  Note that this map is well defined since the $X_j$ sets partition $V(T)$ by \Cref{cl:XFacts}\ref{cl:XFacts (i') partition}. Furthermore, for $\hat{x}\in \widehat{S}_j\subseteq X_0$, we have that $\psi_j(\hat{x})=\phi(\hat{x})=\psi(\hat{x})$ since $\psi_j$ and $\phi$ are canonical and both have $S_j$ in their image.
		
		\begin{claim}\label{cl:amalag}
			The amalgamation $\psi$ of any gluable tuple of maps $M$ is a canonical monomorphism from $T$ to $G$.
		\end{claim}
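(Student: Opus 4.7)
The plan is to verify the three defining properties of a canonical monomorphism in turn: (i) that $\psi$ is canonical, (ii) that $\psi$ is injective, and (iii) that $\psi$ preserves edges. Properties (i) and (ii) will be essentially immediate, while (iii) requires a careful case analysis driven by \Cref{cl:XFacts}.

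For (i), each component map $\psi_j$ with $j \ge 1$ is an element of $\Psi(T'_j; S_j)$ and so is canonical on $V(T'_j)$ by definition, and $\psi_0$ is canonical since it is the restriction of the canonical map $\phi$. Because each $\hat{x} \in V(T)$ lies in exactly one $X_j$ by \Cref{cl:XFacts}\ref{cl:XFacts (i') partition}, $\psi$ inherits canonicality from its pieces. Property (ii) then follows for free: since the parts $V_{\hat{x}}$ are pairwise disjoint, any canonical map on $V(T)$ is automatically injective.

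For (iii), let $\hat{x}\hat{y} \in E(T)$ and split into cases by the membership of $\hat{x}, \hat{y}$ in the $X_j$. If both endpoints lie in $X_0$, then the edge is preserved because $\psi$ agrees with $\phi$ on $X_0$ and $\phi$ is a monomorphism. If both endpoints lie in a common $X_j$ with $j \ge 1$, then by definition of $X_j$ the edge $\hat{x}\hat{y}$ lies in $E(T'_j)$, and $\psi_j$ preserves it. It is impossible to have $\hat{x} \in X_i$ and $\hat{y} \in X_j$ for distinct $i, j \ge 1$, since then $\hat{x}\hat{y}$ would simultaneously lie in $E(T'_i)$ and $E(T'_j)$, contradicting pairwise edge-disjointness. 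The one remaining case, which will be the main technical step, is $\hat{x} \in X_0$ and $\hat{y} \in X_j$ for some $j \ge 1$. Here $\hat{x}\hat{y} \in E(T'_j)$, so in particular $\hat{x} \in V(T'_j)$; combining with $\hat{x} \notin X_j$ and \Cref{cl:XFacts}\ref{cl:XFacts (iii') X_j cup hat S_j is T'_j} forces $\hat{x} \in \widehat{S}_j$. Since $S_j$ lies in the image of $\phi$ and both $\phi$ and $\psi_j$ are canonical with $S_j$ in their images, they must agree on $\widehat{S}_j$, so $\psi(\hat{x}) = \phi(\hat{x}) = \psi_j(\hat{x})$. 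The edge is then preserved by $\psi_j$, as needed.

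The main obstacle is really only the mixed $X_0$/$X_j$ case above; everything else is bookkeeping. The crucial ingredient there is the observation, already provided by \Cref{cl:XFacts}, that the ``boundary'' $V(T'_j) \cap X_0$ coincides with $\widehat{S}_j$ — precisely the set on which the canonical maps $\phi$ and $\psi_j$ are forced to coincide. This is what allows the locally-defined pieces to glue together into a globally coherent monomorphism, and completes the proof.
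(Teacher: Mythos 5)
Your proof is correct and takes essentially the same approach as the paper: canonicality and injectivity come directly from each $\psi_j$ being canonical, and the only nontrivial case for edge preservation is an edge straddling $X_0$ and $X_j$ ($j\ge 1$), which is handled by using \Cref{cl:XFacts}\ref{cl:XFacts (iii') X_j cup hat S_j is T'_j} to place the $X_0$-endpoint in $\widehat{S}_j$ and then observing that $\phi$ and $\psi_j$ agree there. The only cosmetic difference is that you explicitly separate out the impossible $X_i$/$X_j$ ($i,j\ge 1$ distinct) case, while the paper handles all $i\ne j$ at once and derives $\hat{x}\in\widehat{S}_j\sub X_0$ as a consequence.
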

		\begin{proof}
			The fact that $\psi$ is canonical (and thus injective) follows from the fact that each $\psi_j$ is canonical by construction.  Similarly, if $\hat{x}\hat{y}\in E(T)$ is such that $\hat{x},\hat{y}\in X_j$ for some $j$, then $\psi$ will map $\hat{x}\hat{y}$ to an edge of $G$ since $\psi_j$ is a homomorphism.  Thus to verify that $\psi$ map edges of $T$ to edges of $G$, we need only consider the case that $\hat{x}\hat{y}\in E(T)$ with $\hat{x}\in X_i$ and $\hat{y}\in X_j$ for some $i\ne j$, and without loss of generality we may assume $j\ne 0$.  Because $\hat{y}\in X_j$ with $j\ne 0$, every edge incident to $\hat{y}$ in $T$ must lie in $T'_j$, so in particular $\hat{x}\in V(T'_j)$.  Since $\hat{x}\notin X_j$ by assumption, we have by \Cref{cl:XFacts}\ref{cl:XFacts (iii') X_j cup hat S_j is T'_j} and~\ref{cl:XFacts (ii') hat S_j is in X_0} that $\hat{x}\in V(T'_j)\sm X_j=\widehat{S}_j$. Recall that for $\hat{x}\in \widehat{S}_j$, we have $\psi(\hat{x})=\psi_j(\hat{x})$. Because $\psi_j$ is a homomorphism, we conclude that $\psi(\hat{x})\psi(\hat{y})=\psi_j(\hat{x})\psi_j(\hat{y})$ is an edge of $G$, proving that $\psi$ is indeed a homomorphism.
		\end{proof}
		
		Observe that each $\psi_j\in \Psi(T'_j;S_j)$ is a monomorphism of $T'_j$ and that $|\Psi(T'_j;S_j)|\ge \Pow(T'_j,q)$ since $T'_j$ is highly-extendable with respect to $S_j$.  Thus by \Cref{lemma:sunflower}, for each $j\ge 1$ there exists a set $R_j\subsetneq V(T'_j)$ and distinct maps $\psi_j^{(1)},\ldots,\psi_j^{(\pow)}\in \Psi(T'_j;S_j)$ such that each pair of maps $\psi_j^{(i)},\psi_j^{(i')}$ with $i\ne i'$ has $\psi_j^{(i)}(\hat{x})=\psi_j^{(i')}(\hat{x})$ if and only if $\hat{x}\in R_j$.  Note that we necessarily have $\widehat{S}_j\sub R_j$ for all $j$ since every map in $\Psi(T'_j;S_j)$ agrees on $\widehat{S}_j$.  For each $1\le i\le \pow$, let $\psi^{(i)}$ be the amalgamation of $M^{(i)}:=\{\psi_0,\psi_1^{(i)},\ldots,\psi_k^{(i)}\}$. 
		\begin{claim}\label{claim amalgamation}
			The following holds:
			\begin{enumerate}[label=(\roman*'')]
				\item Each $\psi^{(i)}$ is a monomorphism from $T$ to $G$ for all $1\le i\le \pow$.\label{claim amalgamation (i'') monomorphism}
				\item For all $i\ne i'$ and $\hat{x},\hat{y}\in V(T)$, we have $\psi^{(i)}(\hat{x})=\psi^{(i')}(\hat{y})$ if and only if both $\hat{x}=\hat{y}$ and $\hat{x}\in R:=X_0\cup \bigcup_{j\ge 1} R_j$.\label{claim amalgamation (ii'') well-defined}
				\item $T-R$ has at least $k$ connected components.\label{claim amalgamation (iii'') T-R has k components}
			\end{enumerate}
		\end{claim}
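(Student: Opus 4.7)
The plan is to handle the three parts in order, with the majority of the work going into part (iii'').

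Part (i'') is essentially immediate: by construction each $\psi_j^{(i)}$ was chosen from $\Psi(T'_j; S_j)$, so the tuple $M^{(i)}$ is gluable, and \Cref{cl:amalag} then gives that $\psi^{(i)}$ is a canonical monomorphism from $T$ to $G$.

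For part (ii''), since every $\psi^{(i)}$ is canonical, an equality $\psi^{(i)}(\hat{x}) = \psi^{(i')}(\hat{y})$ places the common value in $V_{\hat{x}} \cap V_{\hat{y}}$, which forces $\hat{x} = \hat{y}$. I then reduce to analyzing when $\psi^{(i)}(\hat{x}) = \psi^{(i')}(\hat{x})$ by splitting along the $X_j$ partition of \Cref{cl:XFacts}: if $\hat{x} \in X_0$, both values equal $\psi_0(\hat{x})$ and agree, and $X_0 \subseteq R$; if $\hat{x} \in X_j$ with $j \geq 1$, both values equal $\psi_j^{(i)}(\hat{x})$ and $\psi_j^{(i')}(\hat{x})$ respectively, which by the sunflower property coincide exactly when $\hat{x} \in R_j$. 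It remains only to verify that $X_j \cap R = X_j \cap R_j$, which follows from $X_j$ being disjoint from $X_0$ and from every other $R_l \subseteq V(T'_l) \subseteq X_l \cup X_0$.

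For part (iii''), the strategy is to exhibit one distinct connected component of $T - R$ from inside each $V(T'_j)$, using pairwise disjointness of $X_1, \ldots, X_k$ to distinguish them. The same bookkeeping as above shows $R \cap V(T'_j) = R_j$: we have $X_0 \cap V(T'_j) = \widehat{S}_j \subseteq R_j$, and $R_l \cap V(T'_j) = \emptyset$ for $l \neq j$ because $R_l \subseteq X_l \cup X_0$ is disjoint from $X_j$ and the relevant overlap inside $\widehat{S}_j$ is already in $R_j$. Since $R_j \subsetneq V(T'_j)$ by the sunflower construction, the set $V(T'_j) \setminus R = V(T'_j) \setminus R_j$ is non-empty and, because $\widehat{S}_j \subseteq R_j$, lies entirely inside $X_j$. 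For any such $\hat{x} \in X_j$, every edge of $T$ incident to $\hat{x}$ lies in $T'_j$, so the connected component of $\hat{x}$ in $T - R$ remains inside $V(T'_j) \setminus R \subseteq X_j$. The $k$ resulting components are pairwise disjoint since the sets $X_1, \ldots, X_k$ are, yielding at least $k$ components of $T - R$.

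The only subtlety requiring care is nailing down $R \cap V(T'_j) = R_j$, since $R$ mixes contributions from $X_0$ and from all the $R_l$'s simultaneously. Once that computation is in hand, both (ii'') and (iii'') reduce to straightforward applications of \Cref{cl:XFacts} together with the sunflower structure of the maps $\psi_j^{(i)}$.
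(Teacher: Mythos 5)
Your proof is correct. Parts (i'') and (ii'') follow the paper essentially verbatim, invoking \Cref{cl:amalag} and then splitting along the $X_j$-partition together with the sunflower structure of the $\psi_j^{(i)}$'s. For part (iii''), your argument is organized a bit differently from the paper's: the paper picks representatives $\hat{x}_j\in V(T'_j)\setminus R_j$ and shows the unique $T$-path between any two of them must pass through a vertex of $R$ (by locating the first index along the path that exits $X_j$ and showing it lands in $\widehat{S}_j\subseteq R$), whereas you first establish $R\cap V(T'_j)=R_j$, deduce $\hat{x}_j\in V(T'_j)\setminus R\subseteq X_j$, and then show the entire connected component of $\hat{x}_j$ in $T-R$ is confined to $X_j$, so that distinct representatives lie in distinct components by pairwise disjointness of the $X_j$'s. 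The two arguments rest on the same structural facts from \Cref{cl:XFacts} and are logically equivalent; yours has the small advantage of making explicit the check that $\hat{x}_j\notin R$ (needed for $\hat{x}_j$ to even be a vertex of $T-R$), which the paper's write-up leaves implicit. One notational slip to flag: strictly speaking $R_l\cap V(T'_j)$ need not be empty for $l\neq j$ (it can meet $\widehat{S}_j$), but as your parenthetical correctly observes, any such intersection is already contained in $\widehat{S}_j\subseteq R_j$, so the conclusion $R\cap V(T'_j)=R_j$ holds regardless.
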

		\begin{proof}
			\ref{claim amalgamation (i'') monomorphism} follows from \Cref{cl:amalag}.
			
			For the reverse direction of~\ref{claim amalgamation (ii'') well-defined}, if $\hat{x}=\hat{y}$ lies in some $R_j\sub X_j\cup \widehat{S}_j$, then by definition of the amalgamation and $R_j$,  \[\psi^{(i)}(\hat{x})=\psi_j^{(i)}(\hat{x})=\psi_j^{(i')}(\hat{x})=\psi^{(i')}(\hat{x}),\] with a similar result holding if $\hat{x}=\hat{y}\in X_0$.  
			
			Now, for the forward direction of~\ref{claim amalgamation (ii'') well-defined}, assume $\hat{x},\hat{y}$ satisfy $\psi^{(i)}(\hat{x})=\psi^{(i')}(\hat{y})$.  Because amalgamations are canonical, we have $\psi^{(i)}(\hat{x})\in V_{\hat{x}}$ and $\psi^{(i)}(\hat{y})\in V_{\hat{y}}$, and since $V_{\hat{x}},V_{\hat{y}}$ are disjoint for $\hat{x}\ne \hat{y}$, we must have $\hat{x}=\hat{y}$. If $\hat{x}\notin X_0$, then by \Cref{cl:XFacts}\ref{cl:XFacts (i') partition}, we have $\hat{x}\in X_j$ for some $j\ge 1$. Then $\psi^{(i)}(\hat{x})=\psi^{(i')}(\hat{x})$ implies $\psi^{(i)}_j(\hat{x})=\psi^{(i')}_j(\hat{x})$, meaning $\hat{x}\in R_j$ by definition of $R_j$.  We conclude that $\hat{x}\in X_0\cup \bigcup_{j\ge 1} R_j=R$, proving the result.
			
			For~\ref{claim amalgamation (iii'') T-R has k components}, let $\hat{x}_j$ be an arbitrary vertex in $V(T'_j)\sm R_j$ for each $j\ge 1$, which exists since $R_j$ is a proper subset of $V(T'_j)$.  We aim to show that each $\hat{x}_j$ lies in a different connected component of $T-R$, from which the result will follow.  
			
			Indeed, consider the path $\hat{z}_1\cdots \hat{z}_a$ in $T$ from some $\hat{x}_j$ to some other $\hat{x}_{j'}$ with $j\ne j'$.  Note that $\hat{z}_1=\hat{x}_j$ lies in $V(T'_j)\sm \widehat{S}_j$ since $\widehat{S}_j\sub R_j$, which means $\hat{z}_1\in X_j$ by \Cref{cl:XFacts}\ref{cl:XFacts (iii') X_j cup hat S_j is T'_j}.  Similarly $\hat{z}_a\in X_{j'}$, and since $X_j,X_{j'}$ are disjoint, there must exist some smallest integer $b>1$ such that $\hat{z}_b\notin X_j$. Because $\hat{z}_{b-1}\in X_j$ by the minimality of $b$, and because $\hat{z}_{b-1}\hat{z}_b$ is an edge of $T$, we have by definition of $\hat{z}_{b-1}\in X_j$ that $\hat{z}_{b-1}\hat{z}_b$ must be an edge in $T'_j$.  Since $\hat{z}_b\notin X_j$, we must have \[\hat{z}_b\in V(T'_j)\sm X_j= \widehat{S}_j\sub R_j\sub R.\]  We conclude  that the paths in $T$ between the $\hat{x}_j$ vertices always contains at least one vertex in $R$, implying that each of these vertices lies in a different connected component of $T-R$, proving the result.
		\end{proof}
		In total, this last claim implies that $G$ contains a copy of some $T_R^\pow\in \c{F}_{T,k}^\pow$, contradicting $G$ being $\c{F}_{T,k}^\pow$-free.  We conclude that no set of edge disjoint $T_1',\ldots,T_k'$ exist in $\c{T}_\phi$, proving the result.
	\end{proof}

	\section{Concluding Remarks}\label{section concluding remarks}
	As things stand, it is not clear what the set of realizable exponents for an arbitrary graph should be.  At present, our only guess for what might be the right answer in general is the following, where here we recall that $\c{F}_{H,k}^\pow$ is the family introduced in \Cref{definition flower powers}.
	\begin{question}\label{question:obviousObstruction}
		Is it the case that for all graphs $H$ and integers $k\in [1,v(H)]$, a rational number $\re\in (k-1,k)$ is realizable for $H$ if and only if $\ex(n,H,\c{F}_{H,k}^\pow)=\Om(n^\re)$ for some sufficiently large $\pow$?
	\end{question}
	Recall that in \Cref{corollary:keyObservation} we showed that if $\ex(n,H,\c{F}_{H,k}^\pow)=o_\pow(n^\re)$ then $\re$ is not realizable, so \Cref{question:obviousObstruction} in essence asks if the converse of this statement is true.  We believe we can prove that \Cref{question:obviousObstruction} has a positive answer for all $H$ on at most 4 vertices, but other than this we have little reason to believe the answer to \Cref{question:obviousObstruction} is yes in general.  Still, this question raises a number of followup questions that may be easier to handle, such as the following.
	
	\begin{question}\label{question:independenceNumber}
		Is it the case that for every graph $H$, every rational in $[\al(H),v(H)]$ is realizable, where here $\al(H)$ denotes the independence number of $H$?
	\end{question}
	A positive answer to \Cref{question:obviousObstruction} would imply a positive answer to \Cref{question:independenceNumber}, as we showed in \Cref{lemma alpha empty} that $\c{F}_{H,k}^\pow=\emptyset$ whenever $k> \al(H)$.  We also note that the bounds of this question exactly matches the cutoff for realizable exponents of both stars and cliques.
	
	While we are unwilling to go so far as to call \Cref{question:independenceNumber} a conjecture, we do believe a slight weakening of this should be true.
	
	\begin{conjecture}\label{conjecture:maxDegree}
		If $H$ is a graph of maximum degree $\Del(H)$, then every rational in the interval
		\[
		\left[v(H)-\frac{e(H)}{\Del(H)},v(H)\right]
		\]
		is realizable for $H$.
	\end{conjecture}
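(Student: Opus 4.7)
The plan is to strengthen \Cref{lemma arbitrary edge admissible} so that its hypothesis weakens from $d\ge \max\{2k^2,\Delta(H)\}$ to simply $d\ge \Delta(H)$. Combined with the inductive argument of \Cref{thm:degenerate} and \Cref{observation admissible implies realizable}, such an improvement immediately yields \Cref{conjecture:maxDegree}, since every graph is trivially $\Delta(H)$-edge-degenerate.

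The strategy is to retain the two-round framework of \Cref{lemma arbitrary edge admissible} --- lay a spine of copies of $H$ along an edge $\hat x\hat y$ on top of a Bukh--Conlon tree from \Cref{lem:BukhConlon}, then add further copies to fine-tune spine-vertex degrees --- but with a leaner construction. First, I would build $F'$ using only \emph{one} copy of $H$ per spine edge, so each spine vertex starts with degree at most $2(\Delta-1)+2$ rather than $2k^2$. Then, in place of the pre-reserved pool $\c{H}_j$ of size $k-1$, I would use ``paired'' round-two moves: each time we want to raise $\deg_F(x_j)$ by one, we \emph{simultaneously} add a fresh copy $H^{\mathrm{new}}$ sharing exactly the edge $\hat x\hat y$ with the spine and a near-duplicate of $H^{\mathrm{new}}$ that agrees with $H^{\mathrm{new}}$ everywhere except at the vertex playing the role of $\hat y$. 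The first copy contributes an $H-\hat x-\hat y$ component to $F[R]$ and the second replaces it with an $H-\hat x$ component; both lie in $2^H\setminus \{H\}$, and crucially the consumed copy is never revisited, so no extra reserve is ever needed.

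The main obstacle lies in the delicate balanced-rooted-graph accounting that this leaner scheme must satisfy. Each paired move contributes a fixed increment to both $e(F)$ and $v(F)-|R|$, so the ratio $e(F)/(v(F)-|R|)$ evolves along a prescribed trajectory; forcing it to land on exactly the target rational $d=b/a$ while simultaneously hitting the target degree profile $(\deg_T(g(u)))_{u\in U}$ is not automatic. I expect this will require a careful choice of the underlying balanced tree from \Cref{lem:BukhConlon} --- in particular one whose spine length and degree sequence have the right arithmetic compatibility with $\Delta(H)$ and $e(H)$ --- and may additionally require allowing a few round-one spine copies to share more than just the edge $\hat x\hat y$ with their neighbors when $H$ has a dense induced subgraph passing through $\hat x\hat y$. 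If these compatibility issues prove too restrictive for some values of $d$ at the endpoint $d=\Delta(H)$, a density/limiting argument over longer spines --- using that the realizable set is closed under taking limits of rationals in the sense of \Cref{observation admissible implies realizable} applied to $d+\varepsilon$ for $\varepsilon\to 0$ --- should recover the full closed interval.
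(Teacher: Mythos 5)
This statement is labeled \textbf{Conjecture} in the paper, and the paper offers no proof of it. The authors explicitly present it as open, noting only that it follows for trees (\Cref{theorem:maxDegreeTrees}) and that \Cref{theorem:maxDegreeGeneral} gives an approximate version. They even remark in Section~\ref{section concluding remarks} that while \Cref{theorem d large tree implies dadmissible} ``may be sufficient to prove \Cref{conjecture:maxDegree},'' they cannot confirm this. Your proposal is thus an attempted resolution of an open problem rather than a reconstruction of an argument in the paper, and it contains several concrete gaps.

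The most serious problem is in the degree accounting of the ``leaner'' round-one construction. If you use only one copy of $H$ per spine edge, then a spine vertex $y_i$ (shared between $H_{i-1}$ and $H_i$, which intersect only in $y_i$) has $\deg_{F'}(y_i) = 2\deg_H(\hat y)$, which can be as large as $2\Delta$. Since condition \ref{property general graph proof degree of spine} requires $\deg_{F'}(u) \le \deg_T(g(u))$, and the Bukh--Conlon tree from \Cref{lem:BukhConlon} with rooted density $d = \Delta$ may have all unrooted vertices of degree exactly $\Delta$, the starting configuration already violates the degree bound when $d = \Delta$ (and indeed for any $d < 2\Delta - 1$). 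This is precisely why the paper's \Cref{lemma arbitrary edge admissible} assumes $d \ge 2k^2$, where $k$ bounds $\deg_H(\hat x), \deg_H(\hat y)$: the $2k^2$ threshold is not an artifact of the $k$-copy pool $\mathcal{H}_j$ but a genuine cost of the overlap structure. The only way the paper gets down to $d = \Delta$ is in \Cref{lemma cut edge dadmissible}, where $\hat x\hat y$ is a cut edge and consecutive spine copies share an entire component $H^{\hat x}$, so the spine-vertex degree is only $\Delta + 1$ (or $\Delta$ when $a = 2$). Without a cut edge, that large overlap would force a component of $F[R]$ that is not an induced subgraph of $H$, breaking \ref{property general graph proof connected components}. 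You gesture at allowing more overlap ``when $H$ has a dense induced subgraph passing through $\hat x\hat y$,'' but you do not explain how to keep the root components in $2^H \setminus \{H\}$ when doing so.

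The paired round-two move is also miscounted. Adding a fresh $H^{\mathrm{new}}$ sharing exactly the edge $x_j y_{j+1}$ increases $\deg_F(x_j)$ by $\deg_H(\hat x) - 1$, not by zero; the near-duplicate then adds one more. So the total increment to $\deg_F(x_j)$ per paired move is $\deg_H(\hat x)$, and it simultaneously increases $\deg_F(y_{j+1})$ by $\deg_H(\hat y) - 1$. These coupled, lumpy increments make it far from clear that one can land on the exact target degree profile $(\deg_T(g(u)))_{u \in U}$, and you acknowledge this but do not resolve it.

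Finally, the proposed fallback --- a ``density/limiting argument over longer spines'' invoking that the realizable set is ``closed under taking limits of rationals'' --- has no support in the paper or in the definition of realizability. \Cref{observation admissible implies realizable} produces realizability of the single rational $v(H)-e(H)/d$ from $d$-admissibility; there is no limiting or closure principle that transfers realizability of $v(H)-e(H)/(d+\varepsilon)$ for all small $\varepsilon > 0$ to realizability at $d$. In fact the paper's discussion of $K_{1,1,2}$ shows that realizability can require constructions that go beyond the random-polynomial / admissibility machinery entirely (``trimming'' a random polynomial graph), which is a cautionary note against expecting a purely admissibility-based proof to cover all of \Cref{conjecture:maxDegree} without new ideas.
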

	
	A positive answer to \Cref{question:independenceNumber} would suffice to prove \Cref{conjecture:maxDegree}, as it is easy to show\footnote{This is because the complement of a (maximum) independent set $S$ must be incident to all $e(H)$ edges of $H$, but also any set of size $v(H)-\al(H)$ is trivially incident to at most $(v(H)-\al(H))\Del(H)$ edges of $H$.} that every graph satisfies $\al(H)\le v(H)-e(H)/\Del(H)$.  This conjecture holds for trees due to \Cref{theorem:maxDegreeTrees}, with \Cref{theorem:maxDegreeGeneral} giving an approximate version of this conjecture for all graphs.
	
	As far as we know, our general result \Cref{theorem d large tree implies dadmissible} may be sufficient to prove \Cref{conjecture:maxDegree} for all $H$, in which case it would prove the stronger fact that every graph is $d$-admissible for all $d\ge \Del$.  However, we emphasize that while $H$ being $d$-admissible implies $v(H)-e(H)/d$ is a realizable exponent, the converse of this statement is (perhaps surprisingly) not true in general.  That is, there are cases where a direct application of the random polynomial method from \Cref{theorem random polynomial lower bounds} can not be used to prove the realizability of exponents. 
	
	As an informal example of this phenomenon, consider the problem of showing that rationals in the range $(1.5,2)$ are realizable for the complete tripartite graph $H=K_{1,1,2}$.  Here the  issue  is that in order to have $\ex(n,H,\c{F})=o(n^2)$, the family $\c{F}$ must contain (a subgraph of) some $K_{1,1,t}$, as otherwise $G=K_{1,1,n-2}$ would be $\c{F}$-free and give $\Om(n^2)$ copies of $H$.  However, $K_{1,1,t}$ can be viewed as the rooted power of $(K_3,\{x,y\})$ with $xy$ an edge of the triangle, which has rooted density 2.  Thus when applying \Cref{theorem random polynomial lower bounds} to avoid a family containing large powers of $(K_3,\{x,y\})$, the number of copies of $K_{1,1,2}$ that one finds is at most on the order of $n^{4-5/2}=n^{1.5}$.  As such, a direct application of random polynomials can not be used to prove the existence of realizable exponents in $(1.5,2)$.  Nevertheless, it is in fact possible to show that every rational in $(1.5,2)$ is realizable for $H$.  We plan to do this in forthcoming work by starting with a random polynomial graph and then ``trimming'' it in some appropriate way.
	
	For our final question, we recall that \Cref{corollary trees vs forests} showed that $\ex(n,T,\c{F})=\Theta(n^k)$ for some integer $k$ whenever $T$ is a tree and $\c{F}$ is a forest.  We wonder if this same result might continue to hold for other choices of $H$.
	
	\begin{question}
		Is it true that for any graph $H$ and any finite family $\c{F}$ containing a forest that either $\ex(n,H,\c{F})=\Theta(n^k)$ for some integer $k$ or $\ex(n,H,\c{F})=0$ for all sufficiently large $n$?
	\end{question}
	Perhaps the most natural approach towards proving this question for a given $H$ would be to extend either \Cref{theorem weak tree reduction} or \Cref{theorem:treeReduction} to $H$. For example, if $k=\alpha(H)+1$ then \Cref{lemma alpha empty} gives that $\c{F}_{H,k}^q=\emptyset$, so the natural analog of \Cref{theorem:treeReduction} would suggest that for any family $\c{F}$ we have
	\[
	\ex(n,H,\c{F})=O(\ex(n,\c{F})^{\alpha(H)}).
	\]
	This in fact holds if $H$ is a bipartite graph without isolated vertices, since in this case K\"onig's Theorem implies that the vertices of $H$ can be covered by $\alpha(H)$ edges.  On the other hand, this inequality is false for graphs $H$ with $\alpha(H)<v(H)/2$ simply by considering $\c{F}=\emptyset$.  Because of this observation, one might hope that \Cref{theorem:treeReduction} could continue to hold with $T$ replaced by any bipartite graph $H$ without isolated vertices, but this is false for $H=C_4$ and $k=2$.  Indeed, in this case the natural generalization of \Cref{theorem:treeReduction} would essentially state that every family $\c{F}$ which contains $K_{2,q}$ satisfies $\ex(n,C_4,\c{F})=O(\ex(n,\c{F}))$.  However, \Cref{theorem random polynomial lower bounds} shows that for $\c{F}=\{K_{2,q}\}$ we have $\ex(n,C_4,\c{F})=\Omega(n^2)$ which is much larger than $\ex(n,\c{F})=O(n^{3/2})$.   
	
	From here one might hope that even if \Cref{theorem:treeReduction} is false for $C_4$ that perhaps \Cref{theorem weak tree reduction} still holds.  This again turns out to be false for $k=2$ but requires a bit more work to show, the details of which we briefly sketch below.
	
	\begin{proposition}
		There exists a family $\c{F}$ such that neither $\ex(n,C_4,\c{F})=\Om(n^2)$ nor the bound $\ex(n,C_4,\c{F})=O(\ex(n,\c{F}))$ holds.  In particular, the statement of \Cref{theorem weak tree reduction} does not hold for $C_4$ and $k=2$.
	\end{proposition}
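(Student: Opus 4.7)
The plan is to combine a Bukh--Conlon family (producing a sub-extremal edge count) with a single $K_{2,q}$ (to kill the $\Om(n^2)$ alternative). Fix a rational $r \in (4/3,3/2)$, for concreteness $r=7/5$. By \Cref{theorem bukh conlon} there is a finite family $\c{F}_0$ with $\ex(n,\c{F}_0)=\Theta(n^r)$; its extremal graph $G^*$ comes from a random polynomial construction and therefore has maximum codegree bounded by some constant $q$. I would set $\c{F} := \c{F}_0 \cup \{K_{2,q+1}\}$. The equality $\ex(n,\c{F})=\Theta(n^r)$ is then immediate: the upper bound is inherited from $\c{F}_0$, and the lower bound holds because $G^*$ is both $\c{F}_0$-free and $K_{2,q+1}$-free, hence $\c{F}$-free.

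To refute the ``$\ex(n,C_4,\c{F})=O(\ex(n,\c{F}))$'' alternative, I would count $C_4$s in $G^*$ directly. Since $G^*$ is essentially regular with each vertex of degree $\Theta(n^{r-1})$, the number of length-two paths in $G^*$ is $\Theta(n^{2r-1})$, and the mean codegree over the $\binom{n}{2}$ vertex pairs is $\mu = \Theta(n^{2r-3})$. The random polynomial structure makes the codegree distribution ``Poisson-like'', so the number of pairs with codegree at least $2$ is $\Theta(n^2\mu^2) = \Theta(n^{4r-4})$, each contributing at least one copy of $C_4$. This gives $\ex(n,C_4,\c{F}) \ge \mon(C_4,G^*) = \Om(n^{4r-4})$, and since $4r-4 > r$ whenever $r > 4/3$ (indeed $8/5 > 7/5$), we conclude $\ex(n,C_4,\c{F})/\ex(n,\c{F}) \to \infty$.

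The harder step, and the main obstacle, is establishing $\ex(n,C_4,\c{F}) = o(n^2)$. The $K_{2,q+1}$-freeness yields $\mon(C_4,G) \le \binom{q}{2} \cdot |\{(u,v):c(u,v)\ge 2\}|$, and combining this with $e(G) = O(n^r)$ together with a codegree Cauchy--Schwarz calculation should yield $\mon(C_4,G) = O(n^{2r-1}) = o(n^2)$. The difficulty is that a vertex of very high degree attached to a small dense piece could a priori violate the degree-regularity needed for Cauchy--Schwarz, so I would split the vertex set dyadically by degree, bounding the contribution of the high-degree vertices by averaging against $e(G) = O(n^r)$ and treating the bounded-degree part separately. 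If this bookkeeping proves too delicate, I would enlarge $\c{F}$ by further rooted-power graphs chosen to forbid the unbalanced configurations while leaving $G^*$ intact, so that the earlier steps go through unchanged.
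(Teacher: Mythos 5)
Your overall strategy matches the paper's: choose $\c{F}$ so that $\ex(n,\c{F})$ is sub-quadratic yet $\ex(n,C_4,\c{F})$ is polynomially larger, while a $K_{2,q}$-type member of $\c{F}$ forces $\ex(n,C_4,\c{F})=o(n^2)$. The upper-bound sketch (degree split plus Cauchy--Schwarz, with the high-degree vertices controlled by the total edge count) is also in the same spirit as the paper's two-level split into $S$ (degree $\le n^{1-1/d+\varepsilon/2}$) and $L$, and could be pushed through.

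However, there is a genuine gap in your lower bound for $\ex(n,C_4,\c{F})$. You appeal to two unproven structural claims about the Bukh--Conlon extremal graph $G^*$: (1) that it has bounded maximum codegree (needed to assert $G^*$ is $K_{2,q+1}$-free, i.e.\ that the lower bound for $\ex(n,\c{F})$ survives adding $K_{2,q+1}$ to $\c{F}$), and (2) that its codegree distribution is ``Poisson-like'' so that the number of pairs with codegree at least $2$ is $\Theta(n^2\mu^2)$. Neither claim follows from \Cref{theorem bukh conlon} as a black box; the first requires a careful analysis of the algebraic construction, and the second is a heuristic second-moment statement that does not automatically hold. The paper avoids both issues by applying \Cref{theorem random polynomial lower bounds} (Proposition 2.1 of \cite{S24}) \emph{directly} to $H=C_4$: it takes $\c{F}$ to consist of $K_{2,q}$ together with $q$-th powers of balanced rooted trees of rooted density $d\in(3/2,2)$, observes that $K_{2,q}$ is the $q$-th power of the rooted path $(P_3,\{x,z\})$ of rooted density $2\ge d$, and concludes at once that $\ex(n,C_4,\c{F})=\Omega(n^{4-4/d})$, which dominates $\ex(n,\c{F})=O(n^{2-1/d})$ whenever $d>3/2$. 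This replaces your two heuristic claims with a single citation of an existing lower-bound theorem, and is the cleaner route you should take.
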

	\begin{proof}[Sketch of Proof]
		Take $\c{F}$ to consist of $K_{2,q}$ together with the powers of a rooted tree $(T,R)$ of rooted density $d$ with $3/2<d<2$.  Note that we have $\ex(n,\c{F})=O(\ex(n,(T,R)^q))=O(n^{2-1/d})$ while \Cref{theorem random polynomial lower bounds} implies $\ex(n,C_4,\c{F})=\Omega(n^{4-4/d})$, and this is much larger than $\ex(n,\c{F})$ for $d>3/2$.  We claim now that $\ex(n,C_4,\c{F})=O(n^{2-\varepsilon})$ for some sufficiently small $\varepsilon$.  To this end, say we have an $n$-vertex $\c{F}$-free graph $G$ and let $S$ be its vertices of degree at most $n^{1-1/d+\varepsilon/2}$ and $L$ the rest of the vertices.  Because $G$ is $K_{2,q}$-free the number of copies of $C_4$ using a vertex of $S$ is at most $n^{1+2(1-1/d+\varepsilon/2)}\le n^{2-\varepsilon}$ provided $1-2/d+\varepsilon\le -\varepsilon$, which holds for $\varepsilon$ sufficiently small provided $d<2$.  On the other hand, we must have $|L|=O(n^{1-\varepsilon/2})$ since we know $e(G)\le \ex(n,\c{F})=O(n^{2-1/d})$, so again being $K_{2,q}$-free implies that the number of copies of $C_4$ using only vertices in $L$ is at most $q|L|^2=O(n^{2-\varepsilon})$ as desired.
	\end{proof}

	\subsection{Acknowledgments}
	We thank Robert A. Krueger for helpful conversations and D\'aniel Gerbner for pointing us to useful references.  The second author is supported by a National Science Foundation Mathematical Sciences Postdoctoral Research Fellowship under Grant No. DMS-2202730.
	
	\bibliographystyle{amsplain}
	\bibliography{bib}{}
	
\end{document}